\newcounter{minutes}\setcounter{minutes}{\time}
\newcounter{hours}\setcounter{hours}{\time}
\title{Hyperbolic Lambert Quadrilaterals and Quasiconformal Mappings}
\author{Matti Vuorinen}
\author{Gendi Wang}
\address{Department of Mathematics and Statistics, University of Turku, Turku 20014,
Finland} \email{vuorinen@utu.fi}
\address{Department of Mathematics and Statistics, University of Turku, Turku 20014,
Finland} \email{genwan@utu.fi}
\newtheorem{theorem}[equation]{Theorem}
\newtheorem{lemma}[equation]{Lemma}
\newtheorem{proposition}[equation]{Proposition}
\newtheorem{corollary}[equation]{Corollary}
\newtheorem{remark}[equation]{Remark}
\newcommand{\beq}{\begin{equation}}
\newcommand{\eeq}{\end{equation}}
\newcommand{\BB}{\mathbb{B}^2}
\newcommand{\RR}{\mathbb{R}^2}
\newcommand{\UH}{\mathbb{H}^2}
\newcommand{\HH}{{\mathbb{H}}}
\newcommand{\R}{\mathbb{R}}
\newcommand{\Rn}{\mathbb{R}^n}
\newcommand{\arth}{{\rm arth}}
\newcommand{\SArc}{{\rm SArc}}
\newcommand{\Arc}{{\rm Arc}}
\numberwithin{equation}{section}
\begin{document}








\def\thefootnote{}
\footnotetext{ \texttt{\tiny File:~\jobname .tex,
          printed: \number\year-\number\month-\number\day,
          \thehours.\ifnum\theminutes<10{0}\fi\theminutes}
} \makeatletter\def\thefootnote{\@arabic\c@footnote}\makeatother

\maketitle

\begin{abstract}
We prove sharp bounds for the product and the sum of two hyperbolic distances
between the opposite sides of hyperbolic Lambert quadrilaterals in the unit disk.
Furthermore, we study the images of Lambert quadrilaterals under quasiconformal
mappings from the unit disk onto itself and obtain sharp results in this case, too.
\end{abstract}

{\small \sc Keywords.} { Hyperbolic quadrilateral, quasiconformal mapping}

{\small \sc 2010 Mathematics Subject Classification.} {51M09(51M15)}


\section{Introduction}

Given a pair of points in the closure of the unit disk ${\mathbb B}^2\,,$ there exists a unique hyperbolic geodesic line joining these two points.
Hyperbolic lines are simply sets of the form $C \cap \BB$, where $C$ is a circle perpendicular to the unit circle, or a Euclidean diameter of
$\BB\,.$ For a quadruple of four points
$\{a,b,c,d\}$ in the closure of the unit disk, we can draw these hyperbolic lines
joining each of the four pairs of points $\{a,b\},$ $\{b,c\},$ $\{c,d\},$ and $\{d,a\}\,.$ If these hyperbolic lines bound a domain $D \subset {\mathbb B}^2$  such that the points $\{a,b,c,d\}$ are in the positive order on the boundary of the domain, then we say
that the quadruple of points $\{a,b,c,d\}$ determines a hyperbolic quadrilateral $Q(a,b,c,d)\,$ and that the points $a,b,c,d$ are its vertices.
A hyperbolic quadrilateral with angles equal to $\pi/2, \pi/2, \pi/2, \phi\,(0\leq \phi < \pi/2)\,,$ is
called a hyperbolic {\em Lambert} quadrilateral \cite[p. 156]{be}, see Figure 1. Observe that one of the vertices of a Lambert quadrilateral may be on the unit circle, in which case the angle at that vertex is $\phi=0\,.$

In this paper, we study bounds for the product and the sum of
two hyperbolic distances between the opposite sides of hyperbolic Lambert quadrilaterals in the unit disk. Also, we consider the same product expression for the images of these hyperbolic Lambert quadrilaterals under quasiconformal mappings from the unit disk onto itself. In particular, we obtain similar results for ideal hyperbolic quadrilaterals, i.e., in the case when all the vertices are on the unit circle and all the angles are zero.
This follows, because an ideal hyperbolic quadrilateral can be subdivided
into four Lambert quadrilaterals.

For the formulation of our main results we introduce some notation --
further notation will be given below in Section 2.
Let $J^*[a,b]$ be the hyperbolic geodesic line with end points $a\,,b\in\partial\BB,$
and let $J[a,b]$ be the hyperbolic geodesic segment joining $a$ and $b$ when $a,b \in {\mathbb B}^2\,,$ or the hyperbolic geodesic ray when one of the two points $a\,,b$ is on $\partial\BB$.

Given two nonempty subsets $A, B$ of $ {\BB}$ (or of the upper
 half plane $ \UH\,$),
let $d_\rho(A,B)$ denote the hyperbolic distance between them,  defined as
 $$d_\rho(A,B)=\inf_{{x\in A\atop y\in B}}\rho(x,y)\,,$$
where $\rho(x,y)$ stands for the hyperbolic distance \eqref{th} (or \eqref{cosh} in the case $ A, B\subset\UH$).

We now formulate our main results.

\begin{theorem}\label{Lamdd}
Let $Q (v_a\,,v_b\,,v_c\,,v_d)$ be a hyperbolic Lambert quadrilateral in $\BB$ and let the quadruple of interior angles
$(\frac{\pi}{2}\,,\frac{\pi}{2}\,,\phi\,,\frac{\pi}{2})$, $\phi\in[0, \pi/2)\,,$ correspond to the quadruple $(v_a\,,v_b\,,v_c\,,v_d)$ of vertices. Let
$d_1=d_\rho(J[v_a,v_d]\,,J[v_b,v_c])\,,$  $d_2=d_\rho(J[v_a,v_b]\,,J[v_c,v_d])$ (see Figure \ref{Lamb}), and
let $L={\rm th} \rho(v_a,v_c)\in(0,1]$.
Then
\begin{eqnarray*}
d_1d_2\leq \left(\arth\left(\frac{\sqrt 2}{2}L\right)\right)^2 \,.
\end{eqnarray*}
The equality holds if and only if $v_c$ is on the bisector of the interior angle at $v_a$.
\end{theorem}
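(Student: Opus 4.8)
The plan is to reduce the theorem to a one–variable extremal problem and then analyze the resulting function of an angle.

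\medskip

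\noindent First I would show that $d_1=\rho(v_a,v_b)$ and $d_2=\rho(v_a,v_d)$. Indeed, the side $J[v_a,v_b]$ is perpendicular to $J[v_a,v_d]$ at $v_a$ and to $J[v_b,v_c]$ at $v_b$, so the geodesic line through $v_a$ and $v_b$ is the common perpendicular of the geodesic lines carrying $J[v_a,v_d]$ and $J[v_b,v_c]$; this common perpendicular realizes the hyperbolic distance between those two lines, and since its endpoints $v_a,v_b$ already lie on $J[v_a,v_d]$ and $J[v_b,v_c]$ respectively, we get $d_1=\rho(v_a,v_b)$, and symmetrically $d_2=\rho(v_a,v_d)$. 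Next, cutting $Q$ along the diagonal $J[v_a,v_c]$ produces two hyperbolic triangles, with right angles at $v_b$ and at $v_d$; writing $\alpha=\angle v_bv_av_c\in(0,\pi/2)$, the angle at $v_a$ in the second triangle is $\pi/2-\alpha$, and $\phi$ is the sum of the two angles at $v_c$ (so $\phi\in[0,\pi/2)$ automatically). By hyperbolic trigonometry (the relation between an angle, the adjacent leg and the hypotenuse in a right triangle), with $L={\rm th}\,\rho(v_a,v_c)$ one has ${\rm th}\,d_1=L\cos\alpha$ and ${\rm th}\,d_2=L\sin\alpha$, hence
\[
d_1d_2=f(\alpha):=\arth(L\cos\alpha)\,\arth(L\sin\alpha).
\]
The degenerate case $\phi=0$, $v_c\in\partial\BB$, $L=1$, fits the same formula, the angle of parallelism playing the role of the right-triangle relation.

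\medskip

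\noindent It then remains to prove that $f$ on $(0,\pi/2)$ attains its maximum only at $\alpha=\pi/4$ (which is precisely the condition that $J[v_a,v_c]$ bisects the right angle at $v_a$), the value there being $\big(\arth(L/\sqrt2)\big)^2=\big(\arth(\tfrac{\sqrt2}{2}L)\big)^2$. The function $f$ is positive and continuous on $(0,\pi/2)$, vanishes at both endpoints, and is symmetric about $\pi/4$. A direct differentiation shows that $(\log f)'(\alpha)$ has the same sign as $g(L\cos\alpha)-g(L\sin\alpha)$, where $g(t):=t(1-t^2)\arth t$ for $t\in(0,1)$. So the whole matter comes down to showing $g(L\cos\alpha)>g(L\sin\alpha)$ for every $\alpha\in(0,\pi/4)$: once this is known, $(\log f)'>0$ on $(0,\pi/4)$, by symmetry $(\log f)'<0$ on $(\pi/4,\pi/2)$, so $f$ increases strictly on $(0,\pi/4]$ and decreases strictly on $[\pi/4,\pi/2)$, which yields both the bound and the equality case.

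\medskip

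\noindent The main obstacle is that $g$ is \emph{not} monotone on $(0,1)$ (it vanishes at both ends), so one cannot simply invoke $L\cos\alpha>L\sin\alpha$; the point is that $L\le1$ confines $L\cos\alpha$ and $L\sin\alpha$ to a range where the non-monotonicity does no harm. Concretely I would establish two elementary facts. First, $g$ is strictly increasing on $(0,1/\sqrt2]$: writing $g'(t)=t-(3t^2-1)\arth t$, this is clear for $t\le1/\sqrt3$, and for $t\in(1/\sqrt3,1/\sqrt2]$ one has $0<3t^2-1\le\tfrac12$, so $g'(t)\ge t-\tfrac12\arth t$, which is positive because $t-\tfrac12\arth t$ vanishes at $t=0$ and is increasing on $(0,1/\sqrt2]$ (its derivative is $1-\tfrac1{2(1-t^2)}$, nonnegative there). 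Second, if $u>w>0$ with $u^2+w^2=1$ then
\[
g(u)-g(w)=uw^2\arth u-wu^2\arth w=u^2w^2\Big(\frac{\arth u}{u}-\frac{\arth w}{w}\Big)>0,
\]
since $\arth t/t=\sum_{k\ge0}t^{2k}/(2k+1)$ is increasing. With these in hand, for $\alpha\in(0,\pi/4)$ put $u=L\cos\alpha>v=L\sin\alpha$, $u^2+v^2=L^2\le1$: if $u\le1/\sqrt2$, then $v<u\le1/\sqrt2$ and the first fact gives $g(v)<g(u)$; if $u>1/\sqrt2$, then $v^2\le1-u^2<1/2$, so $v<\sqrt{1-u^2}<1/\sqrt2$, whence the first fact gives $g(v)\le g(\sqrt{1-u^2})$ and the second (with $w=\sqrt{1-u^2}<1/\sqrt2<u$) gives $g(\sqrt{1-u^2})<g(u)$. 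Either way $g(L\cos\alpha)>g(L\sin\alpha)$, as required. I expect this last step — checking that the side constraint $L\le1$ tames the non-monotonicity of $g$ — to be where the real content lies; the rest is bookkeeping.
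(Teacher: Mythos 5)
Your proof is correct, but it takes a genuinely different route from the paper in both halves of the argument. For the geometric reduction, the paper normalizes $v_a=0$, $v_b\in X$, $v_d\in Y$, $v_c=te^{i\theta}$, computes the orthogonal circle through $v_c,\overline{v_c}$ explicitly via \eqref{orar}, and invokes Proposition \ref{a} to get $d_1=\arth(L\cos\theta)$, $d_2=\arth(L\sin\theta)$; you instead use the common-perpendicular property of the sides and the right-triangle relation ${\rm th}\,d_1={\rm th}\,\rho(v_a,v_c)\cos\alpha$ (with the angle of parallelism covering $\phi=0$), arriving at the same parametrization $d_1d_2=\arth(Lr)\arth(Lr')$ more synthetically. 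For the extremal problem, the paper applies its Lemma \ref{lecr}(2), whose proof rests on the monotonicity of $f_c(r)=\frac{1-(cr')^2}{r\arth(cr)}$ obtained via the monotone l'H\^opital rule; you instead show $(\log f)'$ has the sign of $g(L\cos\alpha)-g(L\sin\alpha)$ with $g(t)=t(1-t^2)\arth t$, and handle the non-monotonicity of $g$ by the two elementary facts (monotonicity of $g$ on $(0,1/\sqrt2]$, and the complementary-pair identity $g(u)-g(w)=u^2w^2\bigl(\frac{\arth u}{u}-\frac{\arth w}{w}\bigr)$ when $u^2+w^2=1$). Both computations check out, and your argument also yields the equality case and, as a by-product, Corollary 1.3 (${\rm th}^2 d_1+{\rm th}^2 d_2=L^2$). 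What the paper's route buys is reusable machinery: Lemma \ref{lecr}(1) is needed again in Theorem \ref{thqcLam} and Corollary \ref{thqc}, and Proposition \ref{a} in Corollary \ref{dd}; what yours buys is a shorter, self-contained and more elementary treatment of this one theorem. Two small points to polish: justify or cite the standard facts you use (the common perpendicular realizes the distance between ultraparallel geodesics, and $\cos A={\rm th}\,b/{\rm th}\,c$, both in Beardon), and in your case $u>1/\sqrt2$ note that when $L=1$ one has $v=\sqrt{1-u^2}$ rather than $v<\sqrt{1-u^2}$; your chain $g(v)\leq g(\sqrt{1-u^2})<g(u)$ already covers this, so only the wording needs adjusting.
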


\medskip
\begin{figure}[h]
\centering
\includegraphics[width=7cm]{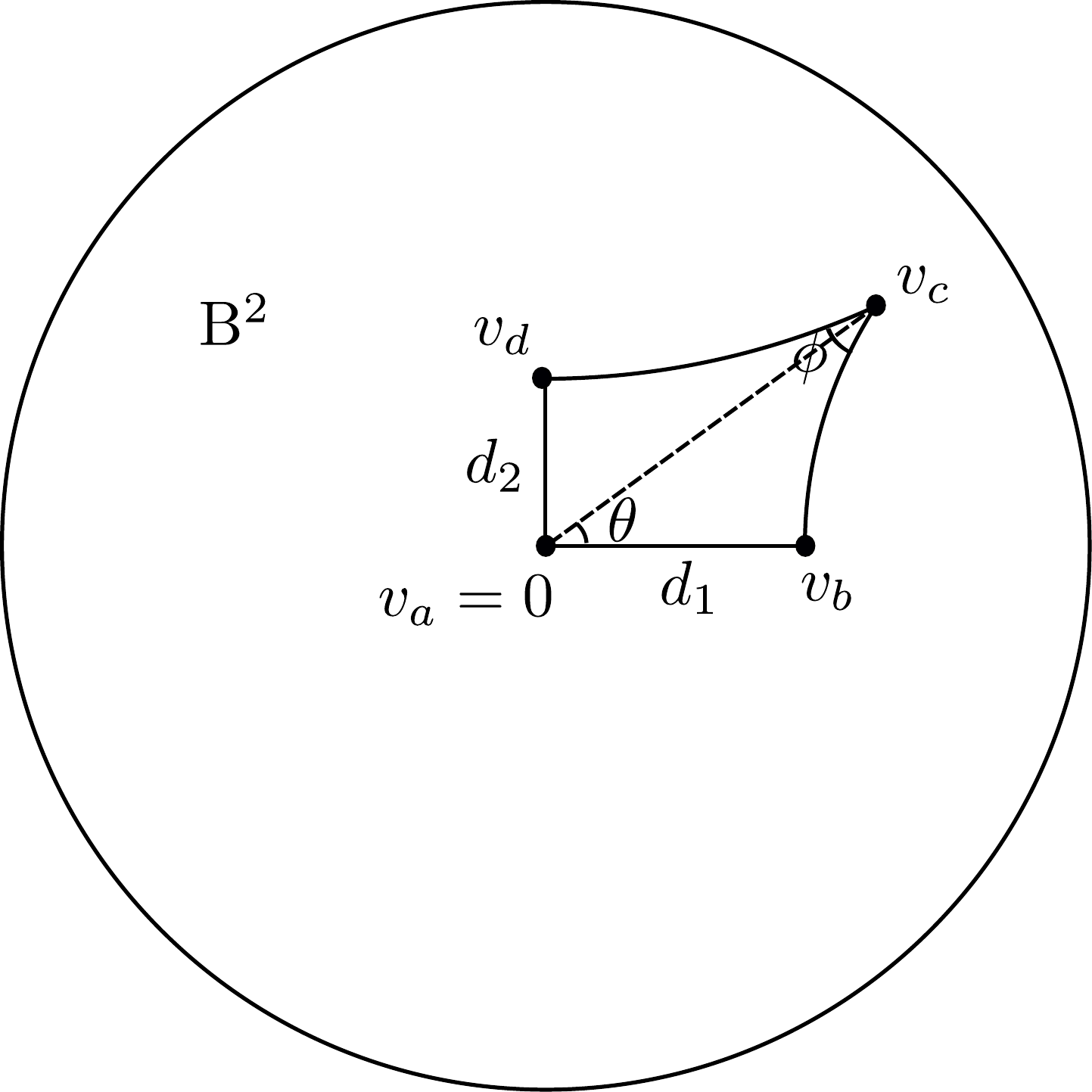}
\caption{\label{Lamb} A hyperbolic Lambert quadrilateral in $\BB$.}
\end{figure}
\medskip

\begin{theorem}\label{Lamdad}
Let $Q(v_a\,,v_b\,,v_c\,,v_d)$\,, $d_1$, $d_2$ and $L$ be as in Theorem \ref{Lamdd}.
Let $m= \sqrt{(2-L^2)(3L^2-2)}$, $r_0=\sqrt{\frac{1-m/L^2}{2}}$ and $r'_0=\sqrt{1-r^2_0}$.

(1) If $0<L\leq\sqrt{\frac 23}$, then
\begin{eqnarray*}
\arth\,L<d_1+d_2\leq \arth\left(\frac{2\sqrt 2L}{2+L^2}\right).
\end{eqnarray*}
The equality holds in the right-hand side if and only if $v_c$ is on the bisector of the interior angle at $v_a$.

(2) If $\sqrt{\frac 23}<L<\sqrt{2(\sqrt 2-1)}$, then
\begin{eqnarray*}
\arth\,L<d_1+d_2\leq \arth\left(\frac{L(r_0+r'_0)}{1+L^2r_0r'_0}\right).
\end{eqnarray*}
The equality holds in the right-hand side if and only if the interior angle between $J[v_a,v_b]$ and $J[v_a,v_c]$ is $\arccos r_0$ or $\arccos r'_0$.

(3) If $\sqrt{2(\sqrt 2-1)}\leq L<1$, then
\begin{eqnarray*}
\arth\left(\frac{2\sqrt 2L}{2+L^2}\right)\leq d_1+d_2\leq \arth\left(\frac{L(r_0+r'_0)}{1+L^2r_0r'_0}\right).
\end{eqnarray*}
The equality holds in the left-hand side if and only if $v_c$ is on the bisector of the interior angle at $v_a$.
The equality holds in the right-hand side if and only if the interior angle between $J[v_a,v_b]$ and $J[v_a,v_c]$ is $\arccos r_0$ or $\arccos r'_0$.

(4) If $L=1$, then
\begin{eqnarray*}
d_1+d_2\geq \arth\left(\frac{2\sqrt 2}{3}\right)\,.
\end{eqnarray*}
The equality holds if and only if $v_c$ is on the bisector of the interior angle at $v_a$.
\end{theorem}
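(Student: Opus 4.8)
The plan is to reduce the estimate to a one–variable extremal problem, via the parametrization that already underlies Theorem~\ref{Lamdd}, and then to carry out the optimization by elementary calculus, with a case split dictated by where the unique interior critical point lies.

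First I would set up the parametrization. The diagonal $J[v_a,v_c]$ splits $Q$ into two right-angled hyperbolic triangles, $\triangle v_av_bv_c$ with the right angle at $v_b$ and $\triangle v_av_cv_d$ with the right angle at $v_d$, and the angle $\pi/2$ at $v_a$ is divided as $\alpha+(\pi/2-\alpha)$ where $\alpha=\angle(J[v_a,v_b],J[v_a,v_c])$. Because $Q$ has right angles at $v_a$ and $v_b$, the segment $J[v_a,v_b]$ meets the geodesics carrying $J[v_a,v_d]$ and $J[v_b,v_c]$ orthogonally; a segment orthogonal to two geodesics exists only if those geodesics are ultraparallel, in which case it is their unique common perpendicular, realizes $d_\rho$, and has its endpoints on the two sides. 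Hence $d_1=\rho(v_a,v_b)$, and likewise $d_2=\rho(v_a,v_d)$. Applying the right-triangle relation ${\rm th}(\text{adjacent leg})={\rm th}(\text{hypotenuse})\cos(\text{included angle})$ in each triangle, together with ${\rm th}\,\rho(v_a,v_c)=L$, gives
\[
{\rm th}\,d_1=L\cos\alpha,\qquad {\rm th}\,d_2=L\sin\alpha,
\]
and one checks that every $\alpha\in(0,\pi/2)$ comes from an admissible Lambert quadrilateral (the angle at $v_c$ is automatically in $[0,\pi/2)$, being the sum of the two non-right angles of the triangles), while $\alpha\to0^+$ and $\alpha\to(\pi/2)^-$ degenerate it.

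Next, using the addition formula $\arth u+\arth v=\arth\frac{u+v}{1+uv}$ (legitimate since $u,v\in[0,1)$) and the substitution $s=\cos\alpha+\sin\alpha$, so that $\sin\alpha\cos\alpha=(s^2-1)/2$, one gets
\[
d_1+d_2=\arth\bigl(\psi(s)\bigr),\qquad \psi(s)=\frac{2Ls}{L^2s^2+(2-L^2)},\qquad s\in(1,\sqrt2].
\]
Since $\arth$ is increasing, the task is to find the extrema of $\psi$ on $(1,\sqrt2]$. The derivative $\psi'(s)$ has the sign of $2-L^2-L^2s^2$, so $\psi$ has the single critical point $s_*=\sqrt{2-L^2}/L$, and $s_*\in(1,\sqrt2]$ precisely when $\sqrt{2/3}\le L<1$. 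Thus: if $L\le\sqrt{2/3}$, then $\psi$ is increasing, with maximum $\psi(\sqrt2)=\frac{2\sqrt2L}{2+L^2}$ (at $\alpha=\pi/4$) and infimum $\psi(1)=L$ not attained, giving part~(1). If $\sqrt{2/3}\le L<1$, the maximum is $\psi(s_*)=1/\sqrt{2-L^2}$; writing this as $\frac{L(r_0+r'_0)}{1+L^2r_0r'_0}$ forces $r_0+r'_0=s_*$ and $r_0r'_0=(1-L^2)/L^2$, so $r_0,r'_0$ are the roots of $z^2-s_*z+\frac{1-L^2}{L^2}=0$, whose discriminant $(3L^2-2)/L^2$ is $\ge0$ exactly for $L\ge\sqrt{2/3}$; squaring the roots yields $r_0^2=\frac12(1-m/L^2)$ with $m=\sqrt{(2-L^2)(3L^2-2)}$, i.e. the stated $r_0,r'_0$. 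With $s_*$ interior, the minimum of $\psi$ sits at an endpoint: either $\psi(1)=L$ (not attained) or $\psi(\sqrt2)=\frac{2\sqrt2L}{2+L^2}$ (attained at $\alpha=\pi/4$), and $\psi(\sqrt2)<\psi(1)\iff L>\sqrt{2(\sqrt2-1)}$; this produces parts~(2) and~(3). Finally $L=1$ is the limiting case $s_*=1$: $\psi$ is strictly decreasing on $(1,\sqrt2]$, attaining its minimum $\tfrac{2\sqrt2}{3}$ at $\alpha=\pi/4$ and being unbounded above, which is part~(4). The equality claims follow on translating back: $s=\sqrt2\iff\alpha=\pi/4\iff v_c$ lies on the bisector of the angle at $v_a$, and $s=s_*\iff\cos\alpha\in\{r_0,r'_0\}$.

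The main obstacle is bookkeeping rather than any single deep step: one must (i) verify that the critical value $1/\sqrt{2-L^2}$ coincides with the explicit expression in $r_0,r'_0$ and solve out $r_0,r'_0$ correctly, and (ii) track, for each $L$, which endpoint of $(1,\sqrt2]$ gives the infimum of $\psi$ and whether that infimum is attained, so that the three threshold values $L=\sqrt{2/3}$, $L=\sqrt{2(\sqrt2-1)}$, $L=1$ fall into the correct clause with the correct strict-versus-nonstrict inequality. A secondary point deserving care is confirming that $\alpha$ really sweeps the whole open interval $(0,\pi/2)$ — that the constraint $0\le\phi<\pi/2$ imposes no further restriction — since the sharpness assertions depend on it.
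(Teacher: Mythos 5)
Your proposal is correct and follows essentially the same route as the paper: both reduce the problem to the parametrization ${\rm th}\,d_1=L\cos\alpha$, ${\rm th}\,d_2=L\sin\alpha$ (the paper via explicit orthogonal-circle coordinates and Proposition 3.2, you via the common-perpendicular observation and right-triangle trigonometry) and then determine the range of $\arth(Lr)+\arth(Lr')$ in one variable, which is exactly the content of the paper's Lemma 2.3, where $g_c(r)={\rm th}\,G_c(r)=\frac{c(r+r')}{1+c^2rr'}$ is analyzed just as your $\psi(s)$ with $s=r+r'$. Your identification of the critical value $1/\sqrt{2-L^2}$ with the stated expression in $r_0,r'_0$ and your endpoint/threshold bookkeeping match the paper's case analysis, so the argument is sound.
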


In a Lambert quadrilateral, the angle $\phi$ is related to the lengths
$d_1$, $d_2$ of the sides "opposite" to it as follows
\cite[Theorem 7.17.1]{be}:
$${\rm sh}\, d_1{\rm sh}\, d_2=\cos\phi.$$
See also the recent paper of A. F. Beardon and D. Minda \cite[Lemma 5]{bm}.
The proof of Theorem \ref{Lamdd} yields the following corollary, which provides a connection between $d_1$, $d_2$ and $L={\rm th} \rho(v_a,v_c)$.

\begin{corollary} Let $L$, $d_1$ and $d_2$ be as in Theorem \ref{Lamdd}. Then
\begin{eqnarray*}
{\rm th}^2\,d_1+{\rm th}^2\,d_2=L^2.
\end{eqnarray*}
\end{corollary}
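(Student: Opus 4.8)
The plan is to reduce the identity to two applications of hyperbolic right-triangle trigonometry. First I would identify $d_1$ and $d_2$ as distances between opposite vertices. Since $Q(v_a,v_b,v_c,v_d)$ has right angles at $v_a$, $v_b$ and $v_d$, the side $J[v_a,v_b]$ is perpendicular to $J[v_a,v_d]$ at $v_a$ and to $J[v_b,v_c]$ at $v_b$; hence it lies on the common perpendicular of the hyperbolic lines carrying $J[v_a,v_d]$ and $J[v_b,v_c]$, and the infimum defining $d_1$ is attained at its endpoints. Thus $d_1=\rho(v_a,v_b)$, and symmetrically $J[v_a,v_d]$ realizes the distance between $J[v_a,v_b]$ and $J[v_c,v_d]$, so $d_2=\rho(v_a,v_d)$.

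Next I would draw the diagonal $J[v_a,v_c]$, splitting $Q$ into the right triangle $v_av_bv_c$ (right angle at $v_b$) and the right triangle $v_av_cv_d$ (right angle at $v_d$), both having $J[v_a,v_c]$ as hypotenuse of length $\rho(v_a,v_c)$ with ${\rm th}\,\rho(v_a,v_c)=L$. Let $\theta_1=\angle v_bv_av_c$ and $\theta_2=\angle v_cv_av_d$; the right angle at $v_a$ gives $\theta_1+\theta_2=\pi/2$. The standard right-triangle relation ${\rm th}(\text{leg at an acute vertex})={\rm th}(\text{hypotenuse})\cos(\text{angle at that vertex})$ (see \cite{be}) gives ${\rm th}\,d_1={\rm th}\,\rho(v_a,v_b)=L\cos\theta_1$ and ${\rm th}\,d_2={\rm th}\,\rho(v_a,v_d)=L\cos\theta_2=L\sin\theta_1$. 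Squaring and adding yields ${\rm th}^2 d_1+{\rm th}^2 d_2=L^2(\cos^2\theta_1+\sin^2\theta_1)=L^2$.

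Two points need care rather than effort. In the degenerate case $L=1$ (that is, $\phi=0$ and $v_c\in\partial\BB$) the two triangles acquire an ideal vertex at $v_c$, and the displayed formula must be read as the angle-of-parallelism relation ${\rm th}(\text{finite leg})=\cos(\text{angle at the finite acute vertex})$, which is exactly the limiting case and keeps the computation intact. Also, the relations ${\rm th}\,d_1=L\cos\theta_1$ and ${\rm th}\,d_2=L\sin\theta_1$ are precisely those produced in the proof of Theorem~\ref{Lamdd} (where the bound $d_1d_2\le(\arth(\tfrac{\sqrt2}{2}L))^2$ then follows from $\cos\theta_1\sin\theta_1\le\tfrac12$), so no new machinery is needed. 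I expect the only mildly delicate step to be the justification that $d_1=\rho(v_a,v_b)$ and $d_2=\rho(v_a,v_d)$ via the common-perpendicular argument; everything afterward is a two-line trigonometric identity.
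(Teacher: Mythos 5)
Your proof is correct, but it reaches the key relations by a different (synthetic) route than the paper. The paper's own proof is a one-line byproduct of the proof of Theorem \ref{Lamdd}: after the M\"obius normalization $v_a=0$, $v_b$ on the real axis, $v_d$ on the imaginary axis, $v_c=te^{i\theta}$, the explicit computation with the orthogonal circle \eqref{orar} and Proposition \ref{a} gives $d_1=\arth(L\cos\theta)$ and $d_2=\arth(L\sin\theta)$, so ${\rm th}^2 d_1+{\rm th}^2 d_2=L^2(\cos^2\theta+\sin^2\theta)=L^2$. You instead obtain the same two relations coordinate-free: the common-perpendicular argument identifies $d_1=\rho(v_a,v_b)$ and $d_2=\rho(v_a,v_d)$ (this is exactly what Proposition \ref{a} supplies in the paper after normalization), and then Beardon's right-triangle formula $\cos A={\rm th}\,b/{\rm th}\,c$ applied to the two right triangles cut off by the diagonal $J[v_a,v_c]$, together with $\theta_1+\theta_2=\pi/2$, gives ${\rm th}\,d_1=L\cos\theta_1$, ${\rm th}\,d_2=L\sin\theta_1$; the case $L=1$ is correctly absorbed via the angle of parallelism. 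What your version buys is transparency -- the corollary appears as a hyperbolic Pythagoras-type identity for Lambert quadrilaterals, independent of the coordinate computation; what the paper's version buys is brevity, since the needed formulas are already in hand from Theorem \ref{Lamdd}. One slip of phrasing only: in your first sentence you call $d_1,d_2$ ``distances between opposite vertices,'' whereas they are the lengths of the sides $J[v_a,v_b]$ and $J[v_a,v_d]$ (adjacent vertices), which is what your argument actually uses.
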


By Theorem \ref{Lamdd} and Theorem \ref{Lamdad}, we obtain the following corollary which deals with the ideal hyperbolic quadrilaterals.

\begin{corollary}\label{dd}
Let $Q(a,b,c,d)$ be an ideal hyperbolic quadrilateral in $\BB$. Let $d_1=d_\rho(J^*[a,d],J^*[b,c])$ and
$d_2=d_\rho(J^*[a,b], J^*[c,d])$ (see Figure \ref{thdd}).
Then
\begin{eqnarray*}
d_1d_2\leq \left(2\log(\sqrt{2}+1)\right)^2
\end{eqnarray*}
and
\begin{eqnarray*}
d_1+d_2\geq 4\log(\sqrt{2}+1)\,.
\end{eqnarray*}
In both cases the equalities hold if and only if $|a,b,c,d|=2$.
\end{corollary}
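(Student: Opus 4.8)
\emph{Proof proposal.}
The plan is to cut the ideal quadrilateral into four Lambert quadrilaterals, each of which has $L=1$, and then add up the one-quadrilateral estimates of Theorems \ref{Lamdd} and \ref{Lamdad}. First I would set up the subdivision. The opposite sides $J^*[a,d]$ and $J^*[b,c]$ are disjoint geodesics, so they have a unique common perpendicular geodesic $\gamma_1$; denote its feet by $p_1\in J^*[a,d]$ and $p_3\in J^*[b,c]$, so that $d_1=\rho(p_1,p_3)$. Likewise let $\gamma_2$ be the common perpendicular of $J^*[a,b]$ and $J^*[c,d]$, with feet $p_2\in J^*[a,b]$ and $p_4\in J^*[c,d]$, so that $d_2=\rho(p_2,p_4)$. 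Normalising the four ideal vertices in $\UH$ to $0,x,1,\infty$ with $0<x<1$, a direct computation identifies $\gamma_1$ and $\gamma_2$ as the half-circles of radii $\sqrt{x}$ and $\sqrt{1-x}$ centred at $0$ and $1$; since the distance between these centres is $1$ and $(\sqrt{x}\,)^2+(\sqrt{1-x}\,)^2=1$, the two geodesics meet, and meet orthogonally, at a point $O$ in the interior of $Q(a,b,c,d)$. Put $t_1=\rho(O,p_1)$, $t_2=\rho(O,p_2)$, $t_3=\rho(O,p_3)$, $t_4=\rho(O,p_4)$; then $d_1=t_1+t_3$ and $d_2=t_2+t_4$, and $\gamma_1\cup\gamma_2$ splits $Q(a,b,c,d)$ into four quadrilaterals, one attached to each ideal vertex.

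The quadrilateral attached to $a$ has vertices $O,p_1,a,p_2$, with right angles at $p_1$, $O$ and $p_2$ and an ideal vertex at $a$; hence it is a Lambert quadrilateral with $\phi=0$, and in the notation of Theorem \ref{Lamdd} its vertex $v_c$ is the ideal vertex $a$ while $v_a=O$, so that $L={\rm th}\,\rho(O,a)={\rm th}\,\infty=1$ and its two ``opposite-side'' distances are the two sides issuing from $O$, namely $t_1$ and $t_2$. Cyclically, the four sub-quadrilaterals have the pairs $(t_1,t_2)$, $(t_2,t_3)$, $(t_3,t_4)$, $(t_4,t_1)$ as their opposite-side distances, each with $L=1$. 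Now Theorem \ref{Lamdd} with $L=1$ gives $t_1t_2\le\kappa^2$, $t_2t_3\le\kappa^2$, $t_3t_4\le\kappa^2$ and $t_4t_1\le\kappa^2$, where $\kappa=\arth(\sqrt2/2)=\log(\sqrt2+1)$, whence
\[ d_1d_2=(t_1+t_3)(t_2+t_4)=t_1t_2+t_2t_3+t_3t_4+t_4t_1\le 4\kappa^2=\bigl(2\log(\sqrt2+1)\bigr)^2. \]
Similarly, part (4) of Theorem \ref{Lamdad} (the case $L=1$) gives $t_1+t_2\ge\mu$, $t_2+t_3\ge\mu$, $t_3+t_4\ge\mu$ and $t_4+t_1\ge\mu$, where $\mu=\arth(2\sqrt2/3)=2\log(\sqrt2+1)$; adding the four inequalities yields $2(t_1+t_2+t_3+t_4)\ge 4\mu$, that is $d_1+d_2=t_1+t_2+t_3+t_4\ge 2\mu=4\log(\sqrt2+1)$.

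For the equality statements, note that since each of the four products in the displayed chain is $\le\kappa^2$, equality forces each to equal $\kappa^2$, i.e. equality in each of the four applications of Theorem \ref{Lamdd}; likewise equality in the sum forces equality in each application of Theorem \ref{Lamdad}(4). By those theorems this means that for every $v\in\{a,b,c,d\}$ the geodesic ray from $O$ to $v$ bisects the right angle cut out at $O$ by $\gamma_1$ and $\gamma_2$ in the corresponding sub-quadrilateral. Sending $O$ to the centre of $\BB$, these four rays become Euclidean radii and the bisection property forces them to be mutually at angles $\pi/2$, so $\{a,b,c,d\}$ is a rotation of $\{1,i,-1,-i\}$, which has absolute cross ratio $|1,i,-1,-i|=2$. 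Conversely, any ideal quadrilateral with $|a,b,c,d|=2$ is isometric to $Q(1,i,-1,-i)$, and for the latter the symmetry makes all four applications of Theorems \ref{Lamdd} and \ref{Lamdad}(4) sharp; hence equality holds in either estimate precisely when $|a,b,c,d|=2$.

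The step I expect to cost the most care is the subdivision itself: checking that $\gamma_1$ and $\gamma_2$ genuinely meet inside $Q(a,b,c,d)$ and at a right angle (the normalisation above makes this a one-line calculation, but it must be recorded), and correctly matching the two opposite-side distances of each sub-Lambert quadrilateral with the segments $t_1,t_2,t_3,t_4$ so that Theorems \ref{Lamdd} and \ref{Lamdad} are invoked with $L=1$. Once this identification is pinned down, both chains of inequalities and the equality analysis are routine.
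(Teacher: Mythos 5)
Your argument is correct, but it takes a genuinely different route from the paper's proof of Corollary \ref{dd}. You carry out the subdivision that the introduction only alludes to: the common perpendiculars $\gamma_1,\gamma_2$ of the two pairs of opposite sides meet orthogonally at an interior point $O$ (your normalisation $0,x,1,\infty$ in $\UH$ makes this a clean computation, since $(\sqrt{x})^2+(\sqrt{1-x})^2=1$), cutting $Q(a,b,c,d)$ into four Lambert quadrilaterals with ideal vertex, hence $L=1$, whose opposite-side distances are the side lengths $t_i$ at $O$; four applications of Theorem \ref{Lamdd} and of Theorem \ref{Lamdad}(4), summed, give both bounds, and the equality analysis reduces to the bisector condition at $O$ for all four ideal vertices, which forces the image of $\{a,b,c,d\}$ under a map sending $O\to 0$ to be a rotation of $\{1,i,-1,-i\}$. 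The paper instead normalises the ideal quadrilateral by a M\"obius automorphism of $\BB$ to the symmetric quadruple $e^{i\alpha},-e^{-i\alpha},-e^{i\alpha},e^{-i\alpha}$ with $\cos\alpha=\sqrt{1/|a,b,c,d|}$, so that Proposition \ref{a} and the computation in the proof of Theorem \ref{Lamdd} give the exact one-parameter formulas $d_1=2\arth r$, $d_2=2\arth r'$, $r=\cos\alpha$; the two inequalities and the equality case $r=\sqrt{2}/2$, i.e.\ $|a,b,c,d|=2$, then follow in a single application of Lemmas \ref{lecr}(2) and \ref{lecra}(4). The paper's normalisation buys closed-form expressions for $d_1,d_2$ in terms of the absolute ratio and an immediate equality discussion; your decomposition buys a proof that does not use the symmetric normalisation and exhibits the geometric mechanism explicitly, at the cost of the orthogonal-intersection check, the identification of the sub-quadrilaterals' opposite-side distances with the $t_i$ (which is exactly what the proof of Theorem \ref{Lamdd} supplies), and a longer equality argument. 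You also quietly invoke two standard facts --- that the distance between disjoint geodesics without common endpoints is realised along their common perpendicular, and that positively ordered boundary quadruples with the same absolute ratio are equivalent under a M\"obius automorphism of $\BB$ --- both are acceptable, and the paper's own normalisation step rests on the latter as well.
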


\medskip
\begin{figure}[h]
\centering
\includegraphics[width=7cm]{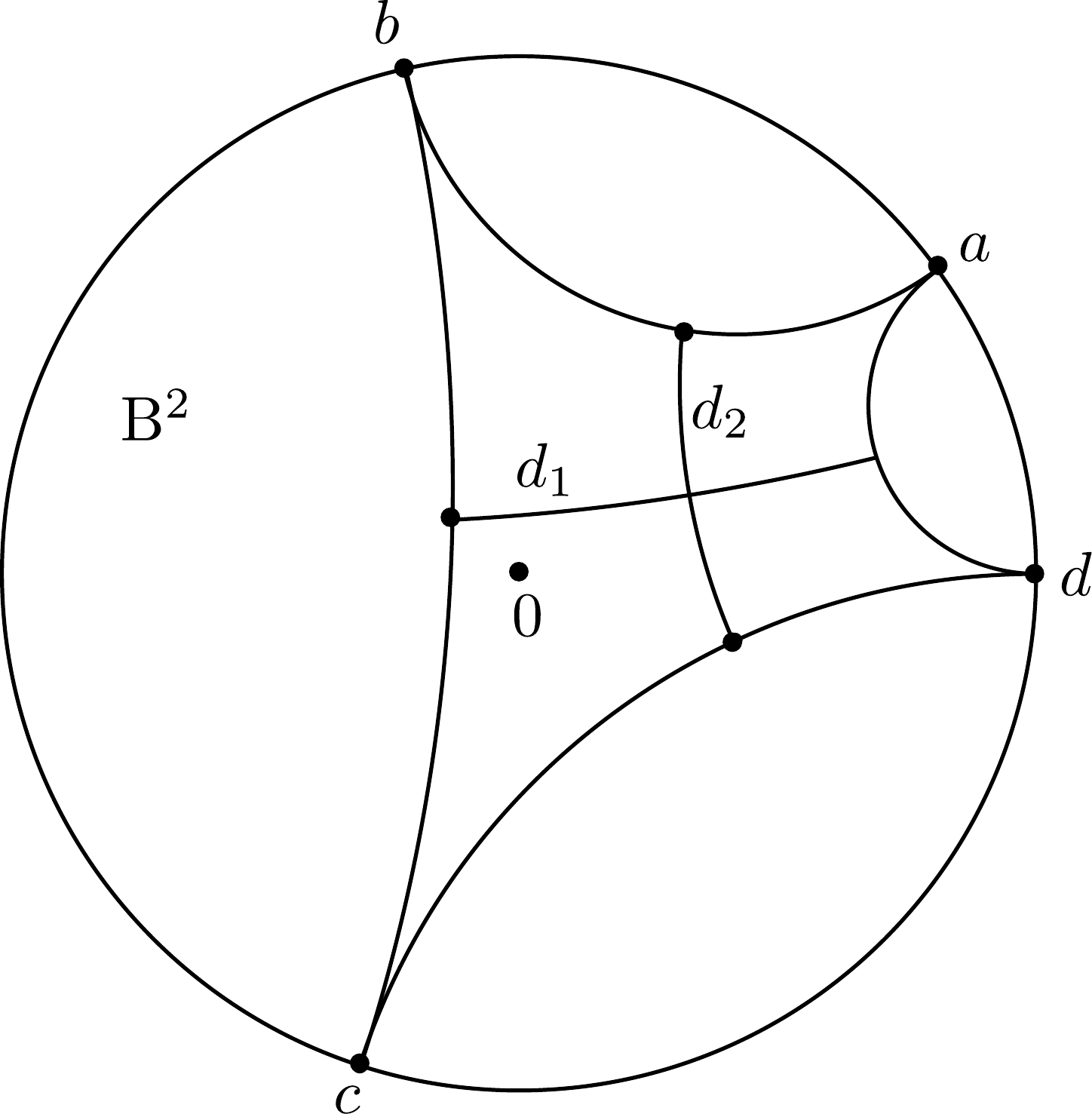}
\caption{\label{thdd} An ideal hyperbolic quadrilateral in $\BB$.}
\end{figure}
\medskip

\begin{remark}
$|a,b,c,d|=2$ means that there exists a M\"obius transformation $f$ such that $f(a)=1$,$f(b)=i$,$f(c)=-1$,$f(d)=-i$, see \eqref{crossratio}.
\end{remark}

\begin{theorem}\label{thqcLam}
Let $f: \BB\rightarrow\BB$ be a $K$-quasiconformal mapping with $f \BB= \BB$
and let $Q(v_a, v_b, v_c, v_d)$, $d_1\,, d_2$, $L$ be as in Theorem \ref{Lamdd}. Let $A(K)$ be as in Lemma \ref{leqc} and $f_L(r)$ be as in Lemma \ref{lecr}(1) by taking $c=L$.
Denote $D_1=d_\rho(f(J[v_a,v_d]),f(J[v_b,v_c]))$ and
$D_2=d_\rho(f(J[v_a,v_b]), f(J[v_c,v_d]))\,.$

(1) If $0<L\leq \frac{e^2-1}{e^2+1}\approx0.761594$, then
$$D_1D_2\leq A(K)^2 \left(\arth\left(\frac{\sqrt 2}{2}L\right)\right)^{2/K} .$$

(2) If $\frac{e^2-1}{e^2+1}<L\leq1$, then let $r_L=\frac 1L\frac{e^2-1}{e^2+1}\approx\frac{0.761594}{L}$ and $$M_L=\frac{f_L(\sqrt{1-r^2_L})}{f_L(r_L)}>1.$$
Let $r_L(K)$ be the unique solution $r$ to the equation $K f_L(r)=f_L(\sqrt{1-r^2})$ with $r_L<r<1$.
Further, define
$$T(x,L)=\arth(L x)\left(\arth\left(L\sqrt{1- x^2}\right)\right)^{1/K}\,,\,0<x<1.$$
Then
\begin{eqnarray*}
D_1D_2\leq A(K)^2\max\left\{T(r_L(K),L),\,\, \left(\arth\left(\frac{\sqrt 2}{2}L\right)\right)^{2/K} \right\}
\end{eqnarray*}
if $K> M_L$, and
\begin{eqnarray*}
D_1D_2\leq A(K)^2\max\left\{T(r_L,L),\,\, \left(\arth\left(\frac{\sqrt 2}{2}L\right)\right)^{2/K} \right\}
\end{eqnarray*}
if $1\leq K\leq M_L$.
\end{theorem}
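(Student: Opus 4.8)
The plan is to reduce the estimate to a one-variable extremal problem by means of the Corollary to Theorem \ref{Lamdd}. From ${\rm th}^2 d_1 + {\rm th}^2 d_2 = L^2$ we may write ${\rm th}\,d_1 = L\cos\theta$ and ${\rm th}\,d_2 = L\sin\theta$, where (by the proof of Theorem \ref{Lamdd}) $\theta$ is the interior angle between $J[v_a,v_b]$ and $J[v_a,v_c]$; equivalently, setting $r = \cos\theta \in (0,1)$, one has $d_1 = \arth(Lr)$ and $d_2 = \arth(L\sqrt{1-r^2})$, and every admissible Lambert quadrilateral with the given $L$ corresponds to some $r \in (0,1)$ (possibly after interchanging $d_1,d_2$). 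So the task is to bound $D_1 D_2$ uniformly in $r$. The second ingredient is the quasiconformal distortion bound of Lemma \ref{leqc}: since $f$ is a $K$-quasiconformal self-map of $\BB$ and distances between sets are infima of point distances, it yields $D_i \le A(K)\max\{d_i, d_i^{1/K}\}$ for $i=1,2$ (here one uses that $t \mapsto \max\{t,t^{1/K}\}$ is increasing). As $K \ge 1$, we have $\max\{d_i,d_i^{1/K}\} = d_i^{1/K}$ when $d_i \le 1$ and $= d_i$ when $d_i \ge 1$, and $d_i \le 1 \iff {\rm th}\,d_i \le {\rm th}\,1 = (e^2-1)/(e^2+1)$; this is the source of the threshold in the statement.

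For case (1), the hypothesis $L \le (e^2-1)/(e^2+1)$ forces ${\rm th}\,d_i \le L \le (e^2-1)/(e^2+1)$, hence $d_i \le 1$, for all $r$ and both $i$; therefore $D_1 D_2 \le A(K)^2 (d_1 d_2)^{1/K}$, and Theorem \ref{Lamdd} gives $d_1 d_2 \le (\arth(\tfrac{\sqrt 2}{2}L))^2$, which is the claimed inequality. For case (2), put $r_L = \tfrac1L\,(e^2-1)/(e^2+1)$; since $L \le 1$ one has $r_L \ge 1/\sqrt 2$, hence $\sqrt{1-r_L^2} \le 1/\sqrt 2 \le r_L$. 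I would split $(0,1)$ into three ranges. For $r \in [\sqrt{1-r_L^2}, r_L]$ both $d_i \le 1$, so $D_1 D_2 \le A(K)^2 (d_1 d_2)^{1/K} \le A(K)^2 (\arth(\tfrac{\sqrt 2}{2}L))^{2/K}$ as in case (1) (the maximiser $r = 1/\sqrt 2$ of $d_1 d_2$ lies in this range). For $r \in (r_L,1)$ we have $d_1 > 1 \ge d_2$, whence $D_1 \le A(K)d_1$, $D_2 \le A(K)d_2^{1/K}$ and $D_1 D_2 \le A(K)^2 T(r,L)$. For $r \in (0,\sqrt{1-r_L^2})$ the roles of $d_1$ and $d_2$ are exchanged, and the substitution $r \leftrightarrow \sqrt{1-r^2}$ shows this range again contributes $A(K)^2\sup_{x \in (r_L,1)} T(x,L)$. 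Hence $D_1 D_2 \le A(K)^2 \max\{\,\sup_{r\in[r_L,1)} T(r,L),\ (\arth(\tfrac{\sqrt 2}{2}L))^{2/K}\,\}$, and it remains to evaluate $\sup_{r\in[r_L,1)} T(r,L)$.

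This last point is the heart of the matter and the main obstacle. Differentiating $\log T(r,L) = \log\arth(Lr) + \tfrac1K\log\arth(L\sqrt{1-r^2})$ and clearing denominators shows that $\partial_r T(r,L) = 0$ is equivalent to $Kf_L(r) = f_L(\sqrt{1-r^2})$, with $f_L$ the auxiliary function of Lemma \ref{lecr}(1) (for $c=L$); its monotonicity, established there, shows this equation has at most one root in $(r_L,1)$, and evaluating both sides at $r = r_L$ identifies $M_L = f_L(\sqrt{1-r_L^2})/f_L(r_L)$ as the critical value of $K$. If $K > M_L$, then $T(\cdot,L)$ increases at $r_L$, tends to $0$ as $r \to 1^-$, and attains its maximum at the unique critical point $r_L(K)$; if $1 \le K \le M_L$, then $T(\cdot,L)$ is decreasing on $[r_L,1)$ and its maximum is $T(r_L,L)$. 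Substituting back into the bound of the previous paragraph gives the two displayed formulas of case (2).

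I expect the only genuinely delicate step to be the very last one: showing that $\partial_r T$ changes sign at most once on $(r_L,1)$ and locating that sign change, i.e. the reduction to the monotone function $f_L$ and the emergence of the dichotomy $K > M_L$ versus $1 \le K \le M_L$; everything else is bookkeeping with the distortion inequality and the identity ${\rm th}^2 d_1 + {\rm th}^2 d_2 = L^2$. As sanity checks one verifies $\arth(Lr_L) = \arth((e^2-1)/(e^2+1)) = 1$, so that $T(r_L,L) = (\arth(L\sqrt{1-r_L^2}))^{1/K}$, and that $K = 1$ recovers Theorem \ref{Lamdd}, since $A(1) = 1$, $M_L > 1$, and $T(r_L,1) \le (\arth(\tfrac{\sqrt 2}{2}L))^2$.
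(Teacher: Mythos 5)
Your proposal is correct and follows essentially the same route as the paper: the same reduction to $d_1=\arth(Lr)$, $d_2=\arth(Lr')$, the same application of the distortion bound $D_i\leq A(K)\max\{d_i,d_i^{1/K}\}$, the same three-range split at $r_L$ and $\sqrt{1-r_L^2}$ with the symmetric substitution, and the same logarithmic differentiation reducing the extremal problem to $Kf_L(r)=f_L(\sqrt{1-r^2})$ and the dichotomy at $M_L$ via the monotonicity in Lemma \ref{lecr}(1). (The only blemish is the harmless slip $T(r_L,1)$ in your final sanity check, which should read $T(r_L,L)$ with $K=1$.)
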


\begin{corollary}\label{thqc}
Let $f: \BB\rightarrow\BB$ be a $K$-quasiconformal mapping with $f \BB= \BB\,$
and let $Q(a,b,c,d)$, $d_1, d_2$ be as in Corollary \ref{dd}. Let $A(K)$ be as in Lemma \ref{leqc} and $f_1(r)$ be as in Lemma \ref{lecr}(1)
by taking $c=1$. Denote $D_1=d_\rho(f(J^*[a,d]), f(J^*[b,c]))$ and
$D_2=d_\rho(f(J^*[a,b]),f(J^*[c,d])).$
 Further denote $r_1=\frac{2 \sqrt e}{e+1}\approx0.886819$ and
$$M_1=\frac{(e-1)(\log(\sqrt e+1)-\log(\sqrt e-1))}{ \sqrt e}\approx 1.46618\,$$
and
define $r_1(K)$ to be the unique solution $r$ to the equation $K f_1(r)=f_1(\sqrt{1-r^2})$ with $r_1<r<1$.
With the notation
$$T(x)=\arth(x)\left(\arth(\sqrt{1- x^2})\right)^{1/K}\,,\,0<x<1\,,$$
we have
\begin{eqnarray*}
D_1D_2\leq A(K)^2\max\left\{2^{1+1/K}T(r_1(K)),\,\, \left(2\log(\sqrt{2}+1)\right)^2 \right\}
\end{eqnarray*}
if $K> M_1$, and
\begin{eqnarray*}
D_1D_2\leq A(K)^2\max\left\{2^{1+1/K}T(r_1),\,\, \left(2\log(\sqrt{2}+1)\right)^2\right\}
\end{eqnarray*}
if $1\leq K\leq M_1$.
\end{corollary}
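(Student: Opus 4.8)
The plan is to derive Corollary \ref{thqc} from Theorem \ref{thqcLam} by the same subdivision trick that was announced in the introduction: an ideal hyperbolic quadrilateral $Q(a,b,c,d)$ in $\BB$ can be cut along its two diagonals $J^*[a,c]$ and $J^*[b,d]$ into four hyperbolic Lambert quadrilaterals meeting at the common point $p$ of the diagonals, each having a right angle at $p$, right angles at the two feet of perpendiculars, and angle $0$ at the ideal vertex. For such a subquadrilateral the parameter $L = \mathrm{th}\,\rho(v_a,v_c)$ becomes $L=1$, since the relevant "diagonal" runs from the interior point $p$ to an ideal vertex, so $\rho=\infty$ and $\mathrm{th}\,\rho=1$. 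Hence we are exactly in case (2) of Theorem \ref{thqcLam} with $L=1$, and the constants specialize: $r_L = r_1 = \frac{2\sqrt e}{e+1}$ and $M_L = M_1$, where the formula for $M_1$ comes from plugging $L=1$, $c=1$ into $M_L = f_1(\sqrt{1-r_1^2})/f_1(r_1)$ and simplifying $f_1$ (the explicit special function from Lemma \ref{lecr}(1)) at these arguments; likewise $r_L(K)=r_1(K)$ and $T(x,1)=T(x)$.

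The second ingredient is that the two distances $D_1,D_2$ for the ideal quadrilateral decompose additively over the subdivision. Concretely, $d_\rho(J^*[a,d],J^*[b,c])$ for the big quadrilateral equals the sum of the two "$d_1$-type" distances of the two Lambert pieces lying on either side of the diagonal $J^*[b,d]$ (the common perpendicular to $J^*[a,d]$ and $J^*[b,c]$ passes through $p$ and is split by $p$), and similarly for $D_2$ with the other diagonal. So if $\widehat d_1,\widehat d_2$ denote the Lambert-quadrilateral distances for a single piece (all four pieces being isometric to quadrilaterals covered by Theorem \ref{thqcLam} with $L=1$), then after applying $f$ each image distance is again a sum of two image sub-distances. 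One then uses that the hyperbolic distance between images is additive under the concatenation of the images of the two perpendicular halves — more carefully, one bounds each image sub-distance using Theorem \ref{thqcLam}(2) with $L=1$ and then adds. Since the bound in Theorem \ref{thqcLam}(2) with $L=1$ reads $A(K)^2\max\{T(r_1(K)),(2\log(\sqrt2+1))^2\}$ (or the $r_1$-version for small $K$) for a product $\widehat D_1\widehat D_2$ of two sub-distances, and $D_1 D_2 = (\widehat D_1+\widehat D_1')(\widehat D_2+\widehat D_2')$, the factor $2^{1+1/K}$ in the statement is precisely what appears when one bounds the product of sums $(\widehat D_1+\widehat D_1')(\widehat D_2+\widehat D_2')$ by $4\,\widehat D_1\widehat D_2$ at the extremal configuration and tracks how the exponent $1/K$ interacts with doubling one of the two factors (one factor is a $d_1$-type distance raised to power $1$, the other a $d_2$-type distance effectively raised to power $1/K$ after the quasiconformal distortion via Lemma \ref{lecr}, so doubling both contributes $2\cdot 2^{1/K}=2^{1+1/K}$).

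The key steps, in order, are: (i) set up the diagonal subdivision of $Q(a,b,c,d)$ into four Lambert quadrilaterals and verify the angle pattern $(\pi/2,\pi/2,0,\pi/2)$ and that $L=1$ for each; (ii) show the additive decomposition $D_i = \widehat D_i + \widehat D_i'$ of each image distance over the subdivision, using that the relevant common perpendiculars pass through the image of the diagonal intersection point; (iii) specialize Theorem \ref{thqcLam}(2) to $L=1$, checking that $r_L,M_L,r_L(K),T(\cdot,L)$ reduce to $r_1,M_1,r_1(K),T(\cdot)$, in particular carrying out the elementary simplification of $M_1$ from $f_1$; (iv) combine, bounding $D_1D_2=(\widehat D_1+\widehat D_1')(\widehat D_2+\widehat D_2')$ by the symmetric/extremal estimate to produce the factor $2^{1+1/K}$. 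The main obstacle I expect is step (ii): justifying cleanly that the extremal (common-perpendicular) distances between images of opposite sides of the big ideal quadrilateral are genuinely the sum of the corresponding sub-distances after the quasiconformal map — the map need not send diagonals to diagonals or perpendiculars to perpendiculars, so one must argue that the bound survives by estimating each piece separately and that the worst case is attained simultaneously (i.e., the $|a,b,c,d|=2$ symmetric configuration for the ideal quadrilateral corresponds to the bisector configuration $v_c$ on the angle bisector at $v_a$ in every Lambert piece), which is where the sharpness discussion of Corollary \ref{dd} and the monotonicity built into Lemma \ref{lecr} have to be invoked with care.
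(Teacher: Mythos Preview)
Your plan has a genuine gap: Corollary~\ref{thqc} is \emph{not} obtained in the paper by specializing Theorem~\ref{thqcLam} to the four Lambert pieces, and your proposed route cannot reproduce the stated bound.

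The decisive obstruction is your step~(ii). After a $K$-quasiconformal $f$ there is no additive decomposition $D_i=\widehat D_i+\widehat D_i'$: $f$ does not send the diagonals, the common perpendiculars, or their intersection point to anything with the required incidence properties, so the image distance $D_1=d_\rho\bigl(f(J^*[a,d]),f(J^*[b,c])\bigr)$ is simply an infimum over two image curves and has no reason to split along $f(0)$. If you try to force a split via the triangle inequality through $f(0)$, you get $D_1\le \rho(f(z_1),f(0))+\rho(f(0),f(z_3))$ and the resulting bound carries a factor $4$, not $2^{1+1/K}$; moreover the case thresholds then sit at $\arth r=1$ rather than $2\,\arth r=1$, so neither $r_1=\frac{2\sqrt e}{e+1}$ nor $M_1$ would appear. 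A second symptom that the black-box use of Theorem~\ref{thqcLam} cannot work: with $L=1$ its second alternative is $\bigl(\arth(\tfrac{\sqrt2}{2})\bigr)^{2/K}$, whereas the corollary has $\bigl(2\log(\sqrt2+1)\bigr)^{2}$ with \emph{no} $1/K$ exponent. That exponent-free term arises only because, for $r\in(\sqrt{1-r_1^2},\,r_1)$, one has $d_1=2\,\arth r>1$ \emph{and} $d_2=2\,\arth r'>1$ simultaneously, a regime that is invisible at the level of a single Lambert piece (where each half-distance $\arth r,\arth r'$ is below $1$ there).

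What the paper actually does is parallel to the proof of Theorem~\ref{thqcLam} but applied directly to the \emph{full} distances. Using the M\"obius normalization from Corollary~\ref{dd} one picks the perpendicular-foot points $z_1,z_3$ on the two opposite sides realizing $d_1=\rho(z_1,z_3)=2\,\arth r$ and $z_2,z_4$ realizing $d_2=\rho(z_2,z_4)=2\,\arth r'$. Then
\[
D_1D_2\le \rho(f(z_1),f(z_3))\,\rho(f(z_2),f(z_4))
\le A(K)^2\,\max\{d_1,d_1^{1/K}\}\cdot\max\{d_2,d_2^{1/K}\}
\]
by Lemma~\ref{leqc}, and one analyzes this product in $r$. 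The factor $2^{1+1/K}$ drops out of $d_1\,d_2^{1/K}=(2\,\arth r)(2\,\arth r')^{1/K}=2^{1+1/K}T(r)$ in the asymmetric case $d_1>1>d_2$ (i.e.\ $r>r_1$), while the term $\bigl(2\log(\sqrt2+1)\bigr)^{2}$ comes from Corollary~\ref{dd} in the symmetric case $d_1,d_2>1$. The monotonicity analysis of $T(r)$ on $(r_1,1)$ via $f_1$ from Lemma~\ref{lecr}(1) then produces $r_1(K)$ and the threshold $M_1=f_1(r_1')/f_1(r_1)$, which simplifies to the displayed closed form. Redo the argument along these lines rather than trying to glue Lambert pieces.
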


\section{Preliminaries}

It is assumed that the reader is familiar with basic definitions of geometric
function theory and quasiconformal mapping theory, see e.g. \cite{be,v}.
We recall here some basic information on hyperbolic geometry \cite{be}.

The chordal metric is defined by
\beq\label{q}
\left\{\begin{array}{ll}
q(x,y)=\frac{|x-y|}{\sqrt{1+|x|^2}\sqrt{1+|y|^2}},&\,\,\, x\,,y\neq\infty,\\
q(x,\infty)=\frac{1}{\sqrt{1+|x|^2}},&\,\,\, x\neq\infty,
\end{array}\right.
\eeq
for $x,y\in\overline{\RR}$.

For an ordered quadruple $a,b,c,d$ of distinct points in $\overline{\RR}$ we define the absolute ratio by
$$|a,b,c,d|=\frac{q(a,c)q(b,d)}{q(a,b)q(c,d)}.$$
It follows from (\ref{q}) that for distinct points $a,b,c,d\in \RR$
\beq\label{crossratio}
|a,b,c,d|=\frac{|a-c||b-d|}{|a-b||c-d|}.
\eeq
The most important property of the absolute ratio is M\"obius invariance, see \cite[Theorem 3.2.7]{be}, i.e., if $f$ is a M\"obius transformation, then
$$|f(a),f(b),f(c),f(d)|=|a,b,c,d|,$$
for all distinct $a,b,c,d\in\overline{\RR}$.

For a domain $G\subsetneq \RR$ and a continuous weight function $w: G\rightarrow(0,\infty)\,,$  we define the weighted length of a rectifiable curve $\gamma\subset G$ to be
$$\ell_w(\gamma)=\int_{\gamma}w(z)|dz|$$
and the weighted distance between two points $x,y \in G $ by
$$d_w(x,y)=\inf_{\gamma}\ell_w(\gamma),$$
where the infimum is taken over all rectifiable curves in $G$ joining $x$ and $y$ ($x=(x_1,x_2),\,y=(y_1,y_2)$). It is easy to see that $d_w$ defines a metric on $G$ and $(G,d_w)$ is a metric space. We say that a curve $\gamma: [0,1]\rightarrow G$ is a geodesic joining $\gamma(0)$ and $\gamma(1)$ if for all $t\in (0,1)$, we have
$$d_w(\gamma(0),\gamma(1))=d_w(\gamma(0),\gamma(t))+d_w(\gamma(t),\gamma(1)).$$
The hyperbolic distance in $\UH$ and $\BB$ is defined in terms of the weight functions
$w_{\UH}(x)=1/{x_2}$ and  $w_{\BB}(x)=2/{(1-|x|^2)}\,,$ resp.  We also have the corresponding explicit formulas
\beq\label{cosh}
\cosh\rho_{\UH}(x,y)=1+\frac{|x-y|^2}{2x_2y_2}
\eeq
for all $x,y\in \UH$  \cite[p.35]{be}, and
\beq\label{th}
{\rm th}\frac{\rho_{\BB}(x,y)}{2}=\frac{|x-y|}{\sqrt{|x-y|^2+(1-|x|^2)(1-|y|^2)}}
\eeq
for all $x,y\in \BB$  \cite[p.40]{be}. In particular, for $t\in(0,1)$,
\beq\label{arth}
\rho_{\BB}(0,t e_1)=\log\frac{1+t}{1-t}=2\arth t.
\eeq

There is a third equivalent way to express the hyperbolic distances.
Let $G\in\{\UH,\BB\}$, $x,y\in{G}$ and let $L$ be an arc of a circle
perpendicular to $\partial G$ with $x,y\in L$ and let
$\{x_*,y_*\}=L\cap\partial G$, the points being labelled so that $x_*, x, y, y_*$ occur in this order on $L$. Then by \cite[(7.2.6)]{be}
\beq\label{rho}
\rho_G(x,y)=\sup\{\log|a,x,y,b|:a,b\in\partial G\}=\log|x_*,x,y,y_*|.
\eeq
We will omit the subscript $G$ if it is clear from the context.
The hyperbolic distance is invariant under M\"obius transformations of $G$ onto $G'$ for $G,\,G'\in\{\UH,\BB\}$.

Hyperbolic geodesics are  arcs of circles which are orthogonal to the boundary of the domain. More precisely, for $a,b\in \BB$ (or $\UH)$, the hyperbolic geodesic segment joining $a$ to $b$ is an arc of a circle orthogonal to $S^1$ (or $\partial \UH)$. In a limiting case the points $a$ and $b$ are located on a Euclidean line through $0$ (or located on a normal of $\partial \UH$), see \cite{be}.  Therefore, the points $x_*$ and $y_*$ are the end points of the hyperbolic geodesic. For any two distinct points the hyperbolic geodesic segment is unique (see Figure \ref{h2} and \ref{b2}). For basic facts about hyperbolic geometry we refer the interested reader to \cite{a}, \cite{be} and \cite{kl}.

\medskip
\begin{figure}[h]
\begin{minipage}[t]{0.45\linewidth}
\centering
\includegraphics[width=8cm]{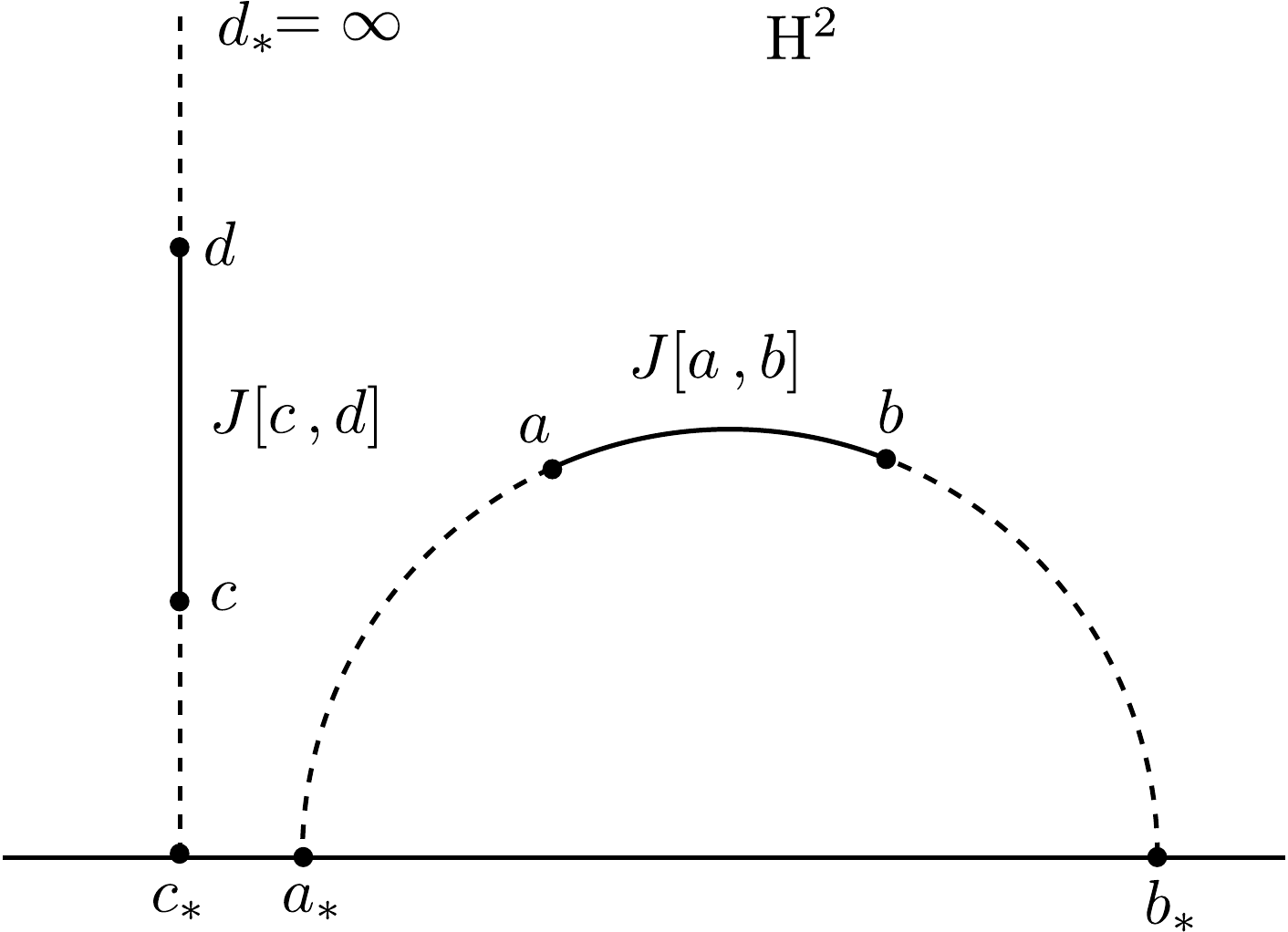}
\caption{\label{h2} Hyperbolic geodesic segments in $\UH$.}
\end{minipage}
\hfill
\hspace{1cm}
\begin{minipage}[t]{0.45\linewidth}
\centering
\includegraphics[width=6cm]{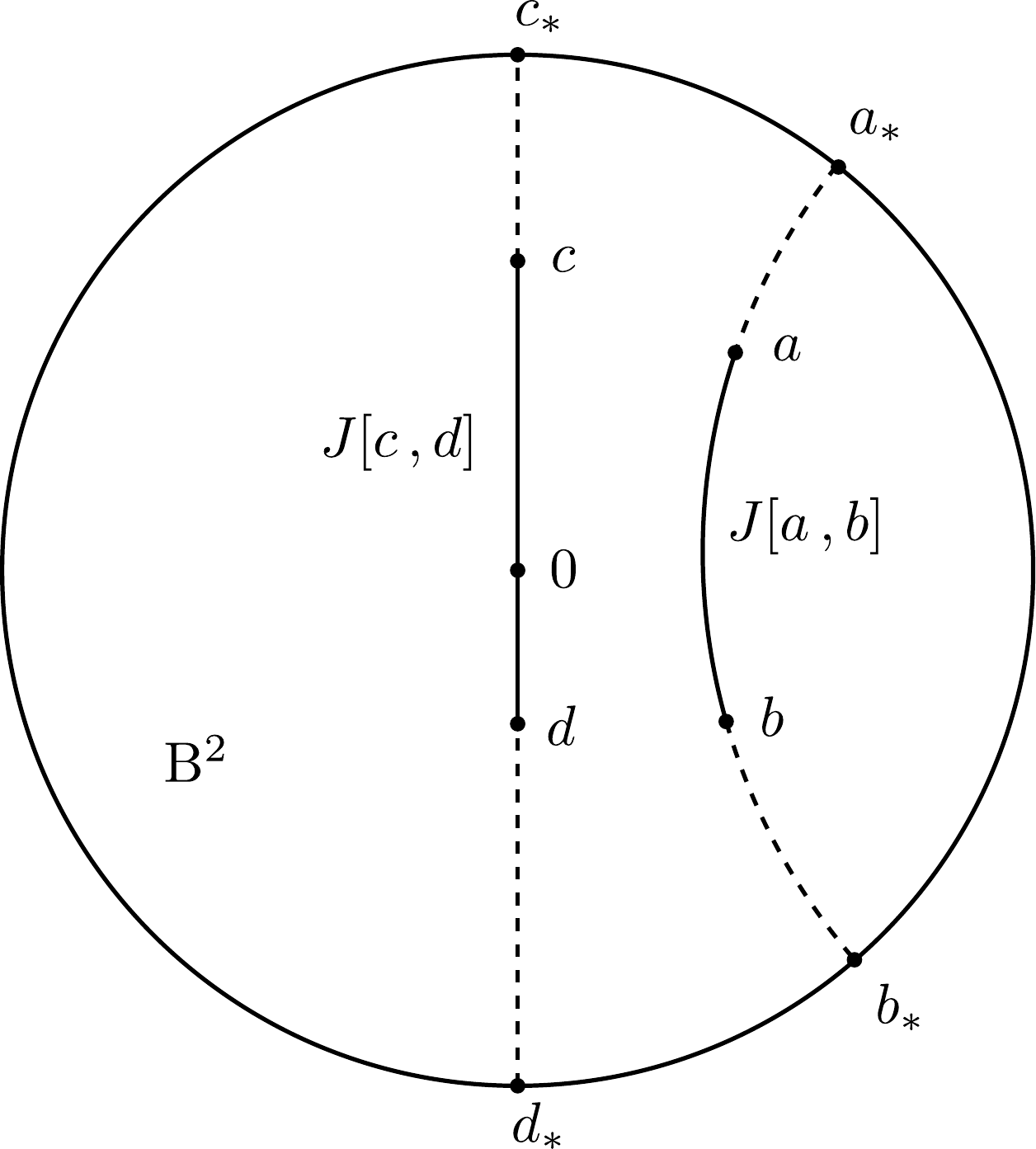}
\caption{\label{b2} Hyperbolic geodesic segments in $\BB$.}
\end{minipage}
\end{figure}
\medskip

By \cite[Exercise 1.1.27]{k} and \cite[Lemma 2.2]{kv}, for  $x\,,y \,\in \mathbb{R}^2\setminus\{0\}$ such that $0,x,y$ are noncollinear,
the circle $S^1(a,r_a)$ containing $x, y$ is orthogonal to the unit circle, where
\beq\label{orar}
a =i\frac{y(1+|x|^2)-x(1+|y|^2)}{2(x_2y_1-x_1y_2)}\,\,\,\, {\rm and}\, \,\,\,r_a=\frac {|x-y|\big|x|y|^2-y\big|}{2|y||x_1y_2-x_2y_1|}\,.
\eeq

For $r,s\in(0,+\infty)$, the \emph{H\"older mean of order $p$} is defined by
$$H_p(r,s)=\left(\frac{r^p+s^p}{2}\right)^{1/p} \quad\mbox{for}\quad{p\neq 0},
\quad H_0(r,s)=\sqrt{r\,s}.$$
For $p=1$, we get the arithmetic mean $A=H_1$; for $p=0$, the
geometric mean $G=H_0$; and for $p=-1$, the harmonic mean $H=H_{-1}$.
It is well-known that $H_p(r,s)$ is continuous and increasing with respect to $p$.
Many interesting properties of H\"older means are given in \cite{bu} and \cite{hlp}.

A function $f:I\to J$ is called {\it $H_{p,q}$-convex (concave)} if it satisfies
$$f(H_p(r,s))\leq(\geq)H_q(f(r),f(s))$$
for all $r,s\in I$, and  {\it strictly $H_{p,q}$-convex (concave)} if the
inequality is strict except for $r=s$. For $H_{p,q}$-convexity
of some special functions the reader is referred to \cite{avv2, ba, wzj1, wzj2, zwc}.

Some other notation is also needed in the paper. Let $[a,b]$ be the Euclidean segment with end points $a$ and $b$.
Let $X, Y$ be the real axis and imaginary axis, resp.
Let $\Arc(abc)$ be the circular arc with end points $a,c$ and through $b$, and $\SArc(ac)$ be semicircle with end point $a,c$.

\medskip

The next lemma, so-called {\em
monotone form of l'H${\rm \hat{o}}$pital's rule}, has found recently numerous applications in proving inequalities. See the extensive bibliography of \cite{avz}.

\begin{lemma} \label{lhr}{\rm \cite[Theorem 1.25]{avv1}}
For $-\infty<a<b<\infty$, let $f,\,g: [a,b]\rightarrow \mathbb{R}$ be continuous on $[a,b]$, and be differentiable on $(a,b)$, and let $g'(x)\neq 0$ on $(a,b)$. If $f'(x)/g'(x)$ is increasing (deceasing) on $(a,b)$, then so are
\begin{eqnarray*}
\frac{f(x)-f(a)}{g(x)-g(a)}\,\,\,\,\,\,\,and\,\,\,\,\,\,\,\,\frac{f(x)-f(b)}{g(x)-g(b)}.
\end{eqnarray*}
If $f'(x)/g'(x)$ is strictly monotone, then the monotonicity in the conclusion is also strict.
\end{lemma}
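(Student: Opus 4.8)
The final statement to prove is Lemma~\ref{lhr}, the monotone form of l'Hôpital's rule. Since this is quoted from \cite[Theorem 1.25]{avv1}, my proof plan follows the standard argument based on the Cauchy mean value theorem. The plan is to first handle the case where $f'/g'$ is increasing, and then the decreasing case follows by replacing $f$ with $-f$.

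\textbf{Plan of proof.} First I would note that since $g'$ is continuous and nonvanishing on $(a,b)$, it has constant sign there; without loss of generality assume $g' > 0$ (otherwise replace $g$ by $-g$, which reverses neither the hypothesis nor the conclusion after an obvious sign bookkeeping), so $g$ is strictly increasing and in particular $g(x) - g(a) > 0$ and $g(x) - g(b) < 0$ on $(a,b)$. The core step is to fix $x \in (a,b)$ and apply the Cauchy mean value theorem to the pair $(f,g)$ on the interval $[a,x]$: there exists $\xi \in (a,x)$ with
\[
\frac{f(x) - f(a)}{g(x) - g(a)} = \frac{f'(\xi)}{g'(\xi)}.
\]
Now take $a < x_1 < x_2 < b$. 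Applying the above on $[a,x_1]$ and on $[a,x_2]$ gives points $\xi_1 \in (a,x_1)$ and $\xi_2 \in (a,x_2)$ representing the two difference quotients; these are not directly comparable since $\xi_1,\xi_2$ need not be ordered, so instead I would compare each quotient to the value of $f'/g'$ at intermediate points more carefully. The cleaner route: for $a < x_1 < x_2 < b$, apply the Cauchy mean value theorem on $[x_1,x_2]$ to get $\eta \in (x_1,x_2)$ with $(f(x_2)-f(x_1))/(g(x_2)-g(x_1)) = f'(\eta)/g'(\eta)$. Write $H(x) = (f(x)-f(a))/(g(x)-g(a))$; a short computation shows
\[
H(x_2) - H(x_1) = \frac{g(x_1) - g(a)}{g(x_2) - g(a)} \left( \frac{f(x_2) - f(x_1)}{g(x_2) - g(x_1)} - \frac{f(x_1) - f(a)}{g(x_1) - g(a)} \right) \cdot \frac{g(x_2) - g(x_1)}{g(x_1) - g(a)},
\]
so up to a positive factor $H(x_2) - H(x_1)$ has the sign of $f'(\eta)/g'(\eta) - f'(\xi_1)/g'(\xi_1)$ where $\xi_1 \in (a,x_1)$ and $\eta \in (x_1,x_2)$; since $\xi_1 < x_1 < \eta$ and $f'/g'$ is increasing, this difference is $\ge 0$, so $H$ is increasing. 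If $f'/g'$ is strictly increasing then $\xi_1 < \eta$ forces strict inequality, giving strict monotonicity of $H$.

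\textbf{The second quotient.} For $K(x) = (f(x) - f(b))/(g(x) - g(b))$ I would argue symmetrically: apply the Cauchy mean value theorem on $[x_2,b]$ to obtain a point in $(x_2,b)$, compare with a point in $(x_1,x_2)$ coming from the interval $[x_1,x_2]$, and use the same algebraic identity (now the prefactor involves $g(x)-g(b) < 0$, so one must track the sign reversal). The ordering of the two comparison points is again automatic, and monotonicity of $f'/g'$ delivers the conclusion, with strictness preserved under strict monotonicity.

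\textbf{Main obstacle.} There is no deep obstacle here — this is a classical lemma — but the one point requiring genuine care is the sign bookkeeping: one must verify that the reductions ($g' > 0$, and $f \mapsto -f$ for the decreasing case) are legitimate and that in the identity for $K(x_2) - K(x_1)$ the factor $g(x)-g(b)$ being negative does not flip the final inequality in the wrong direction. I would carry this out explicitly for each of the two quotients rather than claiming it "by symmetry," since the signs genuinely differ between the $H$ and $K$ cases. Everything else is a direct application of the Cauchy mean value theorem together with the algebraic identity above.
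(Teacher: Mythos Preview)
The paper does not actually prove this lemma: it is quoted verbatim from \cite[Theorem 1.25]{avv1} and used as a black box, so there is no ``paper's own proof'' to compare against. Your argument via two applications of the Cauchy mean value theorem together with the algebraic identity for $H(x_2)-H(x_1)$ is correct and is one of the standard proofs of this result; the sign analysis for the $K$ quotient also works out exactly as you anticipate (the negative factor $g(x)-g(b)$ appears squared in the denominator after simplification, so no unwanted reversal occurs).

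One small slip: you write ``since $g'$ is continuous and nonvanishing on $(a,b)$, it has constant sign,'' but the hypotheses do not assume $g'$ continuous. The conclusion is still true, however, by Darboux's theorem (derivatives have the intermediate value property), so this does not affect the argument. For comparison, the proof in \cite{avv1} proceeds slightly differently: it differentiates $H(x)=(f(x)-f(a))/(g(x)-g(a))$ directly, then uses a single Cauchy mean value theorem application on $[a,x]$ to rewrite $H'(x)$ as a positive multiple of $f'(x)/g'(x)-f'(c)/g'(c)$ for some $c\in(a,x)$, from which the sign of $H'$ is immediate. That route is marginally shorter since it avoids the algebraic identity and the two-interval comparison, but your approach has the virtue of never differentiating $H$ and hence not needing $H$ to be differentiable in any stronger sense.
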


From now on we let $r'=\sqrt{1-r^2}$ for $0<r<1$.

\begin{lemma}\label{lecr}
Let $c\in(0,1]$ and $r\in(0,1)$.

(1) The function $f_c(r)\equiv\frac{1-(c r')^2}{r\arth(c r)}$ is strictly decreasing and concave with range $(0,1)$ if $c=1$, and strictly decreasing with range $(0,\infty)$ if $0<c<1$.

(2) The function $F_c(r)\equiv\arth(c r) \arth(c r')$ is strictly increasing on $(0,\frac{\sqrt{2}}{2}]$ and strictly decreasing on $[\frac{\sqrt{2}}{2}, 1)$ with maximum value $(\arth(\frac{\sqrt{2}}{2} c))^2$.
\end{lemma}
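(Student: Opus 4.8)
The plan is to treat the two parts separately, in each case reducing the claim to the monotonicity of an auxiliary quotient so that Lemma \ref{lhr} applies. For part (1), I would first dispose of the endpoint behaviour: as $r\to 0^{+}$ one has $\arth(cr)\sim cr$, so $f_c(r)\to 1/c \cdot \lim (1-(cr')^2)/(r\cdot cr)$; a careful expansion gives $f_c(0^{+})=+\infty$ when $c<1$ and $f_c(0^{+})=1$ when $c=1$ (here $1-(cr')^2=1-c^2+c^2r^2$, which tends to $1-c^2$, nonzero iff $c<1$). As $r\to 1^{-}$, $r'\to 0$ so the numerator tends to $1$ while $\arth(cr)\to\arth c$, giving $f_c(1^{-})=1/\arth c>0$ when $c<1$ and $f_1(1^{-})=0$. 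To get strict monotonicity I would write $f_c(r)=N(r)/M(r)$ with $N(r)=1-c^2+c^2r^2$ and $M(r)=r\arth(cr)$, both vanishing nowhere troublesome, and compute $f_c'$ directly; the sign of $f_c'(r)$ is the sign of $N'M-NM'=2c^2 r\cdot r\arth(cr)-(1-c^2+c^2r^2)\bigl(\arth(cr)+\tfrac{cr}{1-c^2r^2}\bigr)$. Factoring out $\arth(cr)>0$, the claim $f_c'<0$ becomes an inequality comparing $\arth(cr)$ with a rational function of $r$; I expect this to follow from the elementary bound $\arth t> t/(1-t^2)\cdot$(something) — more precisely from $\arth t < t/(1-t^2)$ together with $\arth t > t$, suitably combined. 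For concavity when $c=1$, I would differentiate once more, or — cleaner — apply Lemma \ref{lhr} a second time after rewriting $f_1(r)=\frac{(1-r'^2)/r}{\arth r}=\frac{r}{\arth r}$, wait: when $c=1$, $1-(cr')^2=1-r'^2=r^2$, so $f_1(r)=\frac{r^2}{r\arth r}=\frac{r}{\arth r}$. This simplification is the key observation for $c=1$: $f_1(r)=r/\arth r$, whose derivative has sign of $\arth r - r/(1-r^2)<0$, giving strict decrease, and whose concavity reduces to showing $(r/\arth r)''<0$, a standard fact provable via Lemma \ref{lhr} applied to $f(r)=\arth r - r$, $g(r)=$ an appropriate cubic, or by direct series/convexity arguments.

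For part (2), set $F_c(r)=\arth(cr)\arth(cr')$. Symmetry under $r\leftrightarrow r'$ is immediate since $(r')'=r$, so $F_c(r)=F_c(r')$; hence it suffices to analyse $F_c$ on $(0,\tfrac{\sqrt2}{2}]$ and show it is strictly increasing there, the behaviour on $[\tfrac{\sqrt2}{2},1)$ following by symmetry, with the common value at $r=r'=\tfrac{\sqrt2}{2}$ being $(\arth(\tfrac{\sqrt2}{2}c))^2$, which is therefore the maximum. To show $F_c$ is increasing on $(0,\tfrac{\sqrt2}{2}]$ I would compute
\[
F_c'(r)=\frac{c}{1-c^2r^2}\arth(cr')-\frac{cr}{r'}\cdot\frac{1}{1-c^2r'^2}\arth(cr)\,,
\]
so that $F_c'(r)>0$ is equivalent to
\[
\frac{r'\,\arth(cr')}{1-c^2r'^2}>\frac{r\,\arth(cr)}{1-c^2r^2}\,.
\]
This says the function $g(t)=\dfrac{t\,\arth(ct)}{1-c^2t^2}$ is strictly decreasing in $t$ on the relevant range — but wait, on $(0,\tfrac{\sqrt2}{2}]$ we have $r'\ge r$, so we actually need $g(r')\ge g(r)$ with $r'\ge r$, i.e. $g$ increasing, not decreasing. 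So the reduction is: $F_c$ is increasing on $(0,\tfrac{\sqrt2}{2}]$ iff $g(t)=t\arth(ct)/(1-c^2t^2)$ is increasing on $(0,1)$. Since $t\mapsto t/(1-c^2t^2)$ is increasing and $t\mapsto\arth(ct)$ is increasing, $g$ is a product of two positive increasing functions, hence increasing — this makes part (2) essentially immediate once the derivative is organized correctly, and the ordering $r'\ge r$ on the left half is what makes it work.

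The main obstacle I anticipate is the strict-decrease part of (1) for general $c\in(0,1)$: the inequality $N'M-NM'<0$ does not factor as cleanly as in the $c=1$ case, and one must be careful that the l'Hôpital-monotonicity route (Lemma \ref{lhr}) genuinely applies — this requires identifying the right pair $f,g$ vanishing at a common endpoint with $f'/g'$ monotone. A natural choice is $f(r)=1-(cr')^2-1\cdot\lim$… more concretely, since both $N$ and $M$ are smooth and $M(0)=0$ but $N(0)=1-c^2\ne 0$, Lemma \ref{lhr} does not directly apply to $N/M$; instead I would subtract the limit, writing $f_c(r)-f_c(0^+)$ when finite, or for $c<1$ argue directly with the derivative sign. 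I therefore expect the bulk of the work to be a self-contained verification that $2c^2r^2\arth(cr)<(1-c^2+c^2r^2)\bigl(\arth(cr)+\frac{cr}{1-c^2r^2}\bigr)$ for all $r\in(0,1)$, which should reduce, after clearing denominators, to a polynomial-times-$\arth$ inequality amenable to the standard estimates $t<\arth t<\frac{t}{1-t^2}$ or to one more application of Lemma \ref{lhr}. The concavity in (1) for $c=1$, via the reduction $f_1(r)=r/\arth r$, is then routine.
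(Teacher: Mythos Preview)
Your plan for part~(1) when $c=1$ is on the right track: the simplification $f_1(r)=r/\arth r$ is exactly what the paper exploits, and from there both monotonicity and concavity follow (the paper obtains both at once by two nested applications of Lemma~\ref{lhr} to the numerator and denominator of $f_1'$).

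For part~(1) with $0<c<1$, however, you are missing the clean reduction that the paper uses. One writes
\[
f_c(r)=f_1(cr)\cdot\frac{1-(cr')^2}{c\,r^2}\,,
\]
and the second factor has derivative $-2(1-c^2)/(cr^3)<0$. Since $f_1$ has already been shown strictly decreasing and both factors are positive, $f_c$ is a product of two positive strictly decreasing functions. This disposes of what you flag as ``the main obstacle'' without any brute-force sign analysis of $N'M-NM'$.

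More seriously, your argument for part~(2) contains an algebra slip. From your own (correct) expression for $F_c'(r)$, the condition $F_c'(r)>0$ rearranges to
\[
\frac{r'\,\arth(cr')}{1-c^2 r^2}\;>\;\frac{r\,\arth(cr)}{1-c^2 r'^2}\,,
\]
not to the inequality you display: the denominators are swapped, because the factor $1/(1-c^2r^2)$ arises from differentiating $\arth(cr)$ and therefore multiplies $\arth(cr')$, and vice versa. With the correct denominators the auxiliary function is $t\mapsto t\,\arth(ct)/(1-c^2+c^2t^2)$, which is precisely $1/f_c(t)$. Your ``product of two positive increasing functions'' shortcut then fails, since $t/(1-c^2+c^2t^2)$ is not monotone on $(0,1)$ once $c>1/\sqrt{2}$. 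In other words, part~(2) genuinely depends on part~(1): the paper writes
\[
F_c'(r)=\frac{c}{r'\,f_c(r)}\Bigl(\frac{f_c(r)}{f_c(r')}-1\Bigr)
\]
and reads off the sign from the strict decrease of $f_c$ established in~(1). Your attempt to make~(2) self-contained rests on that miscalculation.
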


\begin{proof}

(1) If $c=1$, then $f_1(r)=\frac{r}{\arth\,r}$. By differentiation,
$$f'_1(r)=\frac{h_{11}(r)}{h_{12}(r)},$$
where $h_{11}(r)=\arth\, r-\frac{r}{r'^2}$ and $h_{12}(r)=(\arth\, r)^2$. It is easy to see that $h_{11}(0^+)=h_{12}(0^+)=0$. Then
\begin{eqnarray*}
\frac{h'_{11}(r)}{h'_{12}(r)}=-\frac{h_{13}(r)}{h_{14}(r)},
\end{eqnarray*}
where $h_{13}(r)=(\frac{r}{r'})^2$ and $h_{14}(r)=\arth\, r$. Then $h_{13}(0^+)=h_{14}(0^+)=0$. By differentiation, we have
\begin{eqnarray*}
\frac{h'_{13}(r)}{h'_{14}(r)}=\frac{2r}{r'^2},
\end{eqnarray*}
which is strictly increasing. Hence by Lemma \ref{lhr}, $\frac{h'_{11}(r)}{h'_{12}(r)}$ is strictly decreasing and so is $f'_1$
with $f'_1(r)<f'_1(0^+)=0$. Therefore, $f_1$ is strictly decreasing and concave on $(0,1)$. The limiting value $f_1(1^-)=0$ is clear and $f_1(0^+)=1$ by l'H${\rm \hat{o}}$pital's Rule.

If $0<c<1$, then
$$f_c(r)=f_1(cr)h(r),$$
where $h(r)=\frac{1-(cr')^2}{cr^2}$. By differentiation,
\begin{eqnarray*}
h'(r)=-\frac{2(1-c^2)}{cr^3}<0.
\end{eqnarray*}
Therefore, $f_c$ is strictly decreasing. The limiting values are clear.
\medskip

(2) By differentiation,
$$F'_c(r)=\frac{c}{r'f_c(r)}\left(\frac{f_c(r)}{f_c(r')}-1\right).$$
$\frac{f_c(r)}{f_c(r')}$ is strictly decreasing from $(0,1)$ onto $(0,\infty)$ by (1) and $F'_c(\frac{\sqrt{2}}{2})=0$. Then
$F_c$ is strictly increasing on $(0,\frac{\sqrt{2}}{2}]$ and strictly decreasing on $[\frac{\sqrt{2}}{2}, 1)$ with maximum value $F_c(\frac{\sqrt{2}}{2})$.
\end{proof}

\begin{lemma}\label{lecra}
Let $c\in(0,1]$, $r\in(0,1)$, $m=\sqrt{(2-c^2)(3c^2-2)}$ and $r_0=\sqrt{\frac{1-m/c^2}{2}}$. Let
$$G_c(r)\equiv\arth(c r)+\arth(c r').$$

(1) If $0<c\leq\sqrt{\frac 23}$, then the range of $G_c$ is $(\arth\, c, \arth(\frac{2\sqrt 2 c}{2+c^2})]$.

(2) If $\sqrt{\frac 23}<c<\sqrt{2(\sqrt 2-1)}$, then the range of $G_c$ is $(\arth\, c, \arth (\frac{c (r_0+r'_0)}{1+c^2r_0r'_0})]$.

(3) If $\sqrt{2(\sqrt 2-1)}\leq c<1$, then the range of $G_c$ is $[\arth (\frac{2\sqrt 2 c}{2+c^2}), \arth (\frac{c (r_0+r'_0)}{1+c^2r_0r'_0})]$.

(4) If $c=1$, then the range of $G_c$ is $[\arth(\frac{2\sqrt{2}}{3}), \infty)$.

\end{lemma}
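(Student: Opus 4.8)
\textbf{Proof plan for Lemma \ref{lecra}.}

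The plan is to analyze the function $G_c(r)=\arth(cr)+\arth(cr')$ on the interval $(0,1)$ by differentiation, exactly as was done for $F_c$ in Lemma \ref{lecr}(2), but now the critical-point analysis is genuinely two-sided and the location of the maximum depends on $c$. First I would compute
$$G_c'(r)=\frac{c}{1-(cr)^2}-\frac{c}{1-(cr')^2}\cdot\frac{r}{r'}\,,$$
using $\frac{d}{dr}r'=-r/r'$. Setting $G_c'(r)=0$ and clearing denominators leads to a polynomial relation in $r^2$ (after replacing $r'^2=1-r^2$): the equation $r'(1-(cr')^2)=r(1-(cr)^2)$, and squaring both sides gives a quadratic in $s=r^2$ whose roots are $s=1/2$ (always a solution, corresponding to the symmetric point $r=r'=\sqrt2/2$) and $s=r_0^2=\frac{1-m/c^2}{2}$, where $m=\sqrt{(2-c^2)(3c^2-2)}$. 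The quantity $3c^2-2$ under the radical signals that a second genuine critical point $r_0$ (and its companion $r'_0$, since $G_c$ is symmetric under $r\leftrightarrow r'$) exists only once $c^2\geq 2/3$; that is the source of the case split. I would also note $r_0^2+r_0'^2=1$ automatically and compute the boundary value $G_c(0^+)=\arth\,c$ and, when $c<1$, $G_c(1^-)=\arth\,c$ as well, while $G_c(1^-)=+\infty$ when $c=1$.

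Next I would evaluate $G_c$ at the candidate extrema. At the symmetric point, $G_c(\sqrt2/2)=2\arth(\tfrac{\sqrt2}{2}c)=\arth\!\big(\tfrac{2\sqrt2 c}{2+c^2}\big)$ by the addition formula $\arth x+\arth y=\arth\frac{x+y}{1+xy}$. At $r_0$ (equivalently $r_0'$), the same addition formula gives $G_c(r_0)=\arth\!\big(\tfrac{c(r_0+r_0')}{1+c^2 r_0 r_0'}\big)$. To decide which of these is the maximum and which is a local minimum, I would look at the sign of $G_c'$ near $0$: since $G_c'(0^+)=c-(\text{something})\cdot 0/0'\to c>0$... more carefully, near $r=0$ the term $\frac{cr}{r'(1-(cr')^2)}\to 0$ while $\frac{c}{1-(cr)^2}\to c>0$, so $G_c'(0^+)>0$; hence $r=\sqrt2/2$ is always a local max when it is the only interior critical point (case $c\le\sqrt{2/3}$), giving part (1). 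When $c^2>2/3$ there are three interior critical points $r_0<\sqrt2/2<r_0'$ (or the reverse ordering — I should check $r_0<\sqrt2/2$, which holds since $m>0$), and by the first-derivative sign pattern $\sqrt2/2$ becomes the local \emph{minimum} while $r_0,r_0'$ are the two (equal-valued) local maxima. Comparing $G_c(r_0)$ with the endpoint value $\arth c$ and with $G_c(\sqrt2/2)$ then produces the three subcases: the threshold $c=\sqrt{2(\sqrt2-1)}$ is exactly where $G_c(\sqrt2/2)=\arth c$ (one checks $\tfrac{2\sqrt2 c}{2+c^2}=c\iff 2\sqrt2=2+c^2\iff c^2=2(\sqrt2-1)$), which explains why below that threshold the infimum $\arth c$ is not attained and the range is the half-open interval of (2), while above it the minimum $\arth(\tfrac{2\sqrt2c}{2+c^2})$ is attained and the range is the closed interval of (3). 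Part (4), $c=1$, follows by the same picture with the right endpoint escaping to $+\infty$: $r_0$ and $r_0'$ merge with $\sqrt2/2$ when $c=1$ (indeed $m=\sqrt{1\cdot1}=1$ so $r_0^2=(1-1)/2=0$ — so actually for $c=1$ the extra roots degenerate and $\sqrt2/2$ is again the only interior critical point, now a minimum because $G_1(1^-)=\infty$), giving minimum $G_1(\sqrt2/2)=\arth(2\sqrt2/3)$.

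The main obstacle I anticipate is bookkeeping the sign of $G_c'$ across the (up to three) interior critical points and confirming the ordering $r_0<\tfrac{\sqrt2}{2}<r_0'$ together with the fact that $r_0'=\sqrt{1-r_0^2}$ is indeed the \emph{other} root — this is where a clean use of the symmetry $G_c(r)=G_c(r')$ pays off, since it lets me reduce all sign analysis to the interval $(0,\sqrt2/2]$ and identify $r_0$ there. A secondary technical point is that squaring the critical-point equation can introduce spurious roots, so I must verify that the algebraic solution $r_0$ actually satisfies $G_c'(r_0)=0$ (not just its square), which amounts to checking a sign condition that holds precisely on the relevant $c$-range. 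Once the extremal values and the attainment/non-attainment at the endpoint $\arth c$ are pinned down, assembling the four ranges is immediate from the Intermediate Value Theorem and continuity of $G_c$.
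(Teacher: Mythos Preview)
Your plan is correct and follows essentially the same route as the paper: locate the critical points of $G_c$, identify the split at $c^2=2/3$ (where the extra pair $r_0,r_0'$ appears) and at $c^2=2(\sqrt2-1)$ (where the local minimum $G_c(\sqrt2/2)$ crosses the boundary value $\arth c$), and read off the range from the monotonicity pattern and the endpoint limits. One small technical difference: the paper does not differentiate $G_c$ directly but instead sets $g_c(r)={\rm th}\,G_c(r)=\dfrac{c(r+r')}{1+c^2rr'}$ and differentiates that rational expression, obtaining the already-factored form
\[
r'(1+c^2rr')^2\,g_c'(r)=c(r'-r)\bigl(1-c^2-c^2rr'\bigr),
\]
so that the two factors $r'-r$ and $1-c^2-c^2rr'$ (hence the quadratic $c^4x^2-c^4x+(1-c^2)^2=0$ in $x=r^2$) fall out without squaring. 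Your ``square and then factor out $s=1/2$'' step reaches the same quadratic but through a cubic, and is the source of the spurious-root worry you flag; the paper's device of passing to $g_c={\rm th}\,G_c$ (or, equivalently, factoring $r'(1-(cr')^2)-r(1-(cr)^2)=(r'-r)(1-c^2(1+rr'))$ directly) sidesteps that entirely. Otherwise the sign analysis, the symmetry reduction to $(0,\sqrt2/2]$, and the comparison $g_c(0)=c$ versus $g_c(\sqrt2/2)=\dfrac{2\sqrt2 c}{2+c^2}$ are exactly what the paper does.
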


\begin{proof}
It is clear that the limiting values
$$
G_c(0^+)=G_c(1^-)=
\left\{\begin{array}{ll}
\arth\, c,&\,\,\,0<c<1,\\
\infty,&\,\,\,c=1.
\end{array}\right.
$$
Let
$$g_c(r)={\rm th}G_c(r)=\frac{c(r+r')}{1+c^2 r r'}\,.$$
By differentiation, we have
$$
r'(1+c^2r r')^2 g'_c(r)=c(r'-r)(1-c^2-c^2 r r').
$$
Making substitution of $x=r^2$, we get
\begin{eqnarray}\label{cx}
& &1-c^2-c^2 r r'=0\\
&\Leftrightarrow&
c^4x^2-c^4x+(1-c^2)^2=0\,.\nonumber
\end{eqnarray}
Therefore, equation (\ref{cx}) has no root if $0<c^2< \frac 23$ or $c=1$ ,
only one root $\frac{\sqrt{2}}{2}$ if $c^2=\frac 23$,
and two different roots $r_0\,,r'_0$ if $\frac 23<c^2<1$. It is obvious that $r_0\in (0,\frac{\sqrt 2}{2})$.

It is easy to see that $c=g_c(0)<g_c(\frac{\sqrt{2}}{2})=\frac{2\sqrt 2}{2+c^2}c$ if and only if $c^2<2(\sqrt 2-1)\approx0.828427$. If $\sqrt{\frac 23}<c<1$, $g_c$ is increasing on $(0,r_0)$ , $(\frac{\sqrt{2}}{2}, r'_0)$ and decreasing on $(r_0, \frac{\sqrt{2}}{2})$, $(r'_0,1)$.

Now we get the following conclusions.

(1) If $0<c\leq\sqrt{\frac 23}$, $g'_c(r)=0$ is equivalent to $r=\frac{\sqrt{2}}{2}$.
Then
$$\arth\, c=\arth (g_c(0))<G_c(r)\leq \arth (g_c({\sqrt{2}}/{2}))=\arth\left(\frac{2\sqrt 2 c}{2+c^2}\right).$$

(2) If $\sqrt{\frac 23}<c<\sqrt{2(\sqrt 2-1)}$, then
 $$\arth\, c=\arth (g_c(0))<G_c(r)\leq\arth (g_c(r_0))=\arth\left(\frac{c (r_0+r'_0)}{1+c^2r_0r'_0}\right).$$

(3) If $\sqrt{2(\sqrt 2-1)}\leq c<1$, then
$$\arth \left(\frac{2\sqrt 2 c}{2+c^2}\right)=\arth (g_c({\sqrt{2}}/{2}))\leq G_c(r)\leq\arth (g_c(r_0))=\arth\left(\frac{c (r_0+r'_0)}{1+c^2r_0r'_0}\right).$$

(4) If $c=1$, then
$$G_c(r)\geq \arth(g_1({\sqrt{2}}/{2}))=\arth({2\sqrt{2}}/{3}).$$

This completes the proof.
\end{proof}

\begin{lemma}\label{le1} Let $r\in(0,1)$.

(1) The function $h_1(r)\equiv \frac{r'}{\arth\,r'}$ is strictly increasing and concave with range $(0,1)$.

(2) The function $h(r)\equiv \frac{r}{\arth\,r}+\frac{r'}{\arth\,r'}$ is strictly increasing on $(0,\frac{\sqrt 2}{2}]$, strictly decreasing on  $[\frac{\sqrt 2}{2},1)$, and concave on $(0,1)$ with range $(1, \frac{\sqrt2}{\log(\sqrt2+1)}]$.
\end{lemma}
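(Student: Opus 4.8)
\textbf{Proof proposal for Lemma \ref{le1}.}

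The plan is to treat parts (1) and (2) separately, with part (1) being essentially a warm-up that feeds into part (2). For part (1), I would write $h_1(r) = f_1(r')$ where $f_1(s) = s/\arth s$ is the function already analysed in Lemma \ref{lecr}(1) (the case $c=1$). There it was shown that $f_1$ is strictly decreasing and concave on $(0,1)$ with range $(0,1)$ (boundary values $f_1(0^+)=1$, $f_1(1^-)=0$). Since $r\mapsto r'=\sqrt{1-r^2}$ is strictly decreasing on $(0,1)$, the composition $h_1(r)=f_1(r')$ is strictly increasing, and its range is again $(0,1)$ (with $h_1(0^+)=0$, $h_1(1^-)=1$). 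For concavity I would compute $h_1''$ directly: with $s=r'$ one has $h_1'(r) = -\frac{r}{r'}f_1'(r')$ and differentiating again, $h_1''(r) = -\frac{1}{r'^3}f_1'(r') + \frac{r^2}{r'^2}f_1''(r')$. Since $f_1'<0$ and $f_1''<0$ on $(0,1)$, both terms are negative, so $h_1''<0$ and $h_1$ is concave. (Alternatively one can argue: $f_1$ decreasing and concave, and $r\mapsto r'$ is itself concave and decreasing on $(0,1)$, but one must be slightly careful here since composing a concave decreasing function with a concave decreasing function is not automatically concave — so the direct second-derivative computation above is the safe route.)

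For part (2), the function is $h(r) = f_1(r) + f_1(r') = f_1(r) + h_1(r)$, which is manifestly symmetric under $r\leftrightarrow r'$. The monotonicity claim follows from this symmetry together with concavity: a concave function on $(0,1)$ that is symmetric about $r=\tfrac{\sqrt2}{2}$ (the fixed point of $r\leftrightarrow r'$) is automatically increasing on $(0,\tfrac{\sqrt2}{2}]$ and decreasing on $[\tfrac{\sqrt2}{2},1)$, with maximum at $\tfrac{\sqrt2}{2}$. So the heart of part (2) is concavity of $h$. I would get this for free from part (1) and Lemma \ref{lecr}(1): $h = f_1 + h_1$ is a sum of two concave functions (the concavity of $f_1$ is Lemma \ref{lecr}(1), the concavity of $h_1$ is part (1) just proved), hence concave; and strict concavity holds since, e.g., $f_1$ is strictly concave. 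For the range: $h$ is strictly increasing on $(0,\tfrac{\sqrt2}{2}]$ from $h(0^+)$ up to $h(\tfrac{\sqrt2}{2})$. The endpoint value is $h(0^+) = f_1(0^+) + f_1(1^-) = 1 + 0 = 1$, not attained; the maximum is $h(\tfrac{\sqrt2}{2}) = 2 f_1(\tfrac{\sqrt2}{2}) = 2\cdot\frac{\sqrt2/2}{\arth(\sqrt2/2)} = \frac{\sqrt2}{\arth(\sqrt2/2)} = \frac{\sqrt2}{\log(\sqrt2+1)}$, using $\arth(\sqrt2/2) = \tfrac12\log\frac{1+\sqrt2/2}{1-\sqrt2/2} = \tfrac12\log\frac{2+\sqrt2}{2-\sqrt2} = \tfrac12\log(3+2\sqrt2) = \log(1+\sqrt2)$. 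This gives the range $\bigl(1,\frac{\sqrt2}{\log(\sqrt2+1)}\bigr]$ as claimed.

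The main obstacle is the concavity of $h_1$ in part (1): one cannot simply invoke "composition of concave functions" because the inner map $r\mapsto r'$ is decreasing, so one genuinely has to run the second-derivative computation and check the sign. Everything after that — monotonicity of $h$, its range, and the explicit constant $\frac{\sqrt2}{\log(\sqrt2+1)}$ — is then routine, reduced to bookkeeping with the already-established properties of $f_1$ from Lemma \ref{lecr}(1) and the symmetry $r\leftrightarrow r'$. A minor point to be careful about is confirming that $h_1''<0$ on all of $(0,1)$ and not just near the endpoints; since $f_1'$ and $f_1''$ are both negative throughout $(0,1)$ (established in Lemma \ref{lecr}(1)), the two-term expression for $h_1''$ is negative on the whole interval, so no delicate estimation is needed.
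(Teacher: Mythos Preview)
Your approach to part (2) is clean and essentially matches the paper's: $h=f_1+h_1$ is a sum of concave functions (by Lemma \ref{lecr}(1) and part (1)), hence concave, and the symmetry $r\leftrightarrow r'$ then forces the maximum at $r=\tfrac{\sqrt2}{2}$, giving the monotonicity and range.

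The gap is in part (1), where your sign analysis for $h_1''$ is wrong. Your formula
\[
h_1''(r) \;=\; -\frac{1}{r'^3}\,f_1'(r') \;+\; \frac{r^2}{r'^2}\,f_1''(r')
\]
is correct, but since $f_1'<0$ the \emph{first} term $-\frac{1}{r'^3}f_1'(r')$ is \emph{positive}, not negative. So $h_1''$ is a sum of a positive term and a negative term, and its sign is not immediate from $f_1'<0$, $f_1''<0$ alone. This is exactly the obstruction you already flagged when you rejected the ``composition of concave decreasing with concave decreasing'' route: $(\phi\circ g)'' = \phi''(g)(g')^2 + \phi'(g)g''$ has one negative and one positive term when $\phi',\phi'',g''<0$. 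Your ``safe'' direct computation is the same identity in disguise and runs into the same problem.

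The paper resolves this by writing $h_1''$ out explicitly,
\[
h_1''(r)\,r^2 r'^3(\arth r')^3 \;=\; 2r'^2 - r'\arth r' - r^2(\arth r')^2 \;\equiv\; \psi_1(r'),
\]
and then showing $\psi_1(s)<0$ on $(0,1)$ via a further differentiation and a power-series expansion of $\arth$. So the concavity of $h_1$ needs genuine additional work that your proposal skips; once that step is supplied, the remainder of your argument (including part (2)) is fine.
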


\begin{proof}

(1) The monotonicity and the limiting values of $h_1$ can be easily obtained  by Lemma \ref{lecr}(1).

Now we prove the concavity of $h_1$. By differentiation,
$$h'_1(r)=\frac{r'-r^2\arth\,r'}{rr'(\arth\,r')^2}$$
and
\begin{eqnarray}\label{le13}
h''_1(r)r^2r'^3(\arth\,r')^3=2r'^2-r'\arth\,r'-r^2(\arth\,r')^2\equiv\psi_1(r').
\end{eqnarray}
Then $\psi_1(r)=2r^2-r\arth\,r-r'^2(\arth\,r)^2$ and by differentiation
\begin{eqnarray*}
\psi'_1(r)=r\left(4-\psi_2(r)\right),
\end{eqnarray*}
where $\psi_2(r)=\frac{1}{r'^2}+3\frac{\arth\,r}{r}-2(\arth\,r)^2$.
Since
\begin{eqnarray*}
\arth\,r=\frac12\log\frac{1+r}{1-r}=\sum_{n=0}^{\infty}\frac{r^{2n+1}}{2n+1},
\end{eqnarray*}
we have
\begin{eqnarray*}
r^2r'^4\psi'_2(r)
&=&(r^4+2r^2-3)\arth\,r-r^3+3r\nonumber\\
&=&\sum_{n=0}^{\infty}\frac{r^{2n+5}}{2n+1}+2\sum_{n=1}^{\infty}\frac{r^{2n+3}}{2n+1}-3\sum_{n=2}^{\infty}\frac{r^{2n+1}}{2n+1}\\
&=&\sum_{n=2}^{\infty}\frac{16(n-1)}{(2n-3)(2n-1)(2n+1)}r^{2n+1}>0.
\end{eqnarray*}
Hence $\psi_2$ is increasing with $\psi_2(0^+)=4$ and  $\psi_1$ is decreasing with $\psi_1(0^+)=0$. Therefore by (\ref{le13}), $h''_1$ is negative  and $h'_1$ is decreasing. Then $h_1$ is concave on $(0,1)$.

\medskip
(2) By differentiation,
$$h'(r)=f'_1(r)+h'_1(r)=f'_1(r)-\frac{r}{r'}f_1'(r'),$$
where $f_1(r)=\frac{r}{\arth\,r}=h_1(r')$.
It is easy to see that $h'(\frac{\sqrt2}{2})=0$. Therefore, $h$ is strictly increasing on $(0,\frac{\sqrt 2}{2}]$ and strictly decreasing on  $[\frac{\sqrt 2}{2},1)$. By (1) and Lemma \ref{lecr}(1), $h'$ is strictly decreasing and $h(0^+)=h(1^-)=1$. Hence $h$ is concave and
$$1<h(r)\leq h({\sqrt2}/{2})=\frac{\sqrt2}{\log(\sqrt2+1)}.$$

This completes the proof.
\end{proof}

\begin{lemma}\label{le2}
Let $p\in\R$, $r\in(0,1)$, and $C=1-\frac{\log(\sqrt{2}+1)}{\sqrt2}\approx 0.376775 $. Let
$$g(r)=\frac{r}{r'}\left(\frac{\arth\,r}{\arth\,r'}\right)^{p-1}.$$

(1) $g$ is strictly decreasing if $p\leq 0$ and strictly increasing if $p\geq C$.

(2) If $p\in(0,C)$, then there exists exactly one point $r_0\in(0,\frac{\sqrt2}{2})$ such that $g$ is increasing on $(0,r_0)$, $(r'_0,1)$ and decreasing on $(r_0,r'_0)$.

(3) If $p\in(0,C)$, then $g(0^+)=0$ and $g(1^-)=\infty$.
\end{lemma}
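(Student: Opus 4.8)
The plan is to reduce all three parts to a single sign analysis of $g'$, exploiting two facts: the logarithmic derivative of $g$ is governed exactly by the function $h$ of Lemma \ref{le1}(2), and $g$ has the symmetry $g(r)g(r')=1$ (immediate from the definition and $(r')'=r$), which gives $g(\sqrt2/2)=1$ and makes the behaviour of $g$ near $r=1$ reciprocal to that near $r=0$.

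Writing $\log g(r)=\log r-\log r'+(p-1)(\log\arth r-\log\arth r')$ and using $\frac{d}{dr}\arth r=1/r'^2$ and $\frac{d}{dr}\arth r'=-1/(rr')$, a short computation yields
\[
r\,r'^2\,\frac{g'(r)}{g(r)}=1+(p-1)h(r),\qquad h(r)=\frac{r}{\arth r}+\frac{r'}{\arth r'}.
\]
Since $g>0$ and $rr'^2>0$ on $(0,1)$, the sign of $g'(r)$ equals the sign of $\Phi_p(r):=1+(p-1)h(r)$. By Lemma \ref{le1}(2), $h$ is continuous, strictly increasing on $(0,\sqrt2/2]$ and strictly decreasing on $[\sqrt2/2,1)$, with $h(0^+)=h(1^-)=1$, maximum $h(\sqrt2/2)=\frac{\sqrt2}{\log(\sqrt2+1)}=:h_{\max}$, and $h(r')=h(r)$; hence $h$ takes values in $(1,h_{\max}]$. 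Note also $1-C=\frac{\log(\sqrt2+1)}{\sqrt2}=1/h_{\max}$, so $C$ is precisely the value of $p$ for which $\frac1{1-p}=h_{\max}$.

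For part (1): if $p\ge1$ then $\Phi_p\ge1>0$; if $C\le p<1$ then $p-1<0$ and $h\le h_{\max}$ give $\Phi_p(r)\ge1-(1-p)h_{\max}\ge1-(1-C)h_{\max}=0$, with equality only at $r=\sqrt2/2$ and only when $p=C$, so $g$ is strictly increasing; if $p\le0$ then $1-p\ge1$, hence $h(r)>1\ge\frac1{1-p}$ and $\Phi_p(r)<0$, so $g$ is strictly decreasing. For part (2): if $0<p<C$ then $\frac1{1-p}\in(1,h_{\max})$, so the shape of $h$ forces the equation $h(r)=\frac1{1-p}$ to have exactly two roots, $r_0\in(0,\sqrt2/2)$ and, by $h(r')=h(r)$, $r'_0\in(\sqrt2/2,1)$; then $\Phi_p>0$ on $(0,r_0)\cup(r'_0,1)$ and $\Phi_p<0$ on $(r_0,r'_0)$, which is the asserted monotonicity. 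For part (3): as $r\to0^+$ we have $\arth r\sim r$, $r'\to1$, and $\arth r'=\frac12\log\frac{1+r'}{1-r'}\sim\log(2/r)\to\infty$, so $g(r)\sim r\bigl(r/\log(2/r)\bigr)^{p-1}=r^p(\log(2/r))^{1-p}\to0$ because $p>0$; hence $g(0^+)=0$, and $g(1^-)=\infty$ by the symmetry above.

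The derivative identity and the asymptotic estimate are routine; the only point requiring care is the boundary case $p=C$ in part (1), where $\Phi_p$ vanishes at the single point $r=\sqrt2/2$, so one must observe that a nonnegative derivative with an isolated zero still yields strict monotonicity. Apart from that, everything follows directly from Lemma \ref{le1}(2).
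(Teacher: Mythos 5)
Your proof is correct and follows essentially the same route as the paper: logarithmic differentiation reduces the sign of $g'$ to comparing $p$ with $1-\frac{1}{h(r)}$ (your identity $rr'^2\,g'/g=1+(p-1)h(r)$ is the paper's identity up to the positive factor $h(r)$), and everything then rests on Lemma \ref{le1}(2). The only differences are cosmetic improvements: you treat the boundary case $p=C$ (isolated zero of $g'$ at $r=\tfrac{\sqrt2}{2}$) explicitly, and you get $g(1^-)=\infty$ from the symmetry $g(r)g(r')=1$ instead of the paper's direct limit computation.
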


\begin{proof}
(1) By logarithmic differentiation,
\begin{eqnarray*}
\frac{rr'^2\arth\,r\arth\,r'}{r\arth\,r'+r'\arth\,r}\cdot\frac{g'(r)}{g(r)}&=&p-1+\frac{\arth\,r\arth\,r'}{r\arth\,r'+r'\arth\,r}\\
&=&p-\left(1-\frac{1}{h(r)}\right),
\end{eqnarray*}
where $h(r)$ is as in Lemma \ref{le1}(2). Since
$$0<1-\frac{1}{h(r)}\leq C,$$
we see that
$g$ is strictly increasing if $p\geq C$ and decreasing if $p\leq 0$.

\medskip
(2) If $p\in(0,C)$, then there exists exactly one point $r_0\in(0,\frac{\sqrt2}{2})$ such that $g'(r_0)=g'(r'_0)=0$ because $1-\frac{1}{h(r)}$ is increasing on $(0,\frac{\sqrt2}{2})$ and decreasing on $(\frac{\sqrt2}{2},1)$ by Lemma \ref{le1}(2). Therefore, $g'>0$ if $r\in(0,r_0)\cup(r'_0,1)$ and $g'<0$ if $r\in(r_0,r'_0)$.  Hence $g$ is increasing on $(0,r_0)$, $(r'_0,1)$ and decreasing on $(r_0,r'_0)$.

\medskip
(3) Since $0<p<C<1$ and
\begin{eqnarray*}
\lim\limits_{r\rightarrow 0^+}\frac{(\arth\,r')^{1-p}}{r^{-p}}
=\lim\limits_{r\rightarrow 0^+}\frac{1-p}{p}\cdot\frac{r^p}{r'(\arth\,r')^p}
=0,
\end{eqnarray*}
we have
\begin{eqnarray*}
\lim\limits_{r\rightarrow 0^+}g(r)=\lim\limits_{r\rightarrow 0^+}\left(\frac{r}{\arth\,r}\right)^{1-p}\cdot\left(\frac{1}{r'}\right)\cdot\frac{(\arth\,r')^{1-p}}{r^{-p}}=0
\end{eqnarray*}
and
\begin{eqnarray*}
\lim\limits_{r\rightarrow 1^-}g(r)=\lim\limits_{r\rightarrow 1^-}\left(\frac{\arth\,r'}{r'}\right)^{1-p}\cdot r\cdot\frac{r'^{-p}}{(\arth\,r)^{1-p}}=\infty.
\end{eqnarray*}
This completes the proof.
\end{proof}

\begin{theorem}\label{th1}
Let $C$ be as in Lemma \ref{le2}. Then for all $r\in(0,1)$,
\begin{eqnarray}\label{th11}
H_p(\arth\,r,\arth\,r')\leq \arth\left(\frac{\sqrt2}{2}\right)
\end{eqnarray}
holds if and only if $p\leq 0$, and
\begin{eqnarray}\label{th12}
H_p(\arth\,r,\arth\,r')\geq \arth\left(\frac{\sqrt2}{2}\right)
\end{eqnarray}
holds if and only if $p\geq C$.
The equalities hold if and only if $r=r'=\frac{\sqrt2}{2}$ and all inequalities are sharp in both cases.
\end{theorem}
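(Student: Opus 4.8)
The plan is to reduce Theorem~\ref{th1} to the mean-value analysis already set up in Lemma~\ref{le2} and Lemma~\ref{le1}(2). Write $\phi(r) = H_p(\arth r, \arth r')$. Since $H_p$ is symmetric in its two arguments and $r \mapsto r'$ is an involution fixing $\tfrac{\sqrt2}{2}$, the function $\phi$ satisfies $\phi(r) = \phi(r')$, so it suffices to understand $\phi$ on $(0, \tfrac{\sqrt2}{2}]$ and check the boundary behaviour. At $r = \tfrac{\sqrt2}{2}$ we have $\arth r = \arth r' = \arth\tfrac{\sqrt2}{2}$, hence $\phi(\tfrac{\sqrt2}{2}) = \arth\tfrac{\sqrt2}{2}$ regardless of $p$; this is the candidate extremal value and explains why equality forces $r = r' = \tfrac{\sqrt2}{2}$.

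First I would treat the case $p \neq 0$ by differentiating. A direct computation (logarithmic differentiation of $H_p(a,b)^p = \tfrac12(a^p+b^p)$ with $a = \arth r$, $b = \arth r'$) gives
\begin{eqnarray*}
\phi'(r) \;=\; \frac{\phi(r)^{1-p}}{2}\left( \frac{(\arth r)^{p-1}}{r'} - \frac{(\arth r')^{p-1}}{r} \right),
\end{eqnarray*}
using $\tfrac{d}{dr}\arth r = 1/r'^2$ and $\tfrac{d}{dr}\arth r' = -1/(r r')$. Hence the sign of $\phi'(r)$ is the sign of
\begin{eqnarray*}
\frac{r}{r'}\left(\frac{\arth r}{\arth r'}\right)^{p-1} - 1 \;=\; g(r) - 1,
\end{eqnarray*}
where $g$ is exactly the function of Lemma~\ref{le2}. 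Note $g(\tfrac{\sqrt2}{2}) = 1$. Now invoke Lemma~\ref{le2}: if $p \le 0$ then $g$ is strictly decreasing, so $g(r) > 1$ on $(0,\tfrac{\sqrt2}{2})$ and $g(r) < 1$ on $(\tfrac{\sqrt2}{2},1)$, making $\phi$ strictly increasing then strictly decreasing, with a strict global maximum $\arth\tfrac{\sqrt2}{2}$ at $\tfrac{\sqrt2}{2}$; this proves \eqref{th11}. If $p \ge C$ then $g$ is strictly increasing, so the inequalities reverse, $\phi$ has a strict global minimum at $\tfrac{\sqrt2}{2}$, and \eqref{th12} follows. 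For $0 < p < C$, Lemma~\ref{le2}(2),(3) shows $g$ dips below $1$ on $(r_0, \tfrac{\sqrt2}{2})$ yet $g(0^+) = 0$, so $\phi$ is \emph{not} monotone near the ends: near $r = 0$ one has $\phi' < 0$ while $\phi(0^+) = +\infty$ (since $\arth r' \to \infty$), so $\phi$ exceeds $\arth\tfrac{\sqrt2}{2}$ for small $r$, violating \eqref{th12}; and on $(r_0,\tfrac{\sqrt2}{2})$ the sign pattern of $g-1$ forces $\phi$ to exceed its value at $\tfrac{\sqrt2}{2}$ from one side and fall below from the other — more carefully, $g(r)<1$ just left of $\tfrac{\sqrt2}{2}$ means $\phi$ is decreasing there, so $\phi(r) > \arth\tfrac{\sqrt2}{2}$ for $r$ slightly below $\tfrac{\sqrt2}{2}$, contradicting \eqref{th11}. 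Thus both inequalities fail for $0 < p < C$. The case $p = 0$ is the geometric mean $\phi(r) = \sqrt{\arth r \,\arth r'} = \sqrt{F_1(r)}$, and \eqref{th11} is immediate from Lemma~\ref{lecr}(2) with $c=1$; since $p = 0 \le 0$ it falls under the first claim anyway, and one checks $p=0 < C$ means \eqref{th12} fails (again via the blow-up $\phi(0^+)=\infty$).

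The main obstacle, I expect, is not any single computation but organizing the ``only if'' directions cleanly: one must exhibit, for every $p \in (0,C)$ and for the borderline comparison, an explicit $r$ witnessing the failure of the inequality, and the natural witnesses are $r \to 0^+$ (where $\phi \to \infty$ kills \eqref{th12} for all $p < C$) and $r$ slightly below $\tfrac{\sqrt2}{2}$ (where the local decrease of $\phi$, guaranteed by $g(r)<1$ from Lemma~\ref{le2}(2), kills \eqref{th11} for all $p > 0$). Combining these, \eqref{th11} holds iff $p \le 0$ and \eqref{th12} holds iff $p \ge C$, with equality precisely at $r = r' = \tfrac{\sqrt2}{2}$ because all the relevant monotonicities are strict. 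A secondary point worth stating explicitly is the continuity/monotonicity of $p \mapsto H_p$, which guarantees the threshold structure is consistent (if \eqref{th12} holds at some $p_1$ it holds at all $p \ge p_1$), though the differentiation argument above already yields the sharp thresholds directly.
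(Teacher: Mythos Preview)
Your approach coincides with the paper's: differentiate $\log H_p(\arth r,\arth r')$, recognise the sign of the derivative as that of $g(r)-1$ with $g$ from Lemma~\ref{le2}, and invoke that lemma case by case. A few slips to fix, none fatal to the method. Your displayed formula for $\phi'$ is off by a positive factor $1/r'$, which does not affect the sign analysis. More importantly, in the range $0<p<C$ you have the behaviour of $g$ near $\tfrac{\sqrt2}{2}$ backwards: by Lemma~\ref{le2}(2), $g$ is \emph{decreasing} on $(r_0,r_0')\ni\tfrac{\sqrt2}{2}$, so $g(r)>1$ just to the left of $\tfrac{\sqrt2}{2}$ and $\phi$ is increasing there, giving $\phi(r)<\arth\tfrac{\sqrt2}{2}$; this refutes \eqref{th12}, not \eqref{th11}. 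It is the blow-up $\phi(0^+)=\infty$ (valid for $p>0$) that refutes \eqref{th11}. Finally, for $p=0$ the geometric mean satisfies $\phi(0^+)=0$, not $\infty$, since $\arth r\cdot\arth r'\sim -r\log r\to 0$; the failure of \eqref{th12} at $p=0$ follows instead from the strict maximum at $\tfrac{\sqrt2}{2}$ already established for $p\le 0$. With these sign and limit corrections your argument is exactly the paper's.
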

\begin{proof}
We immediately obtain the inequality (\ref{th11}) if $p=0$ by Lemma \ref{lecr}(2). Therefore, it suffices to discuss the case $p\neq 0$.

Let
$$f(r)=\frac 1p\log\frac{(\arth\,r)^p+(\arth\,r')^p}{2}\,,\,\,p\neq 0.$$
By differentiation,
\begin{eqnarray*}
f'(r)=\frac{(\arth\,r')^{p-1}}{rr'\left((\arth\,r)^p+(\arth\,r')^p\right)}\left(g(r)-1\right),
\end{eqnarray*}
where $g(r)$ is as in Lemma \ref{le2} and it is easy to see that $g(\frac{\sqrt2}{2})=1$.

{\it Case 1.} $p<0$.
$f$ is strictly increasing on $(0,\frac{\sqrt 2}{2})$ and strictly decreasing on $(\frac{\sqrt 2}{2},1)$ by Lemma \ref{le2}(1).
Therefore,  $f(r)\leq f(\frac{\sqrt2}{2})$ for all $p<0$.

{\it Case 2.} $p\geq C$.
By Lemma \ref{le2}(1), $f$ is strictly decreasing on $(0,\frac{\sqrt 2}{2})$ and strictly increasing on $(\frac{\sqrt 2}{2},1)$, and hence $f(r)\geq f(\frac{\sqrt2}{2})$.

{\it Case 3.} $p\in(0,C)$.
By Lemma \ref{le2}(2), there exists exactly one point $r_1\in(0,r_0)$ such that $g(r_1)=g(r'_1)=1$, where $r_0\in(0,\frac{\sqrt2}{2})$ is as in Lemma \ref{le2}(2). Then $f$ is strictly decreasing on $(0,r_1)$, $(\frac{\sqrt2}{2},r'_1)$ and strictly increasing on $(r_1,\frac{\sqrt2}{2})$ , $(r'_1,1)$. Thus,
$$f(r_1)=f(r'_1)<f({\sqrt2}/{2}).$$
Since $f(0^+)=f(1^-)=\infty$, there exists $r_2\in(0,r_1)\cup(r'_1,1)$  such that
$$f({\sqrt2}/{2})<f(r_2).$$
Therefore, neither (\ref{th11}) nor (\ref{th12}) holds for all $r\in(0,1)$.

This completes the proof of Theorem \ref{th1}.
\end{proof}

\begin{lemma}\label{t1l1}
Let $r\in(0,1)$.

(1) The function $f(r)\equiv\frac{r'^4\arth\,r-r(1+r^2)}{r'^2\left((1+r^2)\arth\, r-r\right)}$ is strictly decreasing with range $(-\infty, -2)$.

(2) For $p \in \mathbb{R}$ define
$$h_p(r)\equiv1+p r'^2\frac{\arth\, r}{r}-(1+r^2)\frac{\arth\, r}{r}.$$

(i) If $p\geq-2$, then the range of $h_p$ is $(-\infty,p)$.

(ii) If $p<-2$, then the range of $h_p$ is $(-\infty, C(p)]$,
where $C(p)\equiv\sup\limits_{0<r<1}{h_p(r)}\in (p\,,-1)$ with $\lim\limits_{p\to-2}C(p)=-2$ and $\lim\limits_{p\to-\infty}C(p)=-\infty$.
\end{lemma}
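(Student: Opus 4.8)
The plan is to handle part (1) first, since part (2) reduces to it. For part (1), write $f(r) = p(r)/q(r)$ where $p(r) = r'^4\arth r - r(1+r^2)$ and $q(r) = r'^2((1+r^2)\arth r - r)$, with $r'^2 = 1-r^2$. I would first check the boundary behavior: as $r\to 0^+$, both numerator and denominator vanish to the same order (expanding $\arth r = r + r^3/3 + \cdots$ one finds $p(r) \sim -\tfrac{8}{3}r^3$ and $q(r)\sim \tfrac{4}{3}r^3$, giving $f(0^+) = -2$), and as $r\to 1^-$, $q(r)\to 0^+$ while $p(r)\to 1\cdot 0 - 2 = -2 < 0$, so $f(1^-) = -\infty$. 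Thus the claimed range $(-\infty,-2)$ is consistent with strict monotonicity, and it remains to prove $f$ is strictly decreasing. The natural tool is Lemma \ref{lhr} (the monotone l'Hôpital rule), but a direct application is awkward because $q$ is not of the form $g(r)-g(0)$ with $g'$ nonvanishing in an obvious way; instead I would peel off the factor $r'^2$ and the common structure. A cleaner route: set $u(r) = r'^4\arth r - r(1+r^2)$ and $v(r) = r'^2((1+r^2)\arth r - r)$, compute $u'(r)$ and $v'(r)$ explicitly (each is elementary, with $(\arth r)' = 1/r'^2$), and show $u'/v'$ is monotone, then invoke Lemma \ref{lhr} with the base point $a = 0$ after verifying $u(0) = v(0) = 0$. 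I expect $u'/v'$ to again be a ratio requiring either a second application of Lemma \ref{lhr} or a power-series sign argument of the type used in Lemma \ref{le1}(1), where one writes the relevant combination of $\arth r$ and polynomials as a power series with manifestly positive (or negative) coefficients. This power-series positivity computation is the main obstacle: getting the telescoping of coefficients to come out with a definite sign, analogous to the $\frac{16(n-1)}{(2n-3)(2n-1)(2n+1)}$ identity in Lemma \ref{le1}.

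For part (2), fix the observation $h_p(r) = 1 + p\,r'^2\frac{\arth r}{r} - (1+r^2)\frac{\arth r}{r}$, which is \emph{linear and increasing in $p$} for each fixed $r$ (since $r'^2\arth r/r > 0$). Differentiating in $r$, $h_p'(r) = 0$ is equivalent, after clearing denominators, to $p = f(r)$ with $f$ the function from part (1): indeed $\frac{d}{dr}\bigl(r'^2\frac{\arth r}{r}\bigr)$ and $\frac{d}{dr}\bigl((1+r^2)\frac{\arth r}{r}\bigr)$ are both elementary, and collecting terms in the equation $p\cdot\frac{d}{dr}\bigl(r'^2\frac{\arth r}{r}\bigr) = \frac{d}{dr}\bigl((1+r^2)\frac{\arth r}{r}\bigr)$ gives exactly $p = f(r)$. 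Since by part (1) $f$ is a strictly decreasing bijection of $(0,1)$ onto $(-\infty,-2)$, we get: if $p \geq -2$ then $h_p'(r)$ never vanishes on $(0,1)$, so $h_p$ is strictly monotone; checking the endpoint values $h_p(0^+) = 1 + p - (1) \cdot 1 \cdot$ — more precisely $\frac{\arth r}{r} \to 1$ as $r\to 0^+$, so $h_p(0^+) = 1 + p\cdot 1 - 1\cdot 1 = p$, while $h_p(1^-) = -\infty$ because $\frac{\arth r}{r}\to+\infty$ and the coefficient $r'^2\cdot p - (1+r^2) \to -2 < 0$ — shows $h_p$ decreases from $p$ to $-\infty$, giving range $(-\infty,p)$, which is assertion (2)(i).

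For (2)(ii), when $p < -2$ the equation $f(r) = p$ has a unique solution $r_p \in (0,1)$, and since $f$ is decreasing, $h_p'(r) > 0$ for $r < r_p$ and $h_p'(r) < 0$ for $r > r_p$ (one must check the sign of $h_p'$ just to one side, e.g. near $0$, using $h_p(0^+) = p$ and the fact that $h_p$ must eventually reach $-\infty < p$); hence $h_p$ attains a strict global maximum $C(p) = h_p(r_p) = \sup_{0<r<1} h_p(r)$, and the range is $(-\infty, C(p)]$ with both endpoints $h_p(0^+)=p$ and $h_p(1^-)=-\infty$ lying below $C(p)$, so in particular $C(p) > p$. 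The upper bound $C(p) < -1$ I would get by substituting the defining relation $p = f(r_p)$ back into $h_p(r_p)$: this expresses $C(p)$ as an explicit function of $r_p$ alone, say $C(p) = \Phi(r_p)$, and one checks $\Phi(r) < -1$ for all $r\in(0,1)$ by another elementary monotonicity/series argument, together with the limits $\Phi(0^+) = -2$ (matching $\lim_{p\to -2^-}$, since $r_p \to 0$ as $p\to -2$) and $\Phi(1^-) = -\infty$ (matching $\lim_{p\to-\infty}$, since $r_p\to 1$ as $p\to-\infty$). The delicate point here is confirming that the supremum is interior and that $C(p)$ stays strictly below $-1$; I expect the cleanest argument is the substitution $C(p) = \Phi(r_p)$ followed by bounding $\Phi$, reusing the sign computations from part (1).
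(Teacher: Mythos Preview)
Your overall strategy matches the paper's: prove (1) via the monotone l'H\^opital rule, then deduce (2) from the factorization $h_p'(r)=\tfrac{1}{r}\bigl((1+r^2)\tfrac{\arth r}{r}-1\bigr)\bigl(f(r)-p\bigr)$, exactly as the paper does. There are, however, two places where your concrete plan goes astray.

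In part (1), your ``cleaner route'' of applying Lemma~\ref{lhr} directly to $u(r)=r'^4\arth r-r(1+r^2)$ and $v(r)=r'^2\bigl((1+r^2)\arth r-r\bigr)$ fails: one computes $v'(r)=4r^2(1-r\,\arth r)$, which changes sign on $(0,1)$ (it vanishes where $r\,\arth r=1$, near $r\approx 0.83$), so the hypothesis $g'\neq 0$ of Lemma~\ref{lhr} is violated and $u'/v'$ is not even defined there. The fix is precisely the move you mentioned first and then abandoned: peel off the factor $r'^2$ and write $f=f_1/f_2$ with $f_1(r)=r'^2\arth r-r(1+r^2)/r'^2$ and $f_2(r)=(1+r^2)\arth r-r$. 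Now $f_1(0^+)=f_2(0^+)=0$ and $f_2'(r)=2r\,\arth r+2r^2/r'^2>0$ throughout. The paper then computes
\[
\frac{f_1'}{f_2'}=-1-\frac{2}{\,r'^2\bigl(1+f_3/f_4\bigr)\,},\qquad f_3=\arth r,\quad f_4=\frac{r}{r'^2},
\]
and a second application of Lemma~\ref{lhr} to $f_3/f_4$ (since $f_3'/f_4'=r'^2/(1+r^2)$ is visibly decreasing) finishes the monotonicity. No power-series computation is needed; the obstacle you anticipated dissolves once the decomposition is chosen correctly.

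In part (2)(ii), your plan to substitute $p=f(r_p)$ back into $h_p(r_p)$ and bound the resulting function $\Phi(r_p)$ would work in principle, but the paper's argument is much shorter: simply rewrite
\[
h_p(r)=-1+2\Bigl(1-\frac{\arth r}{r}\Bigr)+(p+1)\,r'^2\,\frac{\arth r}{r}
\]
and observe that for $p<-2$ both correction terms are negative (since $\arth r>r$ and $p+1<0$), giving $h_p(r)<-1$ for every $r\in(0,1)$, hence $C(p)<-1$ directly. The limits $C(p)\to -2$ and $C(p)\to -\infty$ then follow, as you say, from $r_p\to 0^+$ and $r_p\to 1^-$.
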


\begin{proof}
(1) Let $f_1(r)= r'^2\arth r-\frac{r(1+r^2)}{r'^2}$ and $f_2(r)=(1+r^2)\arth r-r$, then $f_1(0^+)=f_2(0^+)=0$.
By differentiation, we have
$$
\frac{f'_1(r)}{f'_2(r)}=-1-\frac{2r}{r'^2(r+r'^2\arth\,r)}=-1-\frac{2}{r'^2\left(1+\frac{f_3(r)}{f_4(r)}\right)},
$$
where $f_3(r)=\arth\,r$ and $f_4(r)=\frac{r}{r'^2}$. It is easy to see that $f_3(0^+)=f_4(0^+)=0$, then
$$
\frac{f'_3(r)}{f'_4(r)}=\frac{r'^2}{1+r^2},
$$
which is strictly decreasing. Hence by Lemma \ref{lhr}, $\frac{f'_1(r)}{f'_2(r)}$ is strictly decreasing and so is $f$ with $f(0^+)=-2$. Since $(1+r^2)\arth r-r>0$ and by l'H$\rm \hat{o}$pital's Rule,
\begin{eqnarray}\label{r'arth}
\lim_{r\rightarrow1^-}\frac{\arth r}{r'^{-2}}=\lim_{r\rightarrow1^-}\frac{r'^2}{2r}=0,
\end{eqnarray}
we get $f(1^-)=-\infty$.

\medskip
(2) By Lemma \ref{le1}(1)  and (\ref{r'arth}), it is easy to see that $h_p(0^+)=p$ and $h_p(1^-)=-\infty$.

Next by differentiation, we have
$$
h'_p(r)=\frac 1r\left((1+r^2)\frac{\arth\, r}{r}-1\right)\left(f(r)-p\right),
$$
where $f(r)$ is as in (1).

If $p\geq-2$, and by (1), we see that $p>f(r)$, which implies that  $h_p$ is strictly decreasing and hence $h_p(r)<p$.

If $p<-2$, since the range of $f$ is $(-\infty,-2)$, we see that
there exists exactly one point $r_0\in(0,1)$ such that $p=f(r_0)$. Then $h_p$ is increasing on $(0, r_0)$ and decreasing on $(r_0, 1)$. Since
$$
h_p(r)=-1+2\left(1-\frac{\arth\,r}{r}\right)+(p+1)r'^2\frac{\arth\,r}{r}<-1,
$$
by the continuity of $h_p$, there is a continuous function
$$C(p)\equiv\sup\limits_{0<r<1}{h_p(r)}$$
with $p<C(p)<-1$,\,$\lim\limits_{p\to-2}C(p)=-2$ and $\lim\limits_{p\to-\infty}C(p)=-\infty$.
\end{proof}

\begin{lemma}\label{t1l2}
Let $p\,,q\in\mathbb{R}$, $r\in(0,1)$, and let $C(p)$ be as in Lemma \ref{t1l1}(2). Let
$$g_{p,q}(r)\equiv\frac{\arth^{q-1}r}{r^{p-1}r'^2}.$$
(1) If $p\geq-2$, then $g_{p,q}$ is strictly increasing for each $q\geq{p}$, and $g_{p,q}$ is not monotone for any $q<p$.\\
(2) If $p<-2$, then $g_{p,q}$ is strictly increasing for each $q\geq{C(p)}$, and $g_{p,q}$ is not monotone for any $q<C(p)$.
\end{lemma}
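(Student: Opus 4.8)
The plan is to reduce everything to the function $h_p$ already analysed in Lemma~\ref{t1l1}(2), via a logarithmic–derivative computation. First I would note that $g_{p,q}(r)>0$ on $(0,1)$ and differentiate logarithmically:
$$\frac{g'_{p,q}(r)}{g_{p,q}(r)}=\frac{q-1}{r'^2\arth r}+\frac{1-p}{r}+\frac{2r}{r'^2}.$$
Multiplying through by $r r'^2\arth r>0$ and using the elementary identity $2r^2\arth r=(1+r^2)\arth r-r'^2\arth r$ (which holds because $1+r^2-r'^2=2r^2$), the right-hand side collapses to $(q-1)r+(1+r^2)\arth r-p\,r'^2\arth r$. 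Next I would bring in the definition $h_p(r)=1+p r'^2\frac{\arth r}{r}-(1+r^2)\frac{\arth r}{r}$; multiplying it by $r$ and rearranging gives $(1+r^2)\arth r-p\,r'^2\arth r=r\bigl(1-h_p(r)\bigr)$, so that
$$r r'^2\arth r\cdot\frac{g'_{p,q}(r)}{g_{p,q}(r)}=(q-1)r+r\bigl(1-h_p(r)\bigr)=r\bigl(q-h_p(r)\bigr).$$
Since $r r'^2\arth r>0$ and $g_{p,q}(r)>0$ on $(0,1)$, this shows that $g'_{p,q}(r)$ has the same sign as $q-h_p(r)$ for every $r\in(0,1)$. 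Setting up this identity correctly is the only genuine computation in the proof.

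With the sign of $g'_{p,q}$ pinned to the sign of $q-h_p(r)$, both parts follow from the description of $h_p$ in Lemma~\ref{t1l1}(2). For $p\geq-2$, $h_p$ is strictly decreasing with range $(-\infty,p)$: if $q\geq p$ then $q-h_p(r)>0$ for all $r$, so $g_{p,q}$ is strictly increasing; if $q<p$, then near $0$ one has $h_p(r)\to p>q$ so $g'_{p,q}<0$, while near $1$ one has $h_p(r)\to-\infty<q$ so $g'_{p,q}>0$, hence $g_{p,q}$ is not monotone. For $p<-2$, $h_p$ increases on $(0,r_0)$, decreases on $(r_0,1)$, and attains its maximum $C(p)=h_p(r_0)$: if $q>C(p)$ then $q-h_p(r)>0$ throughout and $g_{p,q}$ is strictly increasing; if $q=C(p)$ then $q-h_p(r)\geq0$ with equality only at the single point $r_0$, so $g'_{p,q}>0$ off $r_0$ and $g_{p,q}$ is still strictly increasing; if $q<C(p)$ then $h_p(r_0)=C(p)>q$ forces $g'_{p,q}<0$ on a neighbourhood of $r_0$ while $h_p(r)\to-\infty<q$ near $1$ forces $g'_{p,q}>0$ there, so $g_{p,q}$ is not monotone.

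I expect the main obstacle to be purely bookkeeping: massaging the logarithmic derivative into the clean form $r\bigl(q-h_p(r)\bigr)$ through the identity $2r^2\arth r=(1+r^2)\arth r-r'^2\arth r$ together with the definition of $h_p$. Once that reduction is in place, no further estimates are needed, since the monotonicity/non-monotonicity dichotomy is read off directly from the range and the qualitative shape of $h_p$ supplied by Lemma~\ref{t1l1}(2).
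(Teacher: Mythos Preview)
Your proposal is correct and follows exactly the paper's approach: the paper computes the same logarithmic derivative, obtaining $\dfrac{g'_{p,q}(r)}{g_{p,q}(r)}=\dfrac{1}{r'^2\arth r}\bigl(q-h_p(r)\bigr)$, and then appeals to Lemma~\ref{t1l1}(2). You simply spell out in full the algebra leading to this identity and the case analysis that the paper leaves implicit.
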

\begin{proof} By logarithmic differentiation in $r$,
$$\frac{g'_{p,q}(r)}{g_{p,q}(r)}=\frac{1}{r'^2\arth\, r}(q-h_p(r)),$$
where $h_p(r)$ is as in Lemma \ref{t1l1}(2). Hence the results immediately follow from Lemma \ref{t1l1}(2).
\end{proof}

\medskip

The following theorem studies the $H_{p,q}$-convexity of $\arth$.

\begin{theorem}\label{ath1}
The inverse hyperbolic tangent function $\arth$  is strictly $H_{p,q}$-convex on $(0,1)$ if and only if $(p,q)\in{D_1}\cup{D_2}$,
where $$D_1=\{(p,q)|-2\leq{p}<+\infty,\, p\leq q<+\infty\},$$
$$D_2=\{(p,q)|-\infty<p<-2,\,C(p)\leq q<+\infty\},$$
and $C(p)$ is a continuous function as in Lemma \ref{t1l1}(2).
There are no values of $p$ and $q$ for which $\arth$ is $H_{p,q}$-concave on the whole interval $(0,1)$.
\end{theorem}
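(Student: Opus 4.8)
The plan is to reduce the $H_{p,q}$-convexity of $\arth$ to the monotonicity of the function $g_{p,q}$ of Lemma \ref{t1l2}, by a change of variables that straightens the H\"older means into ordinary arithmetic means. For $t>0$ set $\psi_s(t)=t^s$ if $s\neq 0$ and $\psi_0(t)=\log t$; each $\psi_s$ is a strictly monotone homeomorphism and $H_s(r,u)=\psi_s^{-1}\big(\tfrac12(\psi_s(r)+\psi_s(u))\big)$. Put $G(x)=\psi_q\big(\arth(\psi_p^{-1}(x))\big)$ on the interval $\psi_p((0,1))$. Applying $\psi_q$ to both sides of $\arth(H_p(r,u))\le H_q(\arth r,\arth u)$ and writing $x=\psi_p(r)$, $y=\psi_p(u)$, I would check that this inequality is equivalent to the midpoint-convexity inequality $G\big(\tfrac{x+y}{2}\big)\le\tfrac12(G(x)+G(y))$ when $q>0$, to the reversed midpoint inequality when $q<0$ (since $\psi_q$ reverses order), and — in the logarithmic case $q=0$ — again to midpoint convexity of $G$. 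Because $\psi_p,\psi_q$ are bijections the algebra is reversible, and since $G$ is $C^\infty$ on the open interval, ``$\arth$ is $H_{p,q}$-convex on $(0,1)$'' is in every case equivalent to a single one-sided sign condition on $G''$, and ``strictly $H_{p,q}$-convex'' to the corresponding strict statement.

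Next I would compute $G''$ by the chain rule. Writing $r=\psi_p^{-1}(x)$ and using $\tfrac{dr}{dx}=\tfrac1p\,r^{1-p}$ (and its $\log$ analogue when $p=0$), a short computation gives
$$G'(x)=\frac{q}{p}\,\frac{(\arth r)^{q-1}r^{1-p}}{r'^2}=\frac{q}{p}\,g_{p,q}(r),\qquad G''(x)=\frac{q}{p^{2}}\,r^{1-p}\,g_{p,q}'(r),$$
with the obvious modifications when $p=0$ or $q=0$, where the constant $q/p$ is replaced by $q$, $1/p$, or $1$ but remains a nonzero constant of the relevant sign. Since $r^{1-p}>0$, the sign of $G''$ equals $\mathrm{sign}(q)\cdot\mathrm{sign}(g_{p,q}')$, and in each of the three regimes above the required sign condition on $G''$ unwinds to exactly the same statement: $g_{p,q}$ must be nondecreasing on $(0,1)$, strictly increasing for strict convexity. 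Hence $\arth$ is strictly $H_{p,q}$-convex on $(0,1)$ if and only if $g_{p,q}$ is strictly increasing there; and if $g_{p,q}$ is not nondecreasing then $g_{p,q}'$ takes both signs, so $G''$ changes sign, $G$ is neither convex nor concave, the midpoint inequality fails for some $x,y$, and thus the defining inequality fails for some $r,u$ — so $\arth$ is not $H_{p,q}$-convex. Invoking Lemma \ref{t1l2}, $g_{p,q}$ is strictly increasing exactly when $q\ge p$ (if $p\ge -2$) or $q\ge C(p)$ (if $p<-2$), i.e.\ exactly when $(p,q)\in D_1\cup D_2$, and for all other $(p,q)$ Lemma \ref{t1l2} gives that $g_{p,q}$ is not monotone, hence not nondecreasing. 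This yields the first assertion.

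For the concavity statement I would run the same reduction with every inequality reversed: $\arth$ is $H_{p,q}$-concave on $(0,1)$ precisely when $G$ is concave (for $q>0$) or convex (for $q<0$), which by the same bookkeeping means $g_{p,q}$ is nonincreasing on $(0,1)$. But Lemma \ref{t1l2} exhibits $g_{p,q}$ as either strictly increasing or non-monotone for every $(p,q)$, hence nonincreasing for no $(p,q)$; therefore $\arth$ is $H_{p,q}$-concave for no $(p,q)$.

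The only genuinely delicate points are the sign bookkeeping in the reduction — keeping straight how the signs of $p$, of $q$, and the order-reversal of $\psi_q$ conspire so that the condition always comes out as ``$g_{p,q}$ nondecreasing'' — and, in the negative direction, the observation that non-monotonicity of $g_{p,q}$ genuinely forces a failure of the H\"older-mean inequality for some pair of points. Both become routine once $G$ is recognized as a $C^2$ function with $\mathrm{sign}\,G''=\mathrm{sign}(q)\cdot\mathrm{sign}(g_{p,q}')$; after that the theorem is just a restatement of Lemmas \ref{t1l1} and \ref{t1l2}.
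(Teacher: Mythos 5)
Your proposal is correct, but it takes a genuinely different route to the same key lemma. The paper argues directly in two variables: for each of the cases $p,q\neq0$, $q=0$, $p=0$, $p=q=0$ it forms a comparison function $F(x,y)$ (a difference, or a ratio when $q=0$), fixes $y$, differentiates in $x$, and reads the sign of $\partial F/\partial x$ from the monotonicity of the same function $g_{p,q}$ of Lemma \ref{t1l2}; the case $p=q=0$ is then handled by a separate limiting argument in $p$, and the failure of convexity/concavity outside $D_1\cup D_2$ is only sketched (``an argument similar to Case 1.1''). You instead conjugate by the power maps $\psi_p,\psi_q$, turning $H_{p,q}$-convexity of $\arth$ into ordinary (midpoint, hence by continuity genuine) convexity or concavity of the one-variable function $G=\psi_q\circ\arth\circ\psi_p^{-1}$, and the computation $G''(x)=(q/p^{2})\,r^{1-p}g_{p,q}'(r)$ (suitably modified when $p=0$ or $q=0$) converts the whole theorem into the single statement ``$g_{p,q}$ is (strictly) nondecreasing on $(0,1)$,'' which is exactly Lemma \ref{t1l2}. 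This buys uniformity: all sign regimes of $p$ and $q$, including $p=q=0$, are treated at once, so you avoid the paper's limiting argument (which, as written, only yields the non-strict inequality for $H_{0,0}$ yet strictness is claimed), and the negative assertions --- failure of convexity off $D_1\cup D_2$ and the nonexistence of concavity --- drop out automatically, since the proof of Lemma \ref{t1l2} shows $g_{p,q}'$ takes both signs when $q<p$ (resp.\ $q<C(p)$), so $g_{p,q}$ is never nonincreasing and is nondecreasing precisely on $D_1\cup D_2$. The only points to keep straight, which you flag, are the order reversal of $\psi_q$ for $q<0$ (the reversal of $\psi_p$ for $p<0$ is harmless because the midpoint is orientation-free) and that strict $H_{p,q}$-convexity corresponds to $g_{p,q}$ strictly increasing rather than to $G''>0$ at every point; with those noted, your argument is complete and somewhat cleaner than the paper's case analysis, while the paper's approach avoids introducing the conjugated function and stays entirely within elementary differentiation of $F$.
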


\begin{proof}  The proof is divided into the following four cases.

{\it Case 1.} $p\neq0$ and $q\neq0$.
We may suppose that $0<x\leq y<1$. Define
$$F(x,y)=\arth^{q}\left(H_p(x,y)\right)-\frac{\arth^{q}x+\arth^{q}y}{2}.$$
Let $t=H_p(x,y)$, then $\frac{\partial t}{\partial x}=\frac 12(\frac xt)^{p-1}$. If $x<y$, we see that $t>x$.
By differentiation, we have
$$\frac{\partial F}{\partial x}=\frac{q}{2}x^{p-1}\left(\frac{\arth^{q-1}t}{t^{p-1}t'^2}-\frac{\arth^{q-1}x}{x^{p-1}x'^2}\right).$$

{\it Case 1.1.} $p\geq-2$, $q\geq{p}$, and $pq\neq0$.
By Lemma \ref{t1l2}(1), $\frac{\partial{F}}{\partial{x}}<0$ if $q<0$ and $\frac{\partial{F}}{\partial{x}}>0$ if $q>0$.
Then $F(x,y)$ is strictly decreasing and $F(x,y)\geq F(y,y)=0$ if $q<0$, and $F(x,y)$ is strictly increasing and $F(x,y)\leq F(y,y)=0$ if $q>0$.
Hence we have
$$\arth(H_p(x,y))\leq H_q(\arth\, x,\arth\, y)$$
with equality if and only if $x=y$.

In conclusion, $\arth$ is strictly $H_{p,q}$-convex on $(0,1)$ for $(p,q)\in\{(p,q)|-2{\leq}p<0, p\leq q<0\}\cup\{(p,q)|-2{\leq}p<0, q>0\}\cup\{(p,q)|0<p<+\infty,q\geq p\}$.

{\it Case 1.2.} $p\geq-2$, $q<p$, and $pq\neq0$.
By Lemma \ref{t1l2}(1), with an argument similar to Case 1.1, it is easy to see that
$\arth$ is neither $H_{p,q}$-concave nor $H_{p,q}$-convex on the whole interval $(0,1)$.

{\it Case 1.3.} $p<-2$, $q\geq{C(p)}$, and $pq\neq0$.
By Lemma \ref{t1l2}(2), $\frac{\partial{F}}{\partial{x}}<0$ if $q<0$ and $\frac{\partial{F}}{\partial{x}}>0$ if $q>0$.
Then $F(x,y)$ is strictly decreasing and $F(x,y)\geq F(y,y)=0$ if $q<0$, and $F(x,y)$ is strictly increasing and $F(x,y)\leq F(y,y)=0$ if $q>0$.
Hence we have
$$\arth(H_p(x,y))\leq H_q(\arth\, x,\arth\, y)$$
with equality if and only if $x=y$.

In conclusion, $\arth$ is strictly $H_{p,q}$-convex on $(0,1)$ for $(p,q)\in\{(p,q)|p<-2, C(p)\leq q<0\}\cup\{(p,q)|p<-2, q>0\}$.

{\it Case 1.4.} $p<-2$, $q<{C(p)}$, and $pq\neq0$.
By Lemma \ref{t1l2}(2), with an argument similar to Case 1.3, it is easy to see that
$\arth$ is neither $H_{p,q}$-concave nor $H_{p,q}$-convex on the whole interval $(0,1)$.

{\it Case 2.} $p\neq0$ and $q=0$.
For $0<x\leq y<1$, let
$$F(x,y)=\frac{\arth^2(H_p(x,y))}{\arth\,{x}\,\arth\,{y}},$$
 and $t=H_p(x,y)$. If $x<y$, we see that $t>x$.
By logarithmic differentiation, we obtain
$$\frac1{F}\frac{\partial F}{\partial x}=x^{p-1}\left(\frac{(\arth\,{t})^{-1}}{t^{p-1}t'^2}-\frac{(\arth\,{x})^{-1}}{x^{p-1}x'^2}\right).$$

{\it Case 2.1.} $-2\leq{p}<0$ and $q=0>p$. By Lemma \ref{t1l2}(1), we have $\frac{\partial F}{\partial x}>0$ and $F(x,y)\leq F(y,y)=1$. Hence we have
$$\arth(H_p(x,y))\leq\sqrt{\arth\,{x}\,\arth\,{y}}$$
with equality if and only if $x=y$.

In conclusion, $\arth$ is strictly $H_{p,q}$-convex on $(0,1)$ for $(p,q)\in\{(p,q)|-2\leq{p}<0, q=0\}$.

{\it Case 2.2.} $p>0$ and $q=0<p$.  By Lemma \ref{t1l2}(1), with an argument similar to Case 2.1, it is easy to see that
$\arth$ is neither $H_{p,q}$-concave nor $H_{p,q}$-convex on the whole interval $(0,1)$.

{\it Case 2.3.} $p<-2$. We have $q=0\geq C(p)$, and by Lemma \ref{t1l2}(2), with an argument similar to Case 2.1, it is easy to see that
$\arth$ is $H_{p,q}$-convex on $(0,1)$ for $(p,q)\in\{(p,q)|p<-2, q=0\}$.

{\it Case 3.} $p=0$ and $q\neq0$.
For $0<x\leq y<1$, let
$$F(x,y)=\arth^q(\sqrt{xy})-\frac{\arth^qx+\arth^qy}{2},$$
 and $t=\sqrt{xy}$. If $x<y$, we have that $t>x$. By differentiation, we obtain
$$\frac{\partial F}{\partial x}=\frac{q}{2x}\left(\frac{\arth^{q-1}t}{t^{-1}t'^2}-\frac{\arth^{q-1}x}{x^{-1}x'^2}\right).$$

{\it Case 3.1.} $q>p=0$. By Lemma \ref{t1l2}(1), we have $\frac{\partial F}{\partial x}>0$ and $F(x,y)\leq F(y,y)=0$. Hence we have
$$\arth(\sqrt{xy})\leq H_q(\arth\,{x},\,\arth\,{y})$$
with equality if and only if $x=y$.

In conclusion, $\arth$ is strictly $H_{p,q}$-convex on $(0,1)$ for $(p,q)\in\{(p,q)|p=0, q>0\}$.

{\it Case 3.2.} $q<p=0$. By Lemma \ref{t1l2}(1), with an argument similar to Case 3.1, it is easy to see that
$\arth$ is neither $H_{p,q}$-concave nor $H_{p,q}$-convex on the whole interval $(0,1)$.

{\it Case 4.} $p=q=0$. By Case 1.1, for all $x\,,y\in(0,1)$, we have
$$\arth(H_p(x,y))\leq H_p(\arth\,{x},\,\arth\,{y}),\quad\mbox{for}\quad{p\geq-2}\quad\mbox{and}\quad p\neq0.$$
By the continuity of $H_p$ in $p$ and $\arth$ in $x$, we have
$$\arth(H_0(x,y))\leq H_0(\arth\,{x},\arth\,{y}).$$
In conclusion, $\arth$ is strictly $H_{0,0}$-convex on $(0,1)$.

This completes the proof of Theorem \ref{ath1}.
\end{proof}

\medskip

Setting $p=1=q$ in Theorem \ref{ath1}, we easily obtain the convexity of $\arth$.

\begin{corollary}
The inverse hyperbolic tangent function $\arth$ is strictly convex on $(0,1)$.
\end{corollary}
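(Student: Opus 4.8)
The final statement to prove is the Corollary asserting that $\arth$ is strictly convex on $(0,1)$, obtained by specializing Theorem~\ref{ath1}.

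The plan is to invoke Theorem~\ref{ath1} directly with the choice $p=q=1$. First I would check that the pair $(1,1)$ lies in the admissible region $D_1\cup D_2$ described in the theorem. Since $D_1=\{(p,q)\mid -2\le p<+\infty,\ p\le q<+\infty\}$, and here $p=1\ge -2$ and $q=1\ge p=1$, we have $(1,1)\in D_1$. Hence Theorem~\ref{ath1} applies and tells us that $\arth$ is strictly $H_{1,1}$-convex on $(0,1)$.

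Next I would unwind what $H_{1,1}$-convexity means. By definition of the H\"older mean, $H_1(r,s)=\frac{r+s}{2}$ is the ordinary arithmetic mean. Thus strict $H_{1,1}$-convexity of $\arth$ says precisely that for all $r,s\in(0,1)$,
\begin{eqnarray*}
\arth\!\left(\frac{r+s}{2}\right)\le\frac{\arth\, r+\arth\, s}{2},
\end{eqnarray*}
with equality if and only if $r=s$. This is exactly the statement that $\arth$ is (midpoint, hence by continuity) strictly convex on $(0,1)$; since $\arth$ is continuous on $(0,1)$, midpoint convexity upgrades to full convexity, and the strictness of the midpoint inequality for $r\ne s$ gives strict convexity.

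There is essentially no obstacle here: the entire content has already been established in Theorem~\ref{ath1}, and the only work is the trivial verification that $(p,q)=(1,1)$ falls in $D_1$ together with the identification $H_1=A$. If one wished to avoid even the appeal to continuity for the midpoint-to-full convexity step, one could instead note that Case~1.1 of the proof of Theorem~\ref{ath1} with $p=q=1$ actually produces the full inequality $\arth(H_1(x,y))\le H_1(\arth x,\arth y)$ for all $x,y$, which is literally the convexity inequality; but the cleanest write-up is simply to cite Theorem~\ref{ath1} with $p=1=q$ and observe that $H_{1,1}$-convexity is ordinary convexity.
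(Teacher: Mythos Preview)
Your proposal is correct and follows essentially the same approach as the paper: the paper's entire proof is the one-line remark that setting $p=1=q$ in Theorem~\ref{ath1} gives the result. Your extra care in checking $(1,1)\in D_1$ and in noting that continuous midpoint convexity yields full convexity is fine but more detailed than what the paper records.
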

\medskip
By (\ref{arth}), Theorem \ref{ath1} has a simple application to the hyperbolic metric.

\begin{corollary}
Let $z\in S^1(H_p(|x|,|y|))$. Then for all $x,y\in\BB\setminus \{0\}$ and
$p\geq-2$
$$\rho(0,z)\leq H_p(\rho(0,x),\rho(0,y))$$
with equality if and only if $|x|=|y|$.
\end{corollary}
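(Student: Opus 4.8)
The plan is to reduce the assertion to the $H_{p,p}$-convexity of $\arth$ recorded in Theorem \ref{ath1}. Since $x,y\in\BB\setminus\{0\}$ we have $|x|,|y|\in(0,1)$, and because a H\"older mean of two numbers in $(0,1)$ lies between them, also $|z|=H_p(|x|,|y|)\in(0,1)$, so all quantities below are well defined.

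First I would use the rotation invariance of $\rho$ (rotations about $0$ are M\"obius transformations of $\BB$ onto $\BB$) together with formula \eqref{arth} to rewrite the three hyperbolic distances from the origin:
$$\rho(0,z)=\rho(0,|z|e_1)=2\arth\bigl(H_p(|x|,|y|)\bigr),\qquad \rho(0,x)=2\arth|x|,\qquad \rho(0,y)=2\arth|y|.$$
Next, invoking the positive homogeneity of the H\"older mean, $H_p(2a,2b)=2H_p(a,b)$, the claimed inequality $\rho(0,z)\leq H_p(\rho(0,x),\rho(0,y))$ is equivalent to
$$\arth\bigl(H_p(|x|,|y|)\bigr)\leq H_p\bigl(\arth|x|,\arth|y|\bigr),$$
which is exactly the statement that $\arth$ is $H_{p,p}$-convex on $(0,1)$.

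Finally I would observe that $(p,p)\in D_1$ for every $p\geq-2$ (this includes the degenerate value $p=0$, covered by Case 4 in the proof of Theorem \ref{ath1}), so Theorem \ref{ath1} yields the displayed inequality, and its strictness clause gives equality precisely when $|x|=|y|$. I do not anticipate any real obstacle here: the only points requiring care are checking that the diagonal $(p,p)$ lies in the convexity region $D_1$ for all $p\ge -2$, and correctly absorbing the factor $2$ via the homogeneity of $H_p$; both are routine.
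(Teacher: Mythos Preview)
Your proof is correct and follows exactly the approach the paper intends: the corollary is stated immediately after Theorem~\ref{ath1} with the remark ``By \eqref{arth}, Theorem~\ref{ath1} has a simple application to the hyperbolic metric,'' and you have simply spelled out that application, reducing the inequality via \eqref{arth} and the homogeneity of $H_p$ to the $H_{p,p}$-convexity of $\arth$ and then observing that $(p,p)\in D_1$ for $p\ge-2$.
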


\section{Some Propositions for Hyperbolic Metric}

Next we give some geometric propositions for the hyperbolic metric.

\begin{proposition}
Let $c,d$ be arbitrary two points on the unit circle such that $0\,,c\,,d$ are noncollinear. Let $b\in[c,d]\cap \BB$,
 $\{a\}=[0,b]\cap J^*[c,d]$, and let $s$ be the midpoint of
the Euclidean segment $[c,d]$. Then\\
(1) $a$ is the hyperbolic midpoint of the hyperbolic segment $J[0,b]$;\\
(2) $s$ is on the hyperbolic circle $S_{\rho}(a,\rho(0,a))=\{z|\rho(z,a)=\rho(0,a)\}$.
\end{proposition}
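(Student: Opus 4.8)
\emph{Proof plan.} The plan is to normalize the picture and then reduce each part to a short Euclidean computation. First I would rotate about the origin (a hyperbolic isometry fixing $0$) so that the Euclidean line through $0$ and $b$ is the positive real axis, and write $a=\alpha e_1$, $b=\beta e_1$ with reals $0<\alpha<\beta<1$. Here $\alpha\neq0$ because $0$ has power $|P|^2-r_P^2=1$ with respect to the circle $C=S^1(P,r_P)$ carrying the geodesic $J^*[c,d]=C\cap\BB$, so $0\notin C$; and $\alpha<\beta$ because $a\in[0,b]$ with $a\neq b$, since $J^*[c,d]$ and the chord $[c,d]$ meet only at $c,d\in S^1$. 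Orthogonality of $C$ and $S^1$ gives $|P|^2=1+r_P^2$. Note also that $J[0,b]$ coincides with the Euclidean segment $[0,b]$, which is a hyperbolic geodesic, and $a$ is an interior point of it.

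\emph{Part (1).} From $a\in C$, i.e.\ $|\alpha e_1-P|^2=r_P^2$, expanding and using $r_P^2=|P|^2-1$ yields $e_1\cdot P=\frac{1+\alpha^2}{2\alpha}$. The Euclidean line through $c$ and $d$ is the radical axis of $S^1$ and $C$, which (using $|P|^2-r_P^2=1$) is $\{z:z\cdot P=1\}$; since $b=\beta e_1$ lies on it, $\beta\,(e_1\cdot P)=1$, hence $\beta=\frac{2\alpha}{1+\alpha^2}$. By \eqref{arth} together with the identity $\arth\frac{2t}{1+t^2}=2\arth t$,
$$\rho(0,b)=2\arth\beta=2\arth\frac{2\alpha}{1+\alpha^2}=4\arth\alpha=2\rho(0,a).$$
As $a$ lies between $0$ and $b$ on the geodesic $J[0,b]$, we have $\rho(0,a)+\rho(a,b)=\rho(0,b)$, so $\rho(a,b)=\rho(0,a)$; that is, $a$ is the hyperbolic midpoint of $J[0,b]$.

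\emph{Part (2).} Let $\sigma$ be the reflection in the geodesic $J^*[c,d]$, i.e.\ the Euclidean inversion in $C$. Since $C\perp S^1$, $\sigma$ maps $\BB$ onto itself and is an isometry of $(\BB,\rho)$, and it fixes every point of $C$; in particular $\sigma(a)=a$. A one-line computation gives $\sigma(0)=P+r_P^2\,\frac{0-P}{|P|^2}=\frac{P}{|P|^2}$, using $|P|^2-r_P^2=1$. On the other hand, since $|c|=|d|=1$, the point $0$ lies on the perpendicular bisector of $[c,d]$, so the midpoint $s$ of $[c,d]$ is the foot of the perpendicular from $0$ to the line $\{z:z\cdot P=1\}$, namely $s=tP$ with $t|P|^2=1$, i.e.\ $s=\frac{P}{|P|^2}$. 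Hence $\sigma(0)=s$, and
$$\rho(s,a)=\rho(\sigma(0),\sigma(a))=\rho(0,a),$$
so $s$ lies on the hyperbolic circle $S_\rho(a,\rho(0,a))$. (Alternatively, by \eqref{th} one has ${\rm th}\,\tfrac{\rho(0,a)}{2}=\alpha={\rm th}\,\tfrac{\rho(s,a)}{2}$, the second equality amounting to $|s-a|^2=\alpha^2(1-|s|^2)$, which follows from $s=\tfrac{P}{|P|^2}$ and $e_1\cdot P=\tfrac{1+\alpha^2}{2\alpha}$.)

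I do not anticipate a real obstacle here: the work is entirely in choosing the reductions. The three facts carrying the argument are that a Euclidean segment through $0$ is a hyperbolic geodesic (which turns Part (1) into a statement about Euclidean radii), the two identifications $e_1\cdot P=\frac{1+\alpha^2}{2\alpha}$ and ``chord line $=$ radical axis'' for Part (1), and the equality $\sigma(0)=s$ for Part (2). Every non-degeneracy needed --- that $a$ is strictly between $0$ and $b$, that $s\in\BB$ (indeed $|s|^2=1/|P|^2<1$), and that the intersection defining $a$ is a single point --- is forced by the standing hypotheses that $0,c,d$ are non-collinear and $b\in[c,d]\cap\BB$.
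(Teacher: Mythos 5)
Your proof is correct, and it takes a genuinely different route from the paper, most notably in part (2). For part (1) the paper fixes $c=e^{i\alpha}$, $d=e^{-i\alpha}$ and extracts the relation $|b|=\frac{2|a|}{1+|a|^2}$ from the law of cosines in the triangle with vertices $0$, $a$ and the center $w=1/\cos\alpha$ of the orthogonal circle, using $|b|\cos\beta=\cos\alpha$; you instead rotate so that $a=\alpha e_1$, $b=\beta e_1$ lie on the positive real axis and get the same relation from the two identities $e_1\cdot P=\frac{1+\alpha^2}{2\alpha}$ (orthogonality) and ``chord line $=$ radical axis $=\{z\cdot P=1\}$'' --- essentially the same computation in a different normalization, and your explicit appeal to $\arth\frac{2t}{1+t^2}=2\arth t$ together with additivity along the geodesic is, if anything, slightly more complete than the paper's equivalence chain. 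For part (2) the approaches diverge: the paper argues that $[0,b]$ is the Euclidean diameter of the hyperbolic circle $S_\rho(a,\rho(0,a))$ and that $s$, being the foot of the perpendicular from $0$ to the chord, lies on the circle with diameter $[0,b]$ (a Thales-type observation), whereas you use the inversion $\sigma$ in the circle carrying $J^*[c,d]$, note that it is a hyperbolic isometry fixing $a$, and verify $\sigma(0)=P/|P|^2=s$, so $\rho(s,a)=\rho(\sigma(0),\sigma(a))=\rho(0,a)$. Your isometry argument buys a conceptual explanation (the chord midpoint $s$ is exactly the hyperbolic reflection of $0$ in the geodesic $J^*[c,d]$, which is a bit more information than membership in the circle) and avoids any claim about the Euclidean shape of hyperbolic circles; the paper's argument is shorter but leans on an unproved ``geometric observation.'' Your non-degeneracy remarks ($0\notin C$, $a\neq b$, $s\in\BB$) are accurate and welcome.
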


\begin{figure}[h]
\begin{minipage}[t]{0.45\linewidth}
\centering
\includegraphics[width=6cm]{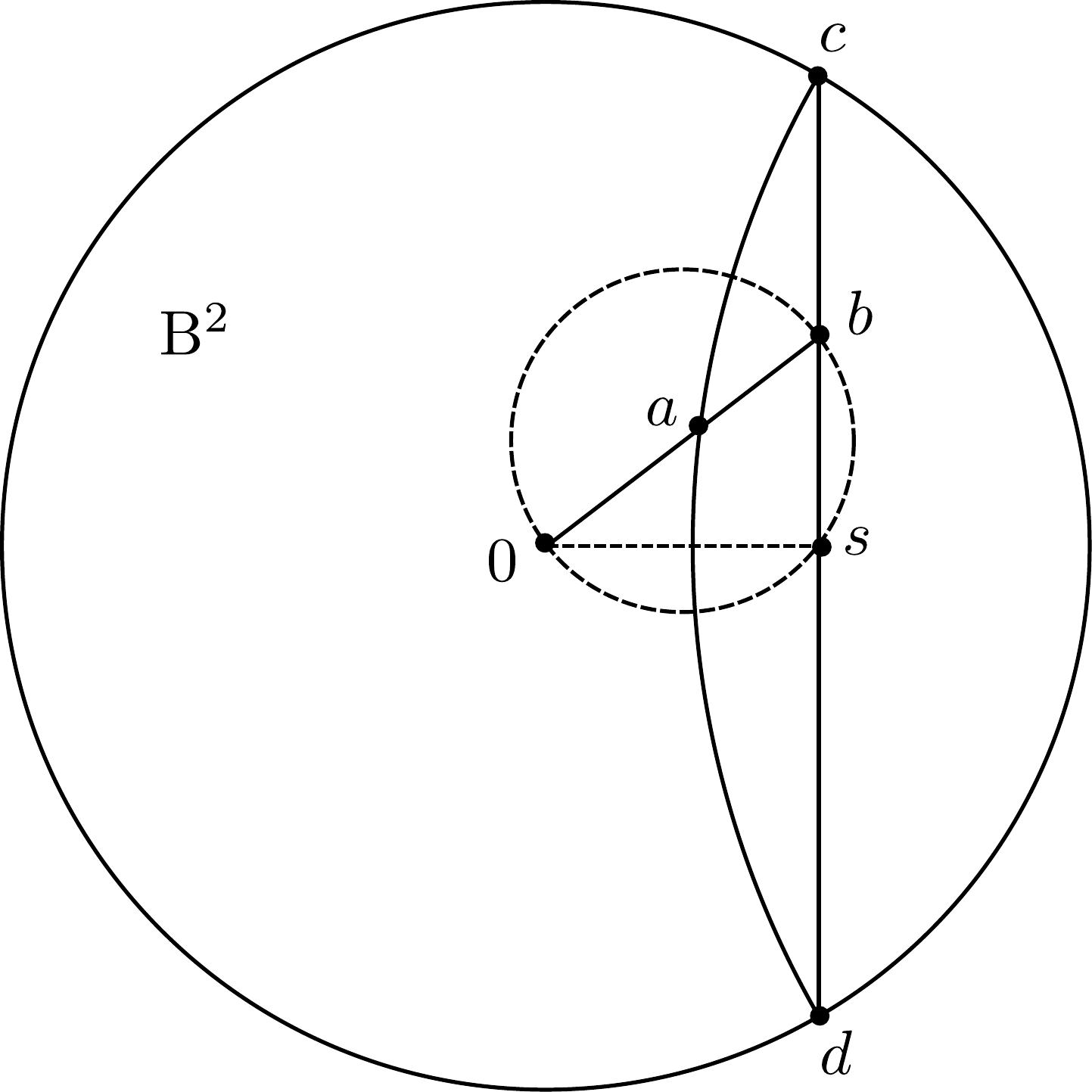}
\end{minipage}
\caption{The point $a$ is the hyperbolic midpoint of $J[0,b]\,.$ The circular arc $(cad)$ is orthogonal to the unit circle. Here $s$ is the Euclidean midpoint of the chord $[c,d]$. \label{abcd}}
\end{figure}

\begin{proof}
Without loss of generality, we may assume $c=e^{i\alpha}$ and $d=e^{-i\alpha}$, where $0<\alpha<\pi/2$.

(1) The point $a$ is the hyperbolic midpoint of the hyperbolic segment $J[0,b]$
\begin{eqnarray}\label{rhoab}
\Leftrightarrow \rho(0,b)=2\rho(0,a) \Leftrightarrow \log\frac{1+|b|}{1-|b|}=2\log\frac{1+|a|}{1-|a|} \Leftrightarrow |b|=\frac{2|a|}{1+|a|^2}.
\end{eqnarray}
Let $b=|b|e^{i\beta}$( $-\frac{\pi}{2}<\beta<\frac{\pi}{2}$).
Then by the orthogonality of $J^*[c,d]$ and the unit circle
$$
s=\cos\alpha=|b|\cos\beta=|b|\frac{|a|^2+|w|^2-r^2}{2|a| |w|},
$$
where $w=1/\cos\alpha$ and $r=\sqrt{|w|^2-1}$ are the center and the radius, resp., of the circle containing $c\,,d$ and orthogonal to $\partial\BB$.
Therefore, the last equality in (\ref{rhoab}) holds and $a$ is the hyperbolic midpoint
of $J[0,b]$.

\medskip
(2) It is easy to see that $[0,b]$ is also the Euclidean diameter of
$S_{\rho}(a,\rho(0,a))$ by geometric observation and (1). Therefore,
$s$ is on the hyperbolic circle $ S_{\rho}(a,\rho(0,a))$, see Figure \ref{abcd}.
\end{proof}


\begin{proposition}\label{a}
Let $J_1=J^*[e^{i\alpha},-e^{-i\alpha}]$,
$J_2=J^*[-e^{i\alpha},e^{-i\alpha}]$ be two hyperbolic geodesics in $\BB$,
$0<\alpha<\pi/2$. Let $\{t e_2\}=J_1\cap Y$, see Figure \ref{a1}. Then
\begin{eqnarray}\label{ad}
d_\rho(J_1,J_2)=\rho(-t e_2, t e_2)=2\log\frac{1+t}{1-t}.
\end{eqnarray}
\end{proposition}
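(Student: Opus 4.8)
\medskip

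The plan is to exploit the symmetry of the configuration and reduce the computation of $d_\rho(J_1,J_2)$ to the hyperbolic distance between two symmetric points on the imaginary axis. First I observe that the reflection $z\mapsto -\bar z$ across the imaginary axis $Y$ is a hyperbolic isometry of $\BB$ that interchanges $e^{i\alpha}\leftrightarrow -e^{-i\alpha}$ and $-e^{i\alpha}\leftrightarrow e^{-i\alpha}$; hence it fixes $J_1$ setwise and maps $J_2$ onto itself as well. More importantly, the reflection $z \mapsto \bar z$ across the real axis $X$ swaps the two endpoints of $J_1$ with those of $J_2$ (it sends $e^{i\alpha}\mapsto e^{-i\alpha}$, $-e^{-i\alpha}\mapsto -e^{i\alpha}$), so it interchanges $J_1$ and $J_2$. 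Since the common perpendicular geodesic realizing the distance between two disjoint geodesics is unique, and since the pair $\{J_1,J_2\}$ is invariant under reflection in $X$, that common perpendicular must be the fixed-point set of this reflection, i.e. the real axis $X$. Thus $d_\rho(J_1,J_2)$ equals the hyperbolic length of the segment of $X$ between $J_1\cap X$ and $J_2\cap X$.

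\medskip

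Next I would locate the intersection points. By the $-\bar z$ symmetry, $J_1$ meets $X$ at a point $-u e_1$ and $J_2$ meets $X$ at $u e_1$ for some $u\in(0,1)$ (one checks the sign: $J_1$ joins $e^{i\alpha}$ in the upper half to $-e^{-i\alpha}$ in the lower-left, so it crosses $X$ on the negative side). Therefore
\begin{eqnarray*}
d_\rho(J_1,J_2)=\rho(-u e_1, u e_1)=\rho(0,u e_1)+\rho(0,u e_1)=2\log\frac{1+u}{1-u},
\end{eqnarray*}
using \eqref{arth} and the fact that $0$ lies on the geodesic segment $[-u e_1,u e_1]$. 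It remains to identify $u$ with the quantity $t$ defined by $\{t e_2\}=J_1\cap Y$. For this I use the M\"obius invariance of the absolute ratio together with the formula \eqref{orar}, or more directly: the geodesic $J_1$ is the arc of the circle orthogonal to $S^1$ through $e^{i\alpha}$ and $-e^{-i\alpha}$; by \eqref{orar} (with $x=e^{i\alpha}$, $y=-e^{-i\alpha}$) its center and radius are explicit, and intersecting with $X$ and with $Y$ gives $u$ and $t$ respectively. A short computation should show $u=t$; equivalently, the map $z\mapsto iz$ (rotation by $\pi/2$, a hyperbolic isometry) carries $J_1$ to the geodesic through $ie^{i\alpha}=-e^{-i\alpha}\cdot(-1)\cdots$ — more cleanly, rotation by $\pi/2$ sends the endpoint pair $\{e^{i\alpha},-e^{-i\alpha}\}$ of $J_1$ to $\{ie^{i\alpha}, -ie^{-i\alpha}\}=\{-e^{i(\pi/2-\alpha)}\cdot(-1),\dots\}$; I would instead simply note that the circle defining $J_1$ is symmetric under $z\mapsto iz$ composed with a reflection, forcing $|J_1\cap X|=|J_1\cap Y|$, hence $u=t$.

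\medskip

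The main obstacle is the second step: rigorously justifying that the common perpendicular of the two geodesics is exactly the real axis, and that $d_\rho(J_1,J_2)$ is attained along it rather than only being bounded by $\rho(-ue_1,ue_1)$. The clean way is to invoke the standard fact from hyperbolic geometry that two disjoint (non-asymptotic) geodesics admit a unique common perpendicular along which the distance is realized \cite{be}, and then use the reflection symmetry in $X$ to pin down that perpendicular. A more self-contained alternative, which I would include if the symmetry argument is deemed too terse, is to apply a M\"obius transformation of $\BB$ onto $\UH$ sending the configuration to two geodesics symmetric about a vertical line or about the point $i$, where the distance between the two sides is a routine computation via \eqref{cosh}; the bookkeeping there is the only genuinely laborious part. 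Once $u=t$ is established the conclusion \eqref{ad} is immediate.
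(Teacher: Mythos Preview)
Your symmetry strategy is sound in spirit but rests on a geometric error that invalidates the argument as written. You assert that $-e^{-i\alpha}$ lies ``in the lower-left''; in fact $-e^{-i\alpha}=-\cos\alpha+i\sin\alpha$ has positive imaginary part, so both endpoints of $J_1$ lie in the upper half of $\BB$. A direct check (the orthogonal circle through $e^{i\alpha}$ and $-e^{-i\alpha}$ has centre $i/\sin\alpha$ and radius $\cot\alpha<1/\sin\alpha$) shows that $J_1$ never meets the real axis $X$ at all, so your points $\pm u e_1$ do not exist and the subsequent computation and the ``$u=t$'' step are vacuous.

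The logical slip occurs earlier: from ``the pair $\{J_1,J_2\}$ is invariant under reflection in $X$'' you conclude that the common perpendicular equals the fixed-point set $X$. Invariance only forces the (unique) common perpendicular to be \emph{mapped to itself} by the reflection; that allows it to be either $X$ or any geodesic orthogonal to $X$. Here the correct conclusion comes from your \emph{first} symmetry: since $z\mapsto-\bar z$ fixes each of $J_1$ and $J_2$ setwise, each meets the imaginary axis $Y$ orthogonally, so $Y$ itself is the common perpendicular. The feet are precisely $te_2\in J_1$ and (by the $\bar z$–symmetry) $-te_2\in J_2$, giving $d_\rho(J_1,J_2)=\rho(-te_2,te_2)$ immediately, with no auxiliary parameter $u$ needed.

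For comparison, the paper does not argue via the common perpendicular at all: it sends $\BB$ to $\UH$ by a M\"obius map with $g(i)=\infty$, $g(-i)=0$, so that $Y$ becomes a vertical ray and $J_1,J_2$ become concentric semicircles; the distance is then read off from the obvious monotonicity of the hyperbolic length integral. Your corrected symmetry argument would be a legitimate and arguably cleaner alternative, but as submitted the proof does not go through.
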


\begin{proof}
Let $g: \BB\rightarrow \UH$ be a M\"obius  transformation which satisfies $g(i)=\infty$ and $g(-i)=0$. Then $g([-te_2, te_2])=[g(-te_2),g(te_2)]$ which is on the ray emanating from $0$ and perpendicular to $\partial \HH^2$.
By the orthogonality of $J_i$($i=1,2$) and $Y$, we get
$$J^*_1=g(J_1)=\Arc(g(-e^{-i\alpha})g(te_2)g(e^{i\alpha})) \,\ ,\,\
J^*_2=g(J_2)=\Arc(g(-e^{i\alpha})g(-te_2)g(e^{-i\alpha})).$$
Here $g(-e^{-i\alpha})=-g(e^{i\alpha})$ and $g(-e^{i\alpha})=-g(e^{-i\alpha})$, see Figure \ref{a2}.
For every rectifiable arc $\gamma_{xy}$, $x\in J^*_1$ and $y\in J^*_2$, by geometric observation we get
$$\int_{\gamma_{xy}}\frac{1}{d(z,\partial \UH)} |dz|\geq\int_{[g(-te_2),g(te_2)]}\frac{1}{d(z,\partial \UH)} |dz|,$$
and hence
\beq\label{adg}
d_\rho(J^*_1, J^*_2)=\rho(g(-t e_2),g(t e_2)).
\eeq
Since the hyperbolic distance is invariant under M\"obius transformations,
by (\ref{adg}) we get
\begin{eqnarray*}
d_\rho(J_1,J_2)=\rho(-t e_2, t e_2)=2\log\frac{1+t}{1-t}.
\end{eqnarray*}
\end{proof}
\begin{figure}[h]
\begin{minipage}[t]{0.45\linewidth}
\centering
\includegraphics[width=6cm]{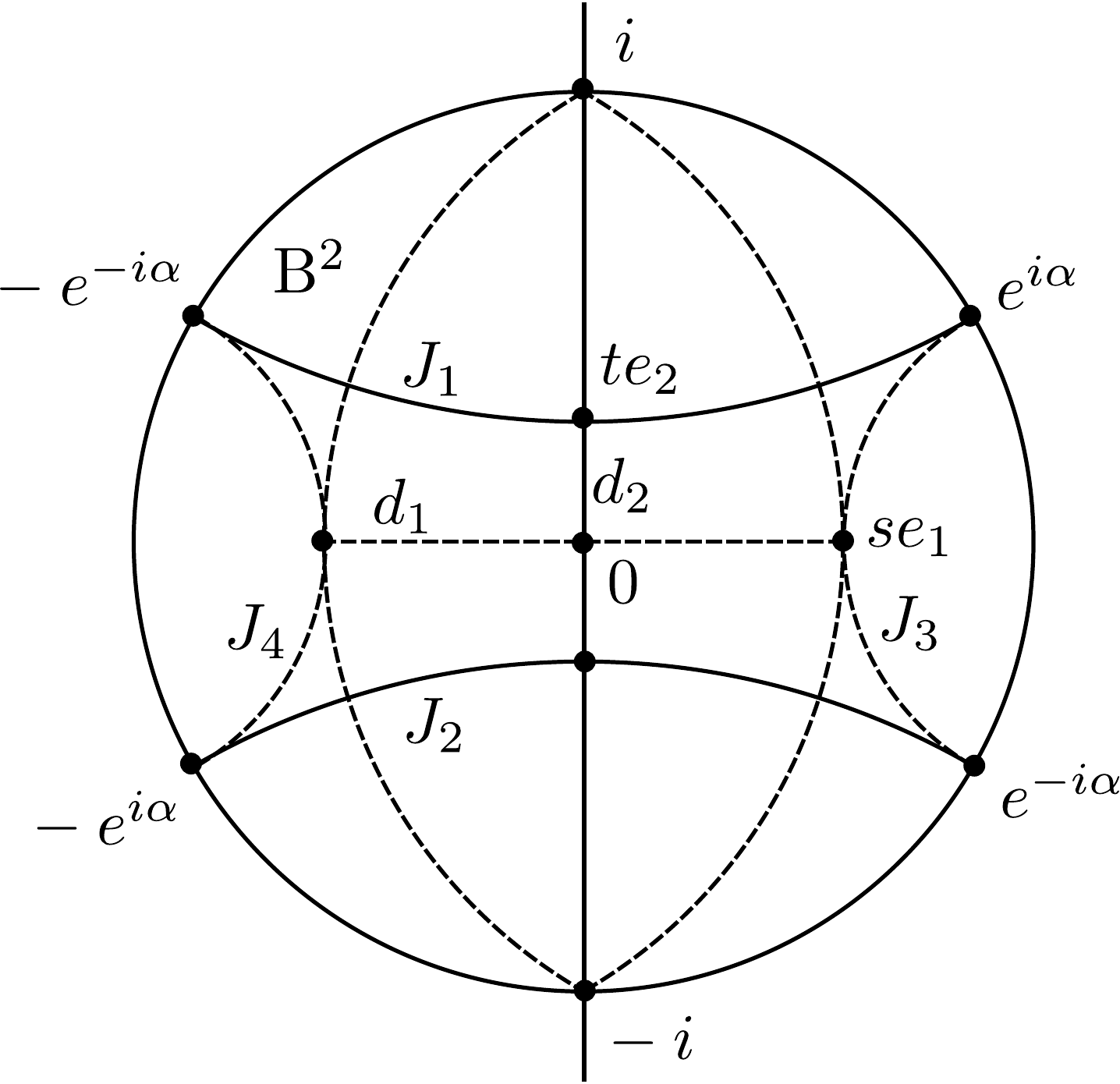}
\caption{\label{a1}}
\end{minipage}
\hfill
\hspace{1cm}
\begin{minipage}[t]{0.45\linewidth}
\centering
\includegraphics[width=7.8cm]{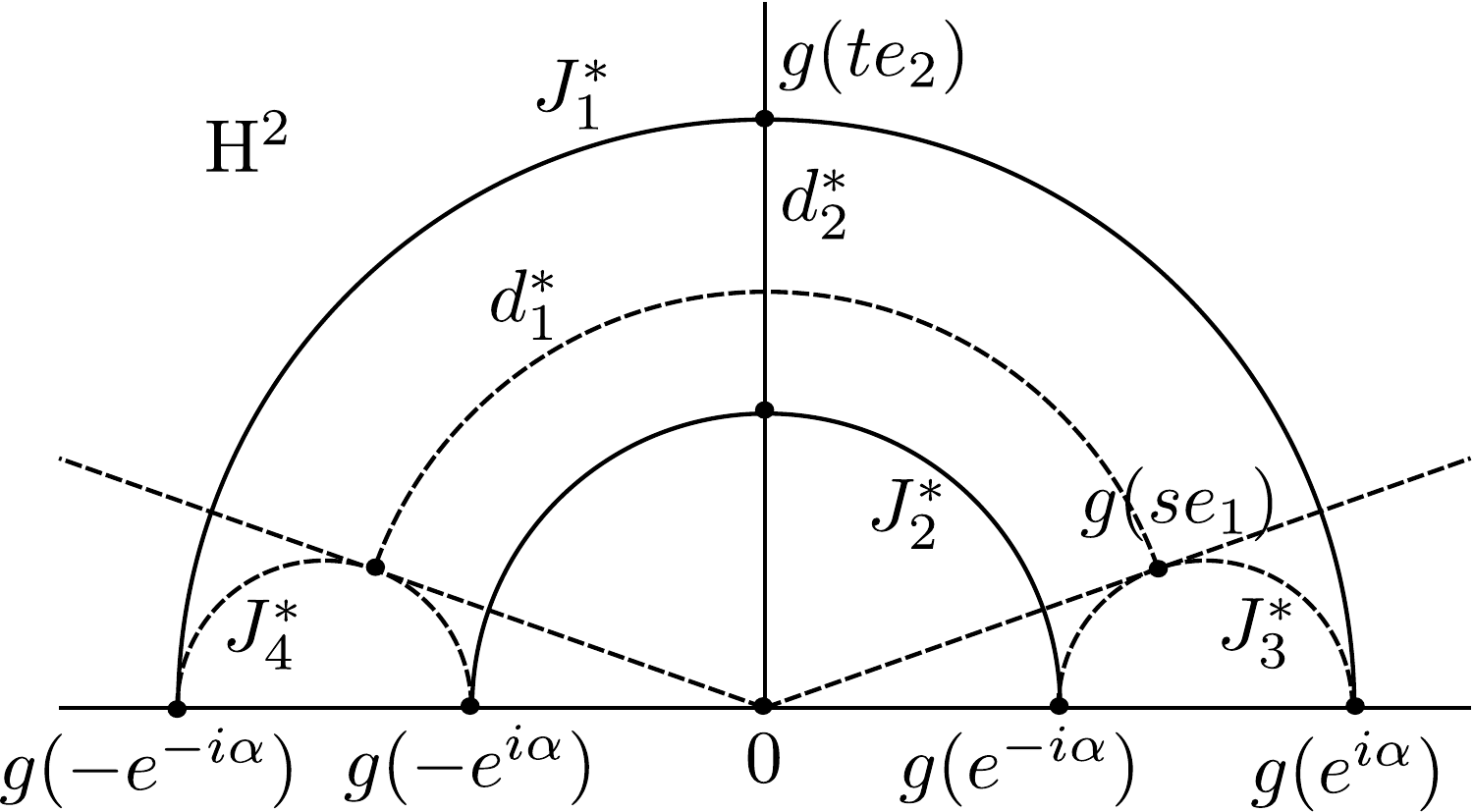}
\caption{\label{a2}}
\end{minipage}
\caption*{The M\"obius transformation $g$ maps the unit disk in Figure \ref{a1} onto the upper half plane in Figure \ref{a2}.}
\end{figure}

Let $J_3=J^*[e^{i\alpha},e^{-i\alpha}]$ and $J_4=J^*[-e^{-i\alpha},-e^{i\alpha}]$. Let $\{s e_1\}=J_3\cap X$. Since the hyperbolic distance is invariant under rotations, by Proposition \ref{a} we also obtain
\beq\label{j3j4}
d_\rho(J_3,J_4)=\rho(-se_1, se_1).
\eeq

By the proof of Proposition \ref{a}, $J^*_3=g(J_3)=\SArc(g(e^{i\alpha})g(e^{-i\alpha}))$ and $J^*_4=g(J_4)=\SArc(g(-e^{-i\alpha})g(-e^{i\alpha}))$.  Then
$g$ maps $\Arc((-i)(s e_1)i)$ to the ray emanating from $0$ and tangent to $J^*_3$ at the point $g(se_1)$.
Similarly, $g$ maps $\Arc((-i)(-s e_1)i)$ to the ray emanating from $0$ and tangent to $J^*_4$ at the point $g(-se_1)$.
Then we have
\beq\label{j3j4g}
d_\rho(J^*_3, J^*_4)=\rho(g(-se_1),g(se_1)).
\eeq

\begin{remark}
By (\ref{ad}) and (\ref{j3j4}), we find the two distances between the opposite sides for the hyperbolic quadrilaterals with four vertices $e^{i\alpha},\,-e^{-i\alpha},\,-e^{i\alpha},\,e^{-i\alpha}$ on the unit circle and the counterpart for the upper half plane by (\ref{adg}) and (\ref{j3j4g}).
\end{remark}

\section{Proof of Main Results }

\begin{proof}[Proof of Theorem \ref{Lamdd}]
Since the hyperbolic distance is M\"obius invariant, we may assume that $v_a=0$, $v_b$ is on $X$, $v_d$ is on $Y$ and $v_c=t e^{i\theta}$, $0<t\leq1$ and $0<\theta<\frac{\pi}{2}$ (see Figure \ref{Lamb}).
Then by (\ref{orar}) the circle $S^1(b,r_b)$ through $v_c$ and $\overline{v_c}$ is orthogonal to $\partial\BB$, where
$$
b=\frac{1+t^2}{2t\cos\theta}\,\,\,\,\,{\rm and}\,\,\,\,r_b=\frac{\sqrt{(1+t^2)^2-4t^2\cos^2\theta}}{2t\cos\theta}.
$$
By Proposition \ref{a}, we get
$$
d_1=\rho(0,v_b)=\log\frac{1+(b-r_b)}{1-(b-r_b)}=\arth\left(\frac{2t}{1+t^2}\cos\theta\right).
$$
Similarly, we get
$$
d_2=\rho(0,v_d)=\arth\left(\frac{2t}{1+t^2}\sin\theta\right).
$$
Then
$$d_1d_2=\arth(Lr)\arth(Lr'),$$
where $L=\frac{2t}{1+t^2}={\rm th}\rho(0,v_c)\in(0,1]$ and $r=\cos\theta\in(0,1)$. By Lemma \ref{lecr}(2), we have
$$d_1d_2\leq \left(\arth\left(\frac{\sqrt 2}{2}L\right)\right)^2.$$

This completes the proof of Theorem \ref{Lamdd}.
\end{proof}

\medskip

\begin{proof}[Proof of Theorem \ref{Lamdad}]
By the proof of Theorem \ref{Lamdd}, we have
$$d_1+d_2=\arth(Lr)+\arth(Lr'),$$
where $L\in(0,1]$ and $r\in(0,1)$.
Then by Lemma \ref{lecra}, the desired conclusion follows.
\end{proof}

\medskip

\begin{proof}[Proof of Corollary \ref{dd}]
There exists a M\"obius transformation $g$ which maps $a,b,c,d$ to
$e^{i\alpha}$, $-e^{-i\alpha}$, $-e^{i\alpha}$, $e^{-i\alpha}$, resp., where $\alpha=\arccos\sqrt{1/|a,b,c,d|}\in(0\,,\pi/2)$, see Figure \ref{a1}.
By Proposition \ref{a} and the proof of Theorem \ref{Lamdd}, we have
\beq\label{d2}
d_1=2 \arth\,r\,\,\,\,{\rm and}\,\,\,\,d_2=2 \arth\,r'
\eeq
where $r=\cos\alpha$.

Therefore, by Theorem \ref{Lamdd} and Theorem \ref{Lamdad}, we have
$$d_1d_2=4 (\arth\,r)(\arth\,r')\leq \left(2 \arth({\sqrt{2}}/{2})\right)^2$$
and
$$d_1+d_2=2 (\arth\,r+\arth\,r')\geq 4\arth({\sqrt{2}}/{2}).$$
The equalities hold if and only if $\alpha=\frac{\pi}{4}$, namely, $|a,b,c,d|=\frac{1}{\cos^2\alpha}=2$.

This completes the proof of Corollary \ref{dd}.
\end{proof}

Let $G\,,G'$ be domains in $\overline{\Rn}$ and let $f: G\rightarrow G'$ be a homeomorphism. Then $f$ is $K$-quasiconformal if
$$M(\Gamma)/K\le M(f\Gamma)\le K M(\Gamma)$$
for every curve family $\Gamma$ in $G$, where $M(\Gamma)$ is the modulus of $\Gamma$, see \cite[10.9]{v}.

For $r \in (0,1)$ and $K \ge 1$, we define the distortion function
 $$
  \varphi_{K}(r) = \mu^{-1}(\mu(r)/K)\,,
$$
where $\mu(r)$ is the modulus of the planar Gr\"otzsch ring, see \cite[Exercise 5.61]{v}.

\begin{lemma}\label{leqc}{\rm  \cite[Theorem 1.10] {bv} }
Let $f: \BB\rightarrow\BB$ be a $K$-quasiconformal mapping with $f \BB= \BB$, and let $\rho$ be the hyperbolic metric of $\BB$. Then
$$\rho(f(x),f(y))\leq A(K)\max\{\rho(x,y),\rho(x,y)^{1/K}\}$$
for all $x,y\in \BB$, where $A(K)=2 \arth (\varphi_K({\rm th} \frac 12))$ and
$$K\leq u(K-1)+1\leq\log({\rm ch} (K {\rm arch}(e)))\leq A(K)\leq v(K-1)+K$$
with  $u= {\rm arch}(e){\rm th} ( {\rm arch}(e))>1.5412$ and $v=\log(2(1+\sqrt{1-{1}/{e^2}}))<1.3507$. In particular,
$A(1)=1$.
\end{lemma}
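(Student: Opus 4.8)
The plan is to derive the inequality from the Hersch--Pfluger (sharp Schwarz) lemma for quasiconformal self-maps of $\BB$, reducing it to a one-variable inequality which is then handled through the monotonicity and asymptotics of the Gr\"otzsch distortion function $\varphi_K$, and finally to read off the two-sided estimate for $A(K)$ by elementary analysis. \emph{Step 1 (normalisation).} Fix $x,y\in\BB$, $x\neq y$, and pick M\"obius automorphisms $T,S$ of $\BB$ with $T(0)=x$ and $S(f(x))=0$. Then $h:=S\circ f\circ T$ is a $K$-quasiconformal self-map of $\BB$ with $h(0)=0$; writing $z=T^{-1}(y)$ and using M\"obius invariance of $\rho$ we get $\rho(f(x),f(y))=\rho(h(0),h(z))=\rho(0,h(z))=2\arth|h(z)|$ and $|z|={\rm th}\,\frac{\rho(0,z)}{2}={\rm th}\,\frac{\rho(x,y)}{2}$. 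By the Hersch--Pfluger form of the Schwarz lemma for $K$-quasiconformal self-maps of $\BB$ fixing the origin (see \cite{v}, \cite{avv1}), $|h(z)|\leq\varphi_K(|z|)$, whence
$$\rho(f(x),f(y))\leq 2\arth\varphi_K\!\left({\rm th}\,\tfrac{\rho(x,y)}{2}\right)=:\psi(\rho(x,y)).$$
Since $\psi(1)=2\arth\varphi_K({\rm th}\,\tfrac12)=A(K)$ and $\max\{t,t^{1/K}\}$ equals $t$ for $t\geq1$ and $t^{1/K}$ for $0<t\leq1$, it remains to prove $\psi(t)\leq A(K)\max\{t,t^{1/K}\}$ for every $t>0$.

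\emph{Step 2 (the one-variable inequality).} It is enough to show that $t\mapsto\psi(t)/t$ is decreasing and $t\mapsto\psi(t)/t^{1/K}$ is increasing on $(0,\infty)$, for then evaluation at $t=1$ gives $\psi(t)\leq A(K)t$ when $t\geq1$ and $\psi(t)\leq A(K)t^{1/K}$ when $0<t\leq1$. Under the increasing substitution $r={\rm th}\,(t/2)$ (so $t=2\arth r$) these two assertions become: $r\mapsto\dfrac{\arth\varphi_K(r)}{\arth r}$ is decreasing and $r\mapsto\dfrac{\arth\varphi_K(r)}{(\arth r)^{1/K}}$ is increasing on $(0,1)$. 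Both are standard properties of $\varphi_K$, provable by the monotone l'H\^opital rule (Lemma \ref{lhr}) from the explicit derivative of $\varphi_K$, the identity $\varphi_{1/K}(r')^2=1-\varphi_K(r)^2$, and the boundary behaviour $\varphi_K(r)\sim4^{1-1/K}r^{1/K}$ as $r\to0^+$. In particular the first ratio decreases from $+\infty$ at $r=0^+$ to $K$ at $r=1^-$ and equals $A(K)$ at $r={\rm th}\,\tfrac12$, which already yields $A(K)\geq K$ with equality only for $K=1$.

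\emph{Step 3 (explicit envelopes for $A(K)$).} Because ${\rm ch}({\rm arch}(e))=e$, the map $K\mapsto\log{\rm ch}(K\,{\rm arch}(e))$ equals $1$ at $K=1$, has derivative ${\rm arch}(e)\,{\rm th}({\rm arch}(e))=u$ there, and is convex, so it lies above its tangent line $1+u(K-1)$; and $1+u(K-1)\geq K$ since $u>1$. This gives $K\leq u(K-1)+1\leq\log{\rm ch}(K\,{\rm arch}(e))$. For the remaining inequalities one works at $r={\rm th}\,\tfrac12=\frac{e-1}{e+1}$, where $r'=\frac{2\sqrt e}{e+1}$, using $\varphi_K(r)^2=1-\varphi_{1/K}(r')^2$ together with known sharp estimates for $\varphi_K$ (equivalently for $\mu$): a lower bound of the form $\varphi_{1/K}(r')\geq4^{1-K}(r')^K$ gives $\varphi_K(r)\leq\sqrt{1-4^{2(1-K)}(r')^{2K}}$ and hence, after a routine estimate, $A(K)\leq v(K-1)+K$; while $\log{\rm ch}(K\,{\rm arch}(e))\leq A(K)$ is equivalent to $\varphi_K({\rm th}\,\tfrac12)\geq{\rm th}^2(\tfrac K2\,{\rm arch}(e))$, which follows from a sharp lower bound for $\varphi_K$. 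All these comparisons reduce to equalities at $K=1$; in particular $A(1)=2\arth\varphi_1({\rm th}\,\tfrac12)=2\arth{\rm th}\,\tfrac12=1$.

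\emph{Main obstacle.} Steps 1 and 3 are short and essentially elementary once the cited inputs are in place. The heart of the matter is Step 2: the two monotonicity statements --- equivalently, those of $r\mapsto\arth\varphi_K(r)/\arth r$ and $r\mapsto\arth\varphi_K(r)/(\arth r)^{1/K}$ --- genuinely require the analytic machinery of the distortion function (its derivative, the complementary identity for $\varphi_{1/K}$, and the endpoint asymptotics) rather than routine calculus. A secondary difficulty is confirming that $\log{\rm ch}(K\,{\rm arch}(e))$ and $v(K-1)+K$ are the sharp elementary lower and upper bounds for $A(K)$, which is what forces the particular constants $u$ and $v$.
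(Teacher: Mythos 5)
The paper does not prove this lemma at all: it is quoted verbatim from Bhayo--Vuorinen \cite[Theorem 1.10]{bv}, so there is no internal argument to compare yours with, and your proposal has to stand on its own. Step 1 is correct and standard: M\"obius normalisation plus the Hersch--Pfluger quasiconformal Schwarz lemma give $\rho(f(x),f(y))\le 2\arth\varphi_K({\rm th}\tfrac{\rho(x,y)}{2})$, and the reduction to the one-variable inequality is sound. But Step 2, which you yourself flag as the heart of the matter, is only asserted: the two monotonicity statements --- $r\mapsto\arth\varphi_K(r)/\arth r$ decreasing and $r\mapsto\arth\varphi_K(r)/(\arth r)^{1/K}$ increasing on $(0,1)$ --- are not routine consequences of the derivative formula for $\varphi_K$. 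Unwinding them through $\mu'(r)=-\pi^2/(4rr'^2\mathcal{K}(r)^2)$ and $\mu(\varphi_K(r))=\mu(r)/K$, they are equivalent to the monotonicity of $x\mathcal{K}(x)\mathcal{K}(x')/\arth x$ (decreasing) and of $x\mathcal{K}(x)^2/\arth x$ (increasing) on $(0,1)$: plausible, numerically confirmed, but genuine special-function lemmas that need a proof or a precise citation. As written, this is a gap, not a proof.

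There is also a quantitative failure in Step 3. The lower chain $K\le u(K-1)+1\le\log{\rm ch}(K\,{\rm arch}\,e)$ by convexity and the tangent line is fine, and your equivalence $\log{\rm ch}(K\,{\rm arch}\,e)\le A(K)\Leftrightarrow\varphi_K({\rm th}\tfrac12)\ge{\rm th}^2(\tfrac K2{\rm arch}\,e)$ is correct (via the Landen identity it reduces to the classical $\varphi_K({\rm th}\,s)\ge{\rm th}(Ks)$ at $s={\rm arch}\,e$, again an invoked input). But the route you propose for the upper bound cannot yield $A(K)\le v(K-1)+K$: from $\varphi_{1/K}(r')\ge4^{1-K}(r')^{K}$ with $r'=\tfrac{2\sqrt e}{e+1}$ one gets at best $A(K)\le2\arth\sqrt{1-16^{1-K}(r')^{2K}}$, whose growth in $K$ has slope $\log16-2\log r'\approx3.01$, while $v+1\approx2.3506$; since the true asymptotic slope of $A(K)$ is $2\mu(r')\approx2.3504$, the stated bound is nearly asymptotically sharp and only an essentially sharp estimate for $\varphi_K$ (equivalently for $\mu$) can produce it, so ``a routine estimate'' from the $4^{1-1/K}$-type bound provably falls short for large $K$ (and is already loose at $K=1$ if one further relaxes $2\arth\sqrt{1-x^2}$ to $\log(4/x^2)$). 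In short: either cite \cite{bv} for the whole lemma, as this paper does, or supply proofs (or precise references) for the two monotonicity lemmas in Step 2 and replace the Step 3 upper-bound estimate by a genuinely sharp one.
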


\medskip

\begin{proof}[Proof of Theorem \ref{thqcLam}]
In the same way as in the proof of Theorem \ref{Lamdd}, we still assume that $v_a=0$, $v_b$ is on $X$, $v_d$ is on $Y$ and $v_c=t e^{i\theta}$, $0<t\leq1$ and $0<\theta<\frac{\pi}{2}$ (see Figure \ref{Lamb}). Then
$$d_1=\rho(0,v_b)=\arth(Lr)\,\,\,{\rm and} \,\,\,d_2=\rho(0,v_d)=\arth(Lr'),$$
where $0<L\leq1$ and $0<r<1$.

By Lemma \ref{leqc}, we have
\begin{eqnarray*}
D_1D_2&\leq&\rho(f(0),f(v_b))\rho(f(0),f(v_d))\\
&\leq& A(K)^2\max\{\rho(0,v_b),\rho(0,v_b)^{1/K}\}\cdot\max\{\rho(0,v_d),\rho(0,v_d)^{1/K}\}\\
&=& A(K)^2\max\{d_1,d_1^{1/K}\}\cdot\max\{d_2,d_2^{1/K}\}.
\end{eqnarray*}

(1) $0<L\leq \frac{e^2-1}{e^2+1}\approx0.761594$.
This implies that $d_1<1$ and $d_2<1$. Then by Theorem \ref{Lamdd},
$$D_1D_2\leq A(K)^2(d_1d_2)^{1/K}\leq A(K)^2 \left(\arth\left(\frac{\sqrt{2}}{2}L\right)\right)^{2/K}.$$

(2) $\frac{e^2-1}{e^2+1}<L\leq1$.

{\it Case 1.} $\frac{0.761594}{L}\approx\frac 1L\frac{e^2-1}{e^2+1}= r_L<r<1$.
This implies that $d_1>1$ and $d_2<1$. Then
$$
d_1d_2^{1/K}=\arth(L r)(\arth(L r'))^{1/K}\equiv F_{L,K}(r).
$$

By logarithmic differentiation, we have
\begin{eqnarray}\label{c22}
\frac{F'_{L,K}(r)}{F_{L,K}(r)}=\frac{L r}{r'(1-(L r')^2)\arth (L r')}\left(\frac{f_L(r)}{f_L(r')}-\frac 1K\right),
\end{eqnarray}
where $f_L(r)=\frac{1-(L r')^2}{r\,\arth(L r)}$. By Lemma \ref{lecr}(1), $\frac{f_L(r)}{f_L(r')}$ is strictly decreasing from
$(r_L, 1)$ onto $(0, \frac {1}{M_L})$. Here $M_L=\frac{f_L(r'_L)}{f_L(r_L)}>1$ since $r_L\approx\frac{0.761594}{L}\geq 0.761594>0.707107\approx\frac{\sqrt 2}{2}$.

{\it Case 1.1} $1\leq K\leq M_L$.
By (\ref{c22}), we have $F'_{L,K}(r)\leq 0$ and hence $F_{L,K}(r)$ is strictly decreasing on $(r_L,1)$.
Therefore,
\beq\label{qc11}
d_1d_2^{1/K}\leq F_{L,K}(r_L)=\arth(L r_L)(\arth(L r'_L))^{1/K}.
\eeq

{\it Case 1.2} $K>M_L$.
There exists exactly one point $r_L(K)\in(r_L, 1)$ such that $\frac{f_L(r_L(K))}{f_L(\sqrt{1-r_L(K)^2})}=\frac 1K$. Then $F_{L,K}$ is strictly increasing on $(r_L, r_L(K))$ and strictly decreasing on $(r_L(K),1)$. Therefore,
\begin{eqnarray}\label{qc22}
d_1d_2^{1/K}\leq F_{L,K}(r_L(K))=\arth\left(L r_L(K)\right)\left(\arth\left(L\sqrt{1- r_L(K)^2}\right)\right)^{1/K}.
\end{eqnarray}

{\it Case 2.} $\sqrt{1-r^2_L}<r<r_L$.
This implies that  $d_1<1$ and $d_2<1$. Since $\frac{\sqrt 2}{2}\approx0.707107\in (\sqrt{1-r^2_L},r_L)$ , by Theorem \ref{Lamdd} we have
\beq\label{qc33}
(d_1d_2)^{1/K}\leq \left(\arth\left(\frac{\sqrt{2}}{2}L\right)\right)^{2/K}.
\eeq

{\it Case 3.} $0<r<\sqrt{1-r^2_L}$.
This implies that  $d_1<1$ and $d_2>1$.
Putting $p=r'$, we have $r_L<p<1$ and
\begin{eqnarray*}
d_1^{1/K} d_2=\arth(L p)(\arth(L p'))^{1/K}.
\end{eqnarray*}
Hence Case 3 is the same as Case 1.

Therefore, by (\ref{qc11}) and (\ref{qc33}), we have
if $1\leq K\leq M_L$,
$$D_1D_2\leq A(K)^2\max\left\{
\arth(L r_L)\left(\arth\left(L \sqrt{1-r^2_L}\right)\right)^{1/K}\,,\left(\arth\left(\frac{\sqrt{2}}{2}L\right)\right)^{2/K}\right\}.$$
And
by (\ref{qc22}) and (\ref{qc33}), we have
if $K> M_L$,
$$D_1D_2\leq A(K)^2\max\left\{
\arth(L r_L(K))\left(\arth\left(L\sqrt{1- r_L(K)^2}\right)\right)^{1/K}\,, \left(\arth\left(\frac{\sqrt{2}}{2}L\right)\right)^{2/K}\right\}.$$

This completes the proof of Theorem \ref{thqcLam}.
\end{proof}

\medskip

\begin{proof}[Proof of Corollary \ref{thqc}]
First,  let $g$ be the same as in the proof of Corollary \ref{dd}. Let $\{s e_1\}=g(J^*[a,d])\cap X$, $\{t e_2\}=g(J^*[a,b])\cap Y$ and denote
$$z_1=g^{-1}(se_1),\,\,z_2=g^{-1}(te_2),\,\,z_3=g^{-1}(-se_1),\,\,z_4=g^{-1}(-te_2).$$

Since the hyperbolic distance is  M\"obius invariant, by the proof of Corollary \ref{dd}, we get
$$d_1=\rho(z_1,z_3)=2 \arth\, r\,\,\,{\rm and} \,\,\,d_2=\rho(z_2,z_4)=2 \arth\, r',\,\,\,0<r<1.$$

Then by Lemma \ref{leqc}, we have
\begin{eqnarray*}
D_1D_2&\leq&\rho(f(z_1),f(z_3))\rho(f(z_2),f(z_4))\\
&\leq& A(K)^2\max\{\rho(z_1,z_3),\rho(z_1,z_3)^{1/K}\}\cdot\max\{\rho(z_2,z_4),\rho(z_2,z_4)^{1/K}\}\\
&=& A(K)^2\max\{d_1,d_1^{1/K}\}\cdot\max\{d_2,d_2^{1/K}\}.
\end{eqnarray*}

{\it Case 1.} $0.886819\approx \frac{2 \sqrt e}{e+1}= r_1<r<1$.
This implies that  $d_1>1$ and $d_2<1$. Then
\begin{eqnarray*}
d_1 d_2^{1/K}=2^{1+1/K} \arth\, r (\arth\, r')^{1/K}\equiv2^{1+1/K}F_{1,K}(r).
\end{eqnarray*}
Let $M_1=\frac{f_1(r'_1)}{f_1(r_1)}$, where $f_1(r)=\frac{r}{\arth r}$ . By the proof of Case 1 in Theorem \ref{thqcLam}, we have the following conclusions.

{\it Case 1.1} $1\leq K\leq M_1$.
\beq\label{qc1}
d_1d_2^{1/K}\leq 2^{1+1/K} F_{1,K}(r_1)=2^{1+1/K} \arth\,r_1(\arth\,r'_1)^{1/K}.
\eeq

{\it Case 1.2} $K>M_1$.
There exists exactly one point $r_1(K)\in(r_1, 1)$ such that $\frac{f_1(r_1(K))}{f_1(\sqrt{1-r_1(K)^2})}=\frac 1K$. Therefore,
\beq\label{qc2}
d_1 d_2^{1/K}\leq 2^{1+1/K} F_{1,K}(r_1(K))= 2^{1+1/K}\arth\left(r_1(K)\right) \left(\arth\left(\sqrt{1-r_1(K)^2}\right)\right)^{1/K}.
\eeq

{\it Case 2.} $0.462117\approx\frac{e-1}{e+1}=\sqrt{1-r^2_1}<r<r_1\approx0.886819$.
This implies that  $d_1>1$ and $d_2>1$.
Since $\frac{\sqrt 2}{2}\approx0.707107\in (\sqrt{1-r^2_1},r_1)$ , by Corollary \ref{dd} we have
\begin{eqnarray}\label{qc3}
d_1d_2\leq\left(2 \log(\sqrt{2}+1)\right)^2.
\end{eqnarray}

{\it Case 3.} $0<r<\sqrt{1-r^2_1}\approx 0.462117$.
This implies that  $d_1<1$ and $d_2>1$. Putting $p=r'$, we have $r_1<p<1$ and
\begin{eqnarray*}
d_1^{1/K} d_2=2^{1+1/K} \arth\,p (\arth\, p')^{1/K}.
\end{eqnarray*}
Hence Case 3 is the same as Case 2.

Therefore, by (\ref{qc1}) and (\ref{qc3}), we have
if $1\leq K\leq M_1$,
$$D_1D_2\leq  A(K)^2\max\left\{2^{1+1/K} \arth\,r_1(\arth\,r'_1)^{1/K}, \left(2\log(\sqrt{2}+1)\right)^2\right\}.$$
And
by (\ref{qc2}) and (\ref{qc3}), we have
if $K> M_1$,
$$D_1D_2\leq A(K)^2\max\left\{2^{1+1/K}\arth\left(r_1(K)\right) \left(\arth\left(\sqrt{1-r_1(K)^2}\right)\right)^{1/K}, \left(2\log(\sqrt{2}+1)\right)^2\right\}.$$

This completes the proof of Corollary \ref{thqc}.
\end{proof}

\medskip

\subsection*{Acknowledgments}
The research of Matti Vuorinen was supported by the Academy of Finland,
Project 2600066611. The research of Gendi Wang was supported by CIMO
of Finland, Grant TM-10-7364.

\end{document}